\newtheorem{thm}{Theorem}[section]
\newtheorem{lemma}[thm]{Lemma}
\newtheorem{proposition}[thm]{Proposition}
\newtheorem{corollary}[thm]{Corollary}
\newtheorem{claim}[thm]{Claim}
\newtheorem{conjecture}[thm]{Conjecture}
\theoremstyle{definition}
\newtheorem{definition}[thm]{Definition}
\newtheorem{remark}[thm]{Remark}
\newtheorem{notation}[thm]{Notation}
\newtheorem{example}[thm]{Example}
\newcommand{\pr}{\mathbb{P}}
\newcommand{\Z}{\mathbb{Z}}
\newcommand{\Q}{\mathbb{Q}}
\newcommand{\C}{\mathbb{C}}
\newcommand{\NC}{\operatorname{N}_1}
\newcommand{\ND}{\operatorname{N}^1}
\newcommand{\NE}{\operatorname{NE}}
\newcommand{\Exc}{\operatorname{Exc}}
\newcommand{\Sing}{\operatorname{Sing}}
\newcommand{\Pic}{\operatorname{Pic}}
\newcommand{\rank}{\operatorname{rank}}
\newcommand{\sO}{\mathcal{O}}
\newcommand{\sN}{\mathcal{N}}
\newcommand{\sE}{\mathcal{E}}
\newcommand{\sX}{\mathcal{X}}
\newcommand{\sL}{\mathcal{L}}
\newcommand{\sH}{\mathcal{H}}
\newcommand{\M}{\operatorname{M}}
\newcommand{\GM}{\operatorname{GM}}
\newcommand{\LM}{\operatorname{LM}}
\newcommand{\LGM}{\operatorname{LGM}}
\title{The Mukai conjecture for log Fano manifolds}
\author{Kento Fujita}
\begin{document}
\maketitle
\begin{abstract}{\noindent For a log Fano manifold $(X, D)$ with $D\neq 0$ and with 
the log Fano pseudoindex $\geq 2$, we prove that the restriction homomorphism 
$\Pic(X)\rightarrow\Pic(D_1)$ of Picard groups is injective for any irreducible 
component $D_1\subset D$.
The strategy of our proof is to run a certain minimal model program 
and is similar to the argument of Casagrande's one. 
As a corollary, we prove that the Mukai conjecture 
(resp.\ the generalized Mukai conjecture) 
implies the log Mukai conjecture (resp.\ the log generalized Mukai conjecture). }
\end{abstract}

\section{Introduction}\label{introsection}

In the previous paper \cite{logfano}, we considered a \emph{log Fano manifold} $(X, D)$, 
that is, $X$ is a smooth projective variety and $D$ is a reduced 
simple normal crossing divisor on $X$ with $-(K_X+D)$ ample. 
We got several classification results in \cite{logfano}, especially the result 
related to the \emph{Mukai conjecture} (see \cite{mukaiconj}) 
and the \emph{generalized Mukai conjecture} (see \cite{BCDD}). 

\begin{conjecture}[Mukai conjecture ($\M^n_\rho$)]\label{Mconj}
Fix $n$, $\rho\in\Z_{>0}$. 
Let $X$ be an $n$-dimensional Fano manifold with the Fano index $r$ which satisfies 
that $\rho(X)\geq\rho$ and $r\geq (n+\rho)/\rho$. Then $\rho(X)=\rho$, 
$r=(n+\rho)/\rho$ and 
$X\simeq(\pr^{r-1})^\rho$ holds. 
\end{conjecture}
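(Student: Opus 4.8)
The plan is to reformulate $\M^n_\rho$ as a single sharp inequality together with its equality case. Writing $-K_X=rH$ for a primitive ample class $H$, every rational curve $C$ has $-K_X\cdot C=r(H\cdot C)\geq r$, so the pseudoindex satisfies $i_X\geq r$. The statement $\M^n_\rho$ is then equivalent to the bound
\[ \rho(X)(r-1)\leq n, \]
together with the characterization that equality forces $X\simeq(\pr^{r-1})^{\rho(X)}$. Granting this, the hypotheses $\rho(X)\geq\rho$ and $r\geq(n+\rho)/\rho$ (that is, $\rho(r-1)\geq n$) squeeze $n\geq\rho(X)(r-1)\geq\rho(r-1)\geq n$, collapsing every inequality to an equality and pinning down $X$.

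First I would develop the Mori theory of $X$. Each extremal ray $R\subset\NE(X)$ has length $l(R)\geq i_X\geq r$, and for the contraction of $R$ the Ionescu--Wi\'sniewski inequality bounds a nontrivial fiber $F$ by $\dim F\geq l(R)-1\geq r-1$ in the fiber-type case, and by $\dim\Exc+\dim F\geq\dim X+r-1$ in the birational case. Choosing $\rho(X)$ numerically independent extremal rays, one tries to add up these fiber-dimension estimates to produce $\rho(X)(r-1)\leq n$; the subtlety is that the fibers must sweep out numerically independent directions, which one controls by tracking how the Picard number drops under each contraction.

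For the equality case I would run a minimal model program and induct on $\rho(X)$, in the spirit of Casagrande's argument. When $\rho(X)(r-1)=n$ every estimate above is tight, which should force each extremal contraction to be a smooth $\pr^{r-1}$-fibration (of fiber type, never birational) with genuine projective-space fibers. Picking one such $\varphi\colon X\to Y$, I would verify that $Y$ is again Fano of index $r$, with Picard number $\rho(X)-1$ and dimension $n-(r-1)$, so that the equality descends to $Y$; by induction $Y\simeq(\pr^{r-1})^{\rho(X)-1}$, after which one trivializes the $\pr^{r-1}$-bundle over the product base to split off the final factor.

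The hard part---indeed the reason $\M^n_\rho$ remains open in general---is precisely the control of extremal contractions in the equality case: ruling out birational contractions, upgrading fiber-type contractions to globally smooth fibrations with honest $\pr^{r-1}$-fibers, and ensuring the numerical equality is inherited by $Y$ without loss. These steps are known to succeed only in restricted ranges (large index by Wi\'sniewski, small $\rho$, or the toric case by Casagrande), and extracting the global product decomposition from the purely numerical bound is the crux that no argument currently settles for all $n$ and $\rho$.
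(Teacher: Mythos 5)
This statement is labeled a \emph{conjecture} in the paper, and the paper never proves it: Conjecture $\M^n_\rho$ is only used as a \emph{hypothesis} (Theorem \ref{mukai_log} shows that $\M^n_\rho$ and $\LM^n_\rho$ together imply $\LM^{n+1}_\rho$), and Remark \ref{intrormk} records that the known cases ($n\leq 5$ by \cite{ACO}, $\rho\leq 3$ by \cite{NO}) are results of other authors, cited without proof. So there is no ``paper's own proof'' to match your attempt against; the relevant comparison is whether your argument actually closes the conjecture, and it does not.

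Your reformulation is fine: writing $-K_X=rH$ and checking that the hypotheses $\rho(X)\geq\rho$, $r\geq(n+\rho)/\rho$ squeeze $n\geq\rho(X)(r-1)\geq\rho(r-1)\geq n$ correctly reduces $\M^n_\rho$ to the inequality $\rho(X)(r-1)\leq n$ together with its equality case. But everything after that is a description of a strategy, not a proof, and you say so yourself in the final paragraph. Concretely, two gaps remain open: (a) the inequality itself --- the Wi\'sniewski bound $\dim F\geq l(R)-1$ for a single contraction does not sum over $\rho(X)$ independent rays without a mechanism forcing the corresponding fibers to contribute independent directions in $\NC(X)$, and no such mechanism is supplied (this is exactly where the general case stalls); and (b) the equality case --- you would need to exclude birational contractions, show each fiber-type contraction is a smooth $\pr^{r-1}$-bundle, and verify that the numerical equality descends to the base $Y$, none of which is argued. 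Since you explicitly concede these are ``the crux that no argument currently settles,'' the proposal is an (accurate) account of why $\M^n_\rho$ is open rather than a proof of it; within this paper the honest status of the statement is a conjecture, and any use of it (as in Theorem \ref{mukai_log}) must remain conditional.
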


\begin{conjecture}[generalized Mukai conjecture ($\GM^n_\rho$)]\label{GMconj}
Fix $n$, $\rho\in\Z_{>0}$. 
Let $X$ be an $n$-dimensional Fano manifold with the Fano pseudoindex $\iota$ 
which satisfies 
that $\rho(X)\geq\rho$ and $\iota\geq (n+\rho)/\rho$. Then $\rho(X)=\rho$, $\iota=(n+\rho)/\rho$ and 
$X\simeq(\pr^{\iota-1})^\rho$ holds. 
\end{conjecture}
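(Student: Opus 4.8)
The plan is to reduce the conjecture to a single universal numerical inequality together with a rigidity analysis of its equality case. Write $\iota$ for the Fano pseudoindex and $\rho(X)$ for the Picard number. The hypothesis $\iota\ge(n+\rho)/\rho$ reads $\iota-1\ge n/\rho$, so together with $\rho(X)\ge\rho$ it gives $\rho(X)(\iota-1)\ge n$. Hence everything follows once one proves the \emph{universal inequality}
\[
\rho(X)(\iota-1)\le n
\]
for every Fano manifold $X$: indeed $\rho(X)(\iota-1)\le n$ and $\iota-1\ge n/\rho$ force $\rho(X)\le\rho$, whence $\rho(X)=\rho$, the displayed inequality becomes an equality, and $\iota=(n+\rho)/\rho$. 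The classification $X\simeq(\pr^{\iota-1})^\rho$ is then exactly the equality case of the universal inequality.

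To prove $\rho(X)(\iota-1)\le n$ I would argue with families of rational curves. By definition of the pseudoindex every rational curve $C$ satisfies $-K_X\cdot C\ge\iota$. The geometric input is that for an unsplit family $V$ of rational curves and a general point $x$ of its locus, the subvariety $\Locus(V_x)$ swept out by the curves of $V$ through $x$ has dimension at least $-K_X\cdot V-1\ge\iota-1$. The combinatorial engine, in the circle of ideas of Wi\'sniewski and Bonavero--Casagrande--Debarre--Druel and made precise by Andreatta--Chierici--Occhetta, is that if $V^1,\dots,V^k$ are unsplit families whose numerical classes are linearly independent in $\NC(X)$, then the iterated locus through a general point satisfies
\[
\dim\Locus(V^1,\dots,V^k)_x\ \ge\ \sum_{i=1}^k\bigl(-K_X\cdot V^i-1\bigr)\ \ge\ k(\iota-1).
\]
Since this locus lies in $X$, one obtains $k(\iota-1)\le n$. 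Thus the universal inequality reduces to exhibiting $\rho(X)$ unsplit families whose classes form a basis of $\NC(X)$.

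The main obstacle is precisely this last reduction: there is no general reason for $\NC(X)$ to be spanned by classes of unsplit families, and the non-unsplit contractions are hard to control. My strategy would be to run a minimal model program and induct on $\rho(X)$, in the spirit of Casagrande's argument referenced in the abstract. An elementary contraction $\pi\colon X\to Y$ gives $\rho(Y)=\rho(X)-1$. When $\pi$ is of fiber type, a general fiber $F$ contains rational curves of anticanonical degree $\ge\iota$, so $\dim F\ge\iota-1$; granting that $Y$ is again Fano with pseudoindex $\ge\iota$, the inductive bound $\rho(Y)(\iota-1)\le\dim Y\le n-(\iota-1)$ rearranges at once to $\rho(X)(\iota-1)\le n$. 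The genuine difficulties are that the target of a contraction of a Fano manifold need not be smooth and its pseudoindex may drop, and, more seriously, that the birational (divisorial and small) contractions do not close the induction so cleanly. Handling these cases uniformly is the heart of the matter and is exactly why the conjecture remains open in general.

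For the equality case $\rho(X)(\iota-1)=n$ every inequality above must be sharp: the iterated locus fills all of $X$, each relevant family has locus of dimension exactly $\iota-1$, and the associated rationally connected fibrations are equidimensional. One then identifies each factor with $\pr^{\iota-1}$ by the characterization of projective space through its pseudoindex (Cho--Miyaoka--Shepherd-Barron and its refinements), and assembles the $\rho$ fibrations into an isomorphism $X\simeq(\pr^{\iota-1})^\rho$. Checking that the factors split as a genuine direct product, rather than combining into a nontrivial fibration, is the final delicate point.
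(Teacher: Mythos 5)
There is a genuine gap here, and in fact there has to be: the statement you are trying to prove is Conjecture \ref{GMconj}, which the paper never proves and never claims to prove. It is stated as an open conjecture, known only for $n\leq 5$ (Andreatta--Chierici--Occhetta) or $\rho\leq 3$ (Novelli--Occhetta), as recorded in Remark \ref{intrormk}; the paper's actual contribution (Theorem \ref{mukai_log}) is the \emph{implication} that Conjectures $\GM^n_\rho$ and $\LGM^n_\rho$ together yield $\LGM^{n+1}_\rho$. So there is no proof in the paper to compare yours against, and any complete ``proof'' would be a major new theorem.

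On the substance of your sketch: your opening reduction to the universal inequality $\rho(X)(\iota-1)\leq n$ plus its equality case is not really a reduction --- it is the standard equivalent reformulation of the generalized Mukai conjecture (this is how Bonavero--Casagrande--Debarre--Druel state it), so the entire difficulty is repackaged, not removed. The locus estimate $\dim\Locus(V^1,\dots,V^k)_x\geq\sum_i(-K_X\cdot V^i-1)$ for numerically independent unsplit families is correct and is indeed the engine of all known partial results, but, as you yourself concede, there is no general way to produce $\rho(X)$ unsplit families spanning $\NC(X)$, and your MMP induction breaks exactly where you flag it: fiber-type contractions of Fano manifolds can have singular, non-Fano targets with smaller pseudoindex, and the divisorial and small cases do not close the induction at all. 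Since you cannot cite a result that fills these holes (none exists --- that is why the conjecture is open), the proposal is an accurate survey of the state of the art rather than a proof. Note also a small slip in your final paragraph: in the equality case the known characterizations (Cho--Miyaoka--Shepherd-Barron type) identify fibers of the rationally connected fibrations with $\pr^{\iota-1}$ only after one already controls equidimensionality and smoothness of those fibrations, which is itself unestablished; assembling the $\rho$ fibrations into a direct product is, as you say, delicate, and is only carried out in the literature under the extra hypotheses of the $n\leq 5$ or $\rho\leq 3$ cases.
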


We proved a special version of the log versions of the Mukai conjecture and 
the generalized Mukai conjecture in \cite[Theorem 4.3]{logfano}; we call them the 
\emph{log Mukai conjecture} and the 
\emph{log generalized Mukai conjecture} respectively. 
(In \cite[Theorem 4.3]{logfano}, we proved Conjecture $\LGM_2^n$.)

\begin{conjecture}[log Mukai conjecture ($\LM^n_\rho$)]\label{LMconj}
Fix $n$, $\rho\geq 2$. 
Let $(X, D)$ be an $n$-dimensional log Fano manifold with the log Fano index $r$ 
and $D\neq 0$ which satisfies that 
$\rho(X)\geq\rho$ and $r\geq (n+\rho-1)/\rho$. Then $\rho(X)=\rho$, 
$r=(n-1+\rho)/\rho$ and 
$(X, D)$ is isomorphic to the case of Type $(\rho, r; m_1,\dots,m_{\rho-1})$ 
with $m_1\dots,m_{\rho-1}\geq 0$ in Example \ref{lm}. 
\end{conjecture}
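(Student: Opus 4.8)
The plan is to prove $\LM^n_\rho$ conditionally, assuming the Mukai conjecture $\M^{n-1}_\rho$ (as announced in the abstract) together with the injectivity of $\Pic(X)\to\Pic(D_1)$. The idea is to restrict the log Fano structure to a boundary component $D_1$, turning it into a genuine Fano manifold to which $\M^{n-1}_\rho$ applies, and then to transport the resulting product structure back to $(X,D)$. Before doing so I would verify the hypothesis of the injectivity theorem: since $\rho\geq 2$ forces $\rho(X)\geq 2$ and hence $n\geq 2$, the bound $r\geq(n-1+\rho)/\rho>1$ makes the log Fano index $r\geq 2$; writing $-(K_X+D)\sim rH$ with $H$ ample and Cartier, every curve $C$ has $-(K_X+D)\cdot C=r(H\cdot C)\geq r\geq 2$, so the log Fano pseudoindex is $\geq 2$ and the injectivity theorem applies to each component $D_1\subset D$.

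Fix such a $D_1$. By adjunction $K_{D_1}=(K_X+D_1)|_{D_1}$, and since $-(K_X+D)|_{D_1}=rH|_{D_1}$ is ample while $(D-D_1)|_{D_1}$ is effective, $D_1$ is a smooth Fano manifold of dimension $n-1$; indeed $(D_1,(D-D_1)|_{D_1})$ is itself log Fano with log Fano index $\geq r$. The injectivity of $\Pic(X)\to\Pic(D_1)$ yields $\rho(X)\leq\rho(D_1)$, so $\rho(D_1)\geq\rho$.

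Next I would control the Fano index of $D_1$, the first delicate point. From $-K_{D_1}=rH|_{D_1}+(D-D_1)|_{D_1}$ one checks, using a general minimal rational curve not contained in $\mathrm{Supp}(D-D_1)$ (curves lying in the other boundary components requiring a short separate adjunction argument), that the Fano pseudoindex of $D_1$ is $\geq r\geq((n-1)+\rho)/\rho$. Upgrading this to the Fano index, so that $\M^{n-1}_\rho$ and not merely its generalized version applies, is the main obstacle: $(D-D_1)|_{D_1}$ need not be divisible by $r$, so the divisibility of $-K_{D_1}$ by $r$ must be extracted from the combined class $rH|_{D_1}+(D-D_1)|_{D_1}$, using the identification of $\Pic(X)$ with a sublattice of $\Pic(D_1)$, or by inducting on the number of boundary components through the lower-dimensional pair $(D_1,(D-D_1)|_{D_1})$, which satisfies the hypotheses of $\LM^{n-1}_\rho$. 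Granting this, $\M^{n-1}_\rho$ forces $\rho(D_1)=\rho$, Fano index $((n-1)+\rho)/\rho$, and $D_1\cong(\pr^{r-1})^\rho$. The sandwich $\rho\leq\rho(X)\leq\rho(D_1)=\rho$ gives $\rho(X)=\rho$, and since the index of $D_1$ is both $\geq r$ and equal to $((n-1)+\rho)/\rho$, the hypothesis $r\geq(n-1+\rho)/\rho$ forces $r=(n-1+\rho)/\rho$.

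It remains to recover $(X,D)$, which I expect to be the most laborious step. Because $\Pic(X)\hookrightarrow\Pic(D_1)=\Z^\rho$ is now an inclusion of lattices of equal rank $\rho$, I would lift the $\rho$ projections $D_1\to\pr^{r-1}$ to extremal contractions of $X$, identify $X$ with the asserted product of projective spaces, and then locate $D$ as the corresponding hyperplane-type divisor, matching Type $(\rho,r;m_1,\dots,m_{\rho-1})$ of Example \ref{lm}. Extending the product structure from the divisor $D_1$ to the ambient $X$ and pinning down the boundary $D$ precisely is where the bulk of the remaining bookkeeping lies.
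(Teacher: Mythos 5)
The statement you were asked to prove is a conjecture; what the paper actually proves is the conditional implication (Theorem \ref{mukai_log}) that $\M^n_\rho$ and $\LM^n_\rho$ together imply $\LM^{n+1}_\rho$, and your proposal has exactly that conditional shape (re-indexed by one dimension), with the same opening moves: pseudoindex $\geq 2$, injectivity of $\Pic(X)\rightarrow\Pic(D_1)$ from Corollary \ref{piccor}, and the lower-dimensional Mukai conjecture applied on the boundary. The first genuine gap is the step you yourself call ``the main obstacle'': you never resolve how $\M^{n-1}_\rho$, rather than its generalized version, becomes applicable, and the two escape routes you offer are left unexecuted --- moreover you have the roles reversed. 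In the paper's argument the induction hypothesis $\LM^{n-1}_\rho$ is not used to extract divisibility of $-K_{D_1}$ from the class $rH|_{D_1}+(D-D_1)|_{D_1}$; it is used to prove that $E_1:=(D-D_1)|_{D_1}$ vanishes. By adjunction $-(K_{D_1}+E_1)=-(K_X+D)|_{D_1}\sim rH|_{D_1}$, so the log Fano index of the pair $(D_1,E_1)$ is divisible by $r$, hence at least $r\geq(n+\rho-1)/\rho>(n+\rho-2)/\rho$; this last quantity is simultaneously the hypothesis threshold and the forced value of the index in Conjecture $\LM^{n-1}_\rho$, so if $E_1\neq 0$ that conjecture applies (its Picard-number hypothesis holds since $\rho(D_1)\geq\rho(X)\geq\rho$ by Corollary \ref{piccor}) and its conclusion contradicts the bound. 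Hence $E_1=0$, $D=D_1$ is irreducible because $D$ is connected (Theorem \ref{fujino_thm}), and $-K_D\sim rH|_D$ is divisible by $r$ automatically: the lattice-extraction problem you pose never arises. (Incidentally, your claim that $D_1$ is Fano because $-K_{D_1}$ is ample plus effective is also unjustified --- ample plus effective need not be ample --- but this is harmless once $E_1=0$ is established first.)

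The second, more serious gap is that your reconstruction step aims at the wrong target. You plan to ``identify $X$ with the asserted product of projective spaces'' and to locate $D$ as a ``hyperplane-type divisor''; but the conjectural conclusion (Example \ref{lm}) is that $X\simeq\pr_{(\pr^{r-1})^{\rho-1}}(\sO^{\oplus r}\oplus\sO(m_1,\dots,m_{\rho-1}))$, which is a product of projective spaces only when $m_1=\dots=m_{\rho-1}=0$, and that $D$ is the subbundle $\pr_{(\pr^{r-1})^{\rho-1}}(\sO^{\oplus r})$, not a hyperplane-type divisor. So lifting the $\rho$ projections of $D\simeq(\pr^{r-1})^\rho$ to contractions of $X$ cannot produce a product splitting, and the plan as stated would fail whenever some $m_i>0$. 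What the paper does instead is the technical core of the whole proof: it runs the $(-D)$-MMP of Section \ref{run}; Proposition \ref{fund_prop} (2) and (3), combined with the fact that any contraction of $(\pr^{r-1})^\rho$ which contracts a curve drops dimension, forces the MMP to stop at once ($k=0$) with a fiber-type contraction $\pi_0\colon X\rightarrow Y^0$ where $Y^0\simeq(\pr^{r-1})^{\rho-1}$ and $\pi_0|_D$ is the projection $p_{1,\dots,\rho-1}$; Wi\'sniewski's inequality then pins down $(-K_X\cdot C)=r+1$, $(D\cdot C)=1$ and that every fiber has dimension $r$, so $\pi_0$ is a $\pr^r$-bundle with $D$ a $\pr^{r-1}$-subbundle; finally Lemma \ref{technical} (a normal-bundle computation plus the splitting of the pushforward sequence for $\sO_X(D)$) identifies the pair with Type $(\rho,r;m_1,\dots,m_{\rho-1})$. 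Nothing in your sketch substitutes for this chain, so at best your argument yields the numerical conclusions $\rho(X)=\rho$ and $r=(n-1+\rho)/\rho$, not the classification of $(X,D)$.
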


\begin{conjecture}[log generalized Mukai conjecture ($\LGM^n_\rho$)]\label{LGMconj}
Fix $n$, $\rho\geq 2$. 
Let $(X, D)$ be an $n$-dimensional log Fano manifold with the log Fano 
pseusoindex $\iota$ and $D\neq 0$ which satisfies that 
$\rho(X)\geq\rho$ and $\iota\geq (n+\rho-1)/\rho$. 
Then $\rho(X)=\rho$, $\iota=(n-1+\rho)/\rho$ and 
$(X, D)$ is isomorphic to the case of Type $(\rho, \iota; m_1,\dots,m_{\rho-1})$ 
with $m_1\dots,m_{\rho-1}\geq 0$ in Example \ref{lm}. 
\end{conjecture}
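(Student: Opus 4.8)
The plan is to deduce $\LGM^n_\rho$ from the generalized Mukai conjecture in lower dimension, arguing by induction on $n$ and using the main injectivity result of the paper as the one essential geometric input; this is exactly the implication recorded as a corollary in the abstract. We may assume $n\geq 2$, since otherwise $X$ is a curve and the statement is vacuous. Note first that the hypothesis $\iota\geq (n+\rho-1)/\rho = 1+(n-1)/\rho$ forces the integer $\iota$ to be at least $2$ whenever $n\geq 2$ and $\rho\geq 2$, so the log Fano pseudoindex is $\geq 2$ and the injectivity theorem $\Pic(X)\hookrightarrow\Pic(D_1)$ is available.

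I would then fix an irreducible component $D_1\subset D$ and pass to it by adjunction: since $-(K_{D_1}+(D-D_1)|_{D_1})=-(K_X+D)|_{D_1}$ is ample, the pair $(D_1,(D-D_1)|_{D_1})$ is again a log Fano manifold, now of dimension $n-1$. Every rational curve $C\subset D_1$ satisfies $-(K_{D_1}+(D-D_1)|_{D_1})\cdot C=-(K_X+D)\cdot C\geq\iota$, so the log Fano pseudoindex $\iota'$ of this restricted pair obeys $\iota'\geq\iota\geq ((n-1)+\rho)/\rho$. At the same time, injectivity of $\Pic(X)\to\Pic(D_1)$ gives $\rho(X)=\rank\Pic(X)\leq\rank\Pic(D_1)=\rho(D_1)$, whence $\rho(D_1)\geq\rho$.

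Now split into two cases. If $D$ is irreducible, then $(D-D_1)|_{D_1}=0$ and $D_1$ is an honest $(n-1)$-dimensional Fano manifold with pseudoindex $\iota'\geq((n-1)+\rho)/\rho$ and $\rho(D_1)\geq\rho$; the assumed $\GM^{n-1}_\rho$ then yields $\rho(D_1)=\rho$, $\iota'=((n-1)+\rho)/\rho$, and $D_1\simeq(\pr^{\iota'-1})^\rho$. If instead $D$ is reducible, then $(D_1,(D-D_1)|_{D_1})$ carries a nonzero boundary, and the inductive hypothesis $\LGM^{n-1}_\rho$ pins down its structure as a Type in Example \ref{lm}. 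In either case the chain $\rho\leq\rho(X)\leq\rho(D_1)=\rho$ forces $\rho(X)=\rho$, and $\iota'\geq\iota\geq((n-1)+\rho)/\rho=\iota'$ forces $\iota=((n-1)+\rho)/\rho$; this already delivers all the numerical assertions of the conjecture.

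What remains is to reconstruct $(X,D)$ itself, and this is where I expect the real difficulty to lie. Since $\Pic(X)\hookrightarrow\Pic(D_1)$ is now an injection of free abelian groups of equal rank $\rho$, it identifies $\ND(X)$ with $\ND(D_1)$ and transports the simplicial nef cone of $(\pr^{\iota-1})^\rho$ (or the analogous cone obtained inductively in the reducible case) back onto $X$. I would use this to produce $\rho$ extremal contractions of $X$ exhibiting it as an iterated projective-space bundle, and then locate the components of $D$ as the distinguished hyperplane/section subvarieties, thereby matching $(X,D)$ with the appropriate Type $(\rho,\iota;m_1,\dots,m_{\rho-1})$. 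The main obstacle is precisely this transport of cone and contraction data across the restriction map: verifying that the extremal rays of $\NE(X)$ assemble into the product/bundle structure and that $D$ sits correctly is where the minimal model program analysis in the style of Casagrande carries the weight, whereas adjunction and the injectivity theorem supply the numerical equalities essentially for free.
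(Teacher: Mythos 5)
Your overall frame---induction on $n$ through the injectivity corollary, adjunction to an irreducible component, and the lower-dimensional conjectures---is the same as the paper's (this implication is exactly Theorem \ref{mukai_log}, with indices shifted by one), and your numerical conclusions in the irreducible case are correct. However, you mishandle the reducible case. If $E_1:=(D-D_1)|_{D_1}\neq 0$, then Conjecture $\LGM^{n-1}_\rho$ applied to the $(n-1)$-dimensional pair $(D_1,E_1)$ does not ``pin down its structure'' compatibly with your hypotheses: its conclusion forces the pseudoindex of $(D_1,E_1)$ to equal $((n-1)+\rho-1)/\rho=(n+\rho-2)/\rho$, whereas you have already shown this pseudoindex is $\geq\iota\geq(n+\rho-1)/\rho$. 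That is a contradiction, so the reducible case is vacuous and $D$ must be irreducible; this is precisely how the paper uses the lower-dimensional log conjecture, namely to kill the conductor and conclude $D\simeq(\pr^{\iota-1})^\rho$. Your assertion that $\iota'=((n-1)+\rho)/\rho$ holds ``in either case'' is false in the reducible one, and your plan to reconstruct $(X,D)$ from a boundary-carrying Type is a plan to reconstruct a configuration that cannot exist.

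The more serious gap is the step you explicitly defer: identifying $(X,D)$ with Type $(\rho,\iota;m_1,\dots,m_{\rho-1})$. The route you sketch would not work as stated. The injection $\Pic(X)\hookrightarrow\Pic(D)$ between lattices of equal rank $\rho$ identifies $\ND(X)$ with $\ND(D)$ as vector spaces, but it does not transport cones or contractions: restriction maps $\Nef(X)$ \emph{into} $\Nef(D)$, and a class on $X$ whose restriction to $D\simeq(\pr^{\iota-1})^\rho$ is nef need not be nef on $X$; likewise the projections of $D$ have no a priori extensions to morphisms of $X$. Asserting that the simplicial cone of $D$ pulls back to $\Nef(X)$ presupposes exactly the structure to be proven. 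The paper obtains the bundle structure by a different mechanism: it runs the $(-D)$-MMP of Notation \ref{logfano_mmp}; Proposition \ref{fund_prop} (2),(3) show the first contraction restricts on $D\simeq(\pr^{\iota-1})^\rho$ to a non-finite algebraic fiber space, which forces $k=0$, so the very first step $\pi_0\colon X\rightarrow Y^0$ is already of fiber type, with $Y^0\simeq(\pr^{\iota-1})^{\rho-1}$ and $\pi_0|_D$ a projection; Wi\'sniewski's inequality applied to a minimal rational curve $C$ of $R^0$ then yields $(-K_X\cdot C)=\iota+1$, $(D\cdot C)=1$ and that every fiber has dimension $\iota$, whence $\pi_0$ is a $\pr^\iota$-bundle and $D$ a $\pr^{\iota-1}$-subbundle; finally Lemma \ref{technical} (a normal-bundle computation together with the splitting of $0\rightarrow\sO\rightarrow(\pi_0)_*\sO_X(D)\rightarrow(\pi_0|_D)_*\sN_{D/X}\rightarrow 0$) pins down the pair as Type $(\rho,\iota;m_1,\dots,m_{\rho-1})$. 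Without this MMP analysis, or a genuine substitute for it, your argument establishes only the numerical assertions of the conjecture, not the classification.
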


\begin{remark}\label{intrormk}
Clearly, Conjecture $\GM^n_\rho$ (resp.\ Conjecture $\LGM^n_\rho$) implies 
Conjecture $\M^n_\rho$ 
(resp.\ Conjecture $\LM^n_\rho$) (see Remark \ref{pseudo_rmk}).
We also note that Conjecture $\GM^n_\rho$ is true if $n\leq 5$ (\cite{ACO}) or 
$\rho\leq 3$ (\cite{NO}). 
In \cite[Proposition 4.1]{logfano}, we also showed that Conjecture $\LGM^2_\rho$ 
is true (see also \cite[\S 3]{Maeda}). 
\end{remark}

In this article, we obtain a fundamental property to compare the Picard number 
of $X$ and $D$ for a log Fano manifold $(X, D)$. 

\begin{thm}[= Theorem \ref{mainthm}]\label{mainthm_intro}
Let $(X, D)$ be an $n$-dimensional log Fano manifold with $D\neq 0$. 
Then one of the following holds: 
\begin{enumerate}
\renewcommand{\theenumi}{\arabic{enumi}}
\renewcommand{\labelenumi}{(\theenumi)}
\item
The restriction homomorphism $\Pic(X)\rightarrow\Pic(D)$ is injective. 
\item
$X$ admits a $\pr^1$-bundle structure $\pi\colon X\rightarrow Y$ and $D$ is a section 
of $\pi$. In particular, $D$ is irreducible and isomorphic to $Y$ $($hence $Y$ is an 
$(n-1)$-dimensional Fano manifold$)$. 
\end{enumerate}
\end{thm}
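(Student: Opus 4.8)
The plan is to assume that alternative (1) fails and to deduce (2) by running a minimal model program on the pair $(X,D)$, in the spirit of Casagrande. Since $X$ is smooth and $D$ is a reduced simple normal crossing divisor, the pair $(X,D)$ is dlt and $K_X+D$ is Cartier; as $-(K_X+D)$ is ample, the Cone and Contraction Theorems apply, so $\overline{\NE}(X)$ is rational polyhedral, generated by finitely many $(K_X+D)$-negative extremal rays, each admitting a contraction. Using $H^1(X,\sO_X)=H^2(X,\sO_X)=0$ (Kawamata--Viehweg vanishing, since $-(K_X+D)$ is ample), I would first reduce to a numerical statement: a class $\xi\in\ND(X)$ lies in the kernel of $\ND(X)\to\ND(D)$ exactly when $\xi\cdot C=0$ for every curve $C\subset D$, so this kernel is the annihilator of $V:=\mathrm{Im}(\NC(D)\to\NC(X))$. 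Thus non-injectivity means $V\subsetneq\NC(X)$, and since the extremal rays span $\NC(X)$ while $V$ is proper, some extremal ray satisfies $R\not\subset V$.

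For any ray with $R\not\subset V$ one has $D\cdot R\geq 0$: if $D\cdot R<0$, every curve generating $R$ would lie in $\mathrm{Supp}\,D$ and hence have class in $V$, forcing $R\subset V$. The next step is to dispose of the rays with $D\cdot R=0$, and of birational rays lying in $V$, by running the $(K_X+D)$-MMP: contracting such a ray descends $D$ to a divisor $D'$, keeps $-(K+D')$ ample and the pair log Fano, lowers $\rho$, and preserves $\mathrm{corank}(V)\geq 1$, provided the kernel class $\xi$ descends. Because $-(K+D)$ remains ample, the program terminates at a Mori fiber space, on which a ray $R\not\subset V$ is necessarily of fiber type with $D\cdot R>0$.

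On such a model, write $\varphi=\varphi_R\colon X\to Y$; then $D$ is $\varphi$-ample and $\varphi|_D$ is finite. If some fiber $F$ had $\dim F\geq 2$, then $D\cap F$ would be an ample divisor in $F$ of positive dimension, hence contain a curve $C'\subset D$ contracted by $\varphi$, so $[C']\in R\cap V$ and $R\subset V$, a contradiction; thus every fiber has dimension $1$ and the general fiber is $\pr^1$. Adjunction gives $-(K_F+D|_F)$ ample with $D|_F$ a nonzero effective divisor on $\pr^1$, so $-2+D\cdot F<0$ and $D\cdot F=1$. Moreover $\ell(R)=-K_X\cdot C=-(K_X+D)\cdot C+D\cdot C\geq 1+1=2$ for the minimal curve $C$, while the Ionescu--Wiśniewski inequality gives $\ell(R)\leq\dim F+1=2$; hence $\ell(R)=2$, and by Wiśniewski's projective-bundle theorem $\varphi$ is a $\pr^1$-bundle over a smooth variety $Y$. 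Every component of $D$ dominates $Y$ (a non-dominating component would be a divisor contained in the one-dimensional fibers, forcing $F\subset D$ and again $R\subset V$), and since $D\cdot F=1$ with each component meeting $F$ positively, $D$ is irreducible and maps isomorphically onto $Y$, i.e. is a section; adjunction then yields $-K_Y$ ample, so $Y$ is Fano of dimension $n-1$, which is exactly (2).

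The clean geometric heart is the fiber-type analysis above, once the correct ray on $X$ has been isolated. I expect the main obstacle to be the bookkeeping of the minimal model program: ensuring that the contractions of the ``bad'' rays simultaneously preserve the log Fano condition, the dlt singularities of the now possibly singular intermediate models, and the non-injectivity together with the descent of the kernel class; and, most delicately, transferring the $\pr^1$-bundle conclusion from the output model back to $X$ itself, since the program is not reversible. It is precisely this last point that forces the careful, Casagrande-style choice of which extremal rays are contracted at each stage.
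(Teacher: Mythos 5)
Your opening reduction (non-injectivity of $\Pic(X)\to\Pic(D)$ forces $V=\NC(D,X)\subsetneq\NC(X)$, via torsion-freeness and vanishing) and your analysis of the final fiber-type contraction (one-dimensional fibers, $D\cdot F=1$, length $2$) agree with the paper. But the proposal breaks down exactly at the point you flag in your last sentence, and that point is not "bookkeeping": it is the mathematical heart of the theorem. Conclusion (2) asserts a $\pr^1$-bundle structure on $X$ itself, whereas your argument produces one only on the last model of your MMP, which is in general singular (so the Fujita/Wi\'sniewski-type bundle theorems you invoke do not even apply there) and is not related to $X$ by any morphism. The paper's resolution is Proposition \ref{fund_prop} \eqref{fund_prop4}: if the final restriction $\pi_k|_{D^k}\colon D^k\to Y^k$ is finite, then necessarily $k=0$, i.e.\ the MMP never ran. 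This is proved by combining Ishii's lemma (Lemma \ref{ishii_lem}: a component of a one-dimensional fiber of a birational contraction through a Gorenstein point has anticanonical degree at most $1$) with the Casagrande-type inequality of Lemma \ref{negativity} \eqref{negativity2}, namely $(-(K_{X^j}+D^j)\cdot C)\geq(-(K_X+D)\cdot C^0)$ with strict inequality when $C$ meets $A^j$: if $k\geq 1$, then $A^k\neq\emptyset$, and a fiber component meeting $A^k$ has strict transform $l_1\subset X$ with $0<(-(K_X+D)\cdot l_1)<1$, contradicting that $-(K_X+D)$ is an ample \emph{Cartier} divisor. Without an argument of this kind you have proved a statement about some birational model of $X$, not Theorem \ref{mainthm_intro}.

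Second, the MMP you set up is incorrectly calibrated. Flipping a $(K_X+D)$-negative small ray makes $K_{X^+}+D^+$ relatively ample by definition of the flip, so $-(K_{X^+}+D^+)$ is \emph{negative} on every flipped curve; hence your claim that each step "keeps $-(K+D')$ ample and the pair log Fano" is false, which invalidates your termination argument ("because $-(K+D)$ remains ample, the program terminates at a Mori fiber space") and leaves the descent of the kernel class $\xi$, equivalently the preservation of $\rho-\dim V$, as an unproved proviso. For your choice of rays it is genuinely problematic: flipped curves of a ray with $D\cdot R=0$ need not lie in the new boundary divisor, so the image of $\NC(D')$ can change. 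The paper's fix is to contract the \emph{opposite} rays: it runs a $(-D)$-MMP (legitimate because $X$ is a Mori dream space, Theorem \ref{dream}) chosen so as to also be a $(K_X+D)$-MMP (Proposition \ref{both_mmp}). Then every contracted ray satisfies $D^j\cdot R^j>0$, so $-D^{j+1}$ is relatively ample for the flip and all flipped curves lie in $D^{j+1}$; this is what keeps $\rho(X^j)-\dim\NC(D^j,X^j)$ constant (Corollary \ref{codim_const}), forces the last contraction to be of fiber type with $D^k\cdot R^k>0$, and provides the terminal/dlt control of Lemma \ref{negativity} that the key step above requires. Your geometric endgame is right, but it must be run on this MMP and completed by the $k=0$ argument.
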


Especially, for a log Fano manifold $(X, D)$ with $D\neq 0$ and the log Fano 
pseudoindex $\geq 2$, 
we get a comparison theorem of  the Picard number of $X$ and $D_1\subset D$.

\begin{corollary}[= Corollary \ref{piccor} \eqref{piccor1}]\label{piccor_intro}
Let $(X, D)$ be a log Fano manifold with $D\neq 0$ and 
the log Fano pseudoindex $\geq 2$. Then the restriction homomorphism 
$\Pic(X)\rightarrow\Pic(D_1)$ is injective 
for any irreducible component $D_1\subset D$.
\end{corollary}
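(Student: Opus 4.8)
The plan is to deduce Corollary \ref{piccor_intro} from Theorem \ref{mainthm_intro} by ruling out alternative (2) under the pseudoindex hypothesis. The logic is: Theorem \ref{mainthm_intro} gives that either the restriction $\Pic(X)\rightarrow\Pic(D)$ is injective, or $X$ is a $\pr^1$-bundle over $Y$ with $D$ a section. I will show that the second alternative is incompatible with the log Fano pseudoindex being $\geq 2$, so that alternative (1) always holds. Then I must upgrade injectivity of $\Pic(X)\rightarrow\Pic(D)$ to injectivity of $\Pic(X)\rightarrow\Pic(D_1)$ for a single component $D_1$.

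\medskip

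First I would analyze the $\pr^1$-bundle case. Suppose $\pi\colon X\rightarrow Y$ is a $\pr^1$-bundle with $D$ a section. The key is to exhibit a rational curve whose intersection with $-(K_X+D)$ is exactly $1$, which would force the log Fano pseudoindex to be $1$, contradicting the hypothesis $\geq 2$. The natural candidate is a fiber $f$ of $\pi$, a rational curve $\pr^1$ meeting the section $D$ transversally in a single point. Then $-(K_X+D)\cdot f = -K_X\cdot f - D\cdot f$. Since $f$ is a fiber of a $\pr^1$-bundle, $-K_X\cdot f = 2$ by adjunction on the fiber, and $D\cdot f = 1$ because $D$ is a section. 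Hence $-(K_X+D)\cdot f = 2 - 1 = 1$, so the log Fano pseudoindex is at most $1$. This contradicts the assumption that it is $\geq 2$, eliminating alternative (2). Therefore alternative (1) holds and $\Pic(X)\rightarrow\Pic(D)$ is injective.

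\medskip

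Next I would pass from injectivity on $D$ to injectivity on a chosen component $D_1$. Because $D$ is a reduced simple normal crossing divisor, it decomposes into its irreducible components $D = D_1 \cup \cdots \cup D_k$, and the restriction to $D$ factors, via pullback along the inclusions, through the restriction to the disjoint union of components; concretely $\Pic(X)\rightarrow\Pic(D)\rightarrow\bigoplus_i\Pic(D_i)$. Thus it suffices to check that for the specific component $D_1$, the composite $\Pic(X)\rightarrow\Pic(D_1)$ is injective. Here I would again invoke the pseudoindex hypothesis: the pair $(X, D_1)$ is still a log Fano manifold since $D_1$ is a reduced smooth (as a component of an snc divisor) prime divisor and $-(K_X+D_1)$ is ample because $-(K_X+D)$ is ample and $D-D_1$ is effective, so $-(K_X+D_1) = -(K_X+D) + (D-D_1)$ is a sum of an ample and an effective divisor, hence ample-by-nef-plus-effective — though I should be careful here, and it is cleaner to verify that the log Fano pseudoindex of $(X, D_1)$ is still $\geq 2$. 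Applying Theorem \ref{mainthm_intro} directly to the pair $(X, D_1)$ and ruling out the $\pr^1$-bundle alternative exactly as above gives injectivity of $\Pic(X)\rightarrow\Pic(D_1)$.

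\medskip

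The main obstacle I anticipate is the transition in the last paragraph: ensuring that $(X, D_1)$ is itself a log Fano manifold with pseudoindex $\geq 2$, so that Theorem \ref{mainthm_intro} can be reapplied to the single component. The ampleness of $-(K_X+D_1)$ needs justification since $-(K_X+D_1) = -(K_X+D) + (D - D_1)$ is ample plus effective, which is not automatically ample; the correct statement is that it is big, so a more careful argument via the pseudoindex is needed. The cleanest route is to argue directly: any rational curve $C$ giving a small value of $-(K_X+D_1)\cdot C$ would, since $D\geq D_1$, give $-(K_X+D)\cdot C \leq -(K_X+D_1)\cdot C$, so the log Fano pseudoindex of $(X,D)$ bounds that of $(X,D_1)$ from below along curves meeting only $D_1$; handling this carefully — verifying the hypothesis of Theorem \ref{mainthm_intro} transfers to $(X,D_1)$ — is the delicate point, and I would expect the author to either reapply the theorem componentwise or give a direct factorization argument.
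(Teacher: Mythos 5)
Your first step is fine and agrees with the paper: in alternative (2) of Theorem \ref{mainthm}, a fiber $f$ of the $\pr^1$-bundle satisfies $(-K_X\cdot f)=2$ and $(D\cdot f)=1$, so $(-(K_X+D)\cdot f)=1$, which contradicts pseudoindex $\geq 2$; hence $\Pic(X)\rightarrow\Pic(D)$ is injective. The genuine gap is the passage from $D$ to a single component $D_1$, which you identify as ``the delicate point'' but do not close. The factorization $\Pic(X)\rightarrow\Pic(D)\rightarrow\bigoplus_i\Pic(D_i)$ buys nothing: injectivity of $\Pic(X)\rightarrow\Pic(D)$ is not inherited by the composite with the projection to one summand, so ``it suffices to check the composite is injective'' merely restates the goal. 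Your substantive idea --- apply Theorem \ref{mainthm} to the pair $(X,D_1)$ --- requires $(X,D_1)$ to be a log Fano manifold, i.e.\ $-(K_X+D_1)$ ample, and nothing in the hypotheses guarantees this: $-(K_X+D_1)=-(K_X+D)+(D-D_1)$ is ample plus effective, and on a curve $C$ contained in another component $D_j$ the term $((D-D_1)\cdot C)$ can be negative, so neither ampleness nor even your inequality $(-(K_X+D)\cdot C)\leq(-(K_X+D_1)\cdot C)$ holds for such curves. Moreover, a pseudoindex bound for $(X,D_1)$ would not suffice anyway: Theorem \ref{mainthm} rests on the MMP machinery of Proposition \ref{fund_prop}, whose proofs repeatedly use that the anti-log-canonical divisor is an \emph{ample} Cartier divisor, not merely positive on rational curves. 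So the proposal as written does not prove the statement when $D$ is reducible.

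The paper closes this gap by induction on $\dim X$, using a move you do not make: instead of shrinking the boundary on $X$ (which destroys the log Fano condition), it passes \emph{to} the boundary by adjunction, where the condition is preserved. By Theorem \ref{fujino_thm}, $D$ is an snc Fano variety with pseudoindex $\geq 2$, so each component $D_i$ together with its conductor divisor $\sum_{j\neq i}D_{ij}$ is a log Fano manifold of dimension $\dim X-1$ with pseudoindex $\geq 2$. Given $\sH\in\Pic(X)$ with $\sH|_{D_1}\simeq\sO_{D_1}$, the inductive hypothesis applied to the pairs $(D_i,\sum_{j\neq i}D_{ij})$ gives injectivity of $\Pic(D_i)\rightarrow\Pic(D_{1i})$, whence $\sH|_{D_i}\simeq\sO_{D_i}$ for every $i$; the exact sequence of Proposition \ref{fanopicglue} then glues these trivializations to $\sH|_D\simeq\sO_D$, and finally Theorem \ref{mainthm}, applied to $(X,D)$ where it is legitimate, gives $\sH\simeq\sO_X$. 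The base case $\dim X\leq 2$ is settled by the classification of two-dimensional log Fano manifolds with pseudoindex $\geq 2$ ($X\simeq\pr^2$ with $D$ a line). This induction-plus-adjunction step, together with the gluing sequence for Picard groups of snc varieties, is precisely the missing ingredient in your argument.
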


To prove Theorem \ref{mainthm_intro}, we use the result of \cite{BCHM} that 
$X$ is a \emph{Mori dream space} (see \cite{HK} for the definition) 
for a log Fano manifold $(X, D)$. 
We run a special $(-D)$-minimal model program (\emph{MMP}, for short) and compare 
the cokernel of the homomorphism 
$\NC(D)\rightarrow\NC(X)$ in each step of the MMP. We can show that 
the dimension of the cokernel is constant by using the same way of Casagrande's one 
\cite{cas1,cas2}. 

As a corollary, we can show that the Mukai Conjecture 
(resp.\ the generalized Mukai Conjecture) 
implies the log Mukai Conjecture (resp.\ the generalized log Mukai Conjecture). 

\begin{thm}[= Theorem \ref{mukai_log}]\label{mukai_log_intro}
Conjectures $\M^n_\rho$ and $\LM^n_\rho$ 
$($resp.\ Conjectures $\GM^n_\rho$ and $\LGM^n_\rho$$)$ imply 
Conjecture $\LM^{n+1}_\rho$ $($resp.\ Conjecture $\LGM^{n+1}_\rho$$)$ 
for any $n$, $\rho\geq 2$. 
\end{thm}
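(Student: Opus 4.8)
The plan is to prove the stated implication as a single inductive step in the dimension, deducing $\LM^{n+1}_\rho$ from $\M^n_\rho$ and $\LM^n_\rho$; I will describe the index version, the generalized (pseudoindex) version being entirely parallel with $r$ replaced by $\iota$ and $(\M^n_\rho,\LM^n_\rho)$ replaced by $(\GM^n_\rho,\LGM^n_\rho)$. So let $(X,D)$ be an $(n+1)$-dimensional log Fano manifold with $D\neq 0$, log Fano index $r\geq (n+\rho)/\rho$ and $\rho(X)\geq\rho$. The first observation is that the log Fano pseudoindex is at least $r$, hence strictly larger than $1$ (since $(n+\rho)/\rho>1$ for $n\geq 1$, $\rho\geq 2$) and therefore $\geq 2$; thus Corollary \ref{piccor_intro} applies and $\Pic(X)\rightarrow\Pic(D_1)$ is injective for every irreducible component $D_1\subset D$. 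Because the Picard group of a (log) Fano manifold is finitely generated with rank equal to its Picard number, this injectivity gives $\rho(X)\leq\rho(D_1)$.

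Next I would pass to a component by adjunction. For any component $D_1\subset D$, writing $D=D_1+D'$, we have $-(K_{D_1}+D'|_{D_1})=-(K_X+D)|_{D_1}$, which is ample; as $D'|_{D_1}$ is a reduced simple normal crossing divisor on the smooth variety $D_1$, the pair $(D_1,D'|_{D_1})$ is an $n$-dimensional log Fano manifold, and restricting $-(K_X+D)=rH$ shows its log Fano index is $\geq r$. I then claim $D$ is irreducible. Suppose not, and let $D_2$ be a second component. If $D_2$ meets $D_1$, then $D'|_{D_1}\neq 0$, so $(D_1,D'|_{D_1})$ is a genuine $n$-dimensional log Fano manifold with $\rho(D_1)\geq\rho(X)\geq\rho$ and log index $\geq r\geq(n+\rho)/\rho>(n-1+\rho)/\rho$; then $\LM^n_\rho$ forces its log index to equal $(n-1+\rho)/\rho$, a contradiction. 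If instead $D_2$ is disjoint from $D_1$, then $\sO_X(D_2)|_{D_1}\cong\sO_{D_1}$, so injectivity of $\Pic(X)\rightarrow\Pic(D_1)$ forces $\sO_X(D_2)\cong\sO_X$, i.e.\ $D_2\sim 0$; but $D_2$ is a nonzero effective divisor, a contradiction. Hence $D=D_1$ is irreducible.

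With $D$ irreducible, $D'=0$, so $D$ is an $n$-dimensional Fano manifold with Fano index $r_D\geq r\geq (n+\rho)/\rho$ and $\rho(D)\geq\rho(X)\geq\rho$. Now $\M^n_\rho$ applies and yields $\rho(D)=\rho$, $r_D=(n+\rho)/\rho$ and $D\cong(\pr^{r_D-1})^\rho$. Feeding this back, $\rho\leq\rho(X)\leq\rho(D)=\rho$ forces $\rho(X)=\rho$, and $(n+\rho)/\rho=r_D\geq r\geq(n+\rho)/\rho$ forces $r=(n+\rho)/\rho$, which is exactly the value $((n+1)-1+\rho)/\rho$ required by $\LM^{n+1}_\rho$.

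The hard part will be the final step: upgrading these numerical coincidences to an isomorphism of $(X,D)$ with one of the explicit pairs of Type $(\rho,r;m_1,\dots,m_{\rho-1})$ in Example \ref{lm}. Here I would exploit that $\rho(X)=\rho(D)=\rho$ together with the injection $\Pic(X)\hookrightarrow\Pic(D)\cong\Pic((\pr^{r-1})^\rho)$ identifies $\overline{\NE}(X)$ with a simplicial cone of $\rho$ extremal rays matching those of $(\pr^{r-1})^\rho$, and then analyze the corresponding $\rho$ elementary contractions of $X$: each should restrict on $D$ to a projection of the product, and one checks that the contractions exhibit $X$ as the projective bundle over a product of projective spaces appearing in Example \ref{lm}, with the twisting integers $m_1,\dots,m_{\rho-1}\geq 0$ recorded by the normal bundle $N_{D/X}$. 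Matching this structure to the explicit construction—rather than the preceding soft numerical argument—is where the real work lies, and it is naturally handled by the classification techniques of \cite{logfano}.
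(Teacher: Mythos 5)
Your reduction to the case of irreducible $D$ and the resulting numerical conclusions are correct and essentially identical to the paper's own first half: the (pseudo)index is $\geq 2$, so Corollary \ref{piccor} \eqref{piccor1} gives $\rho(D_1)\geq\rho(X)\geq\rho$ for every component $D_1\subset D$; a component whose conductor divisor is nonzero would contradict Conjecture $\LM^n_\rho$ (resp.\ $\LGM^n_\rho$), since its log Fano index (resp.\ pseudoindex) exceeds $(n-1+\rho)/\rho$; and then $\M^n_\rho$ (resp.\ $\GM^n_\rho$) applied to $D$ gives $\rho(X)=\rho$, $r=(n+\rho)/\rho$ and $D\simeq(\pr^{r-1})^\rho$. (Your treatment of a hypothetical component disjoint from $D_1$ via Picard injectivity is a harmless variant; the paper instead uses that $D$ is connected, by Theorem \ref{fujino_thm}.)

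The genuine gap is the final, structural step, which you yourself flag as ``where the real work lies'' but do not carry out, and the route you sketch for it would not work as stated. The claim that the injection $\Pic(X)\hookrightarrow\Pic(D)$ together with $\rho(X)=\rho(D)$ ``identifies $\overline{\NE}(X)$ with a simplicial cone of $\rho$ extremal rays matching those of $(\pr^{r-1})^\rho$'' is unjustified: surjectivity of $\NC(D)\rightarrow\NC(X)$ only says that the pushforward of $\overline{\NE}(D)$ is a full-dimensional subcone of $\overline{\NE}(X)$; it does not force equality of the cones, nor that the extremal rays of $X$ are represented by curves in $D$, so your ``$\rho$ elementary contractions restricting to the projections'' are not yet available. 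Supplying such a contraction is exactly what the paper's MMP machinery does, and it is the heart of the proof. Concretely (in the pseudoindex version; the index version is identical): running the $(-D)$-MMP of Notation \ref{logfano_mmp}, Proposition \ref{fund_prop} \eqref{fund_prop2} and \eqref{fund_prop3} show that $\pi_0|_D$ is an algebraic fiber space and not finite, hence $\dim\pi_0(D)<n$ because $D\simeq(\pr^{\iota-1})^\rho$; this rules out $\pi_0$ being birational (the strict transform $D^1$ is nonzero by \eqref{fund_prop1}), so $k=0$ and $\pi_0\colon X\rightarrow Y^0$ is a fiber-type extremal contraction with $\rho(Y^0)=\rho-1$ and $\pi_0(D)=Y^0$, forcing $Y^0\simeq(\pr^{\iota-1})^{\rho-1}$ with $\pi_0|_D$ a coordinate projection. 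Then Wi\'sniewski's inequality \cite{wisn} gives, for a minimal rational curve $C$ of $R^0$, the chain $\iota-1=\dim(\pi_0^{-1}(y)\cap D)\geq\dim\pi_0^{-1}(y)-1\geq(-K_X\cdot C)-2=(-(K_X+D)\cdot C)+(D\cdot C)-2\geq\iota-1$, so all fibers of $\pi_0$ have dimension $\iota$, $(-K_X\cdot C)=\iota+1$ and $(D\cdot C)=1$; this is what makes $\pi_0$ a $\pr^\iota$-bundle with $D$ a $\pr^{\iota-1}$-subbundle. Finally, your remark that the twists $m_i$ are ``recorded by $\sN_{D/X}$'' is only the start of the identification with Example \ref{lm}: one still has to prove $m_i\geq 0$ and reconstruct $X$ from $D$ and its normal bundle, which is Lemma \ref{technical} (restriction to subbundles over general fibers of the projections, the case $\rho=2$ from \cite{logfano}, and splitting the sequence $0\rightarrow\sO_{Y^0}\rightarrow(\pi_0)_*\sO_X(D)\rightarrow(\pi_0|_D)_*\sN_{D/X}\rightarrow 0$). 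Without these three ingredients --- the MMP input, the Wi\'sniewski bound, and Lemma \ref{technical} --- the structural conclusion of the conjecture remains unproven.
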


Using this theorem, we obtain the following corollary immediately. 

\begin{corollary}[= Corollary \ref{lgm_cor} \eqref{lgm_cor1}]\label{rhothree_intro}
Let $(X, D)$ be an $n$-dimensional log Fano manifold with the log Fano 
pseusoindex $\iota$ and $D\neq 0$ which satisfies that 
$\rho(X)\geq 3$ and $\iota\geq (n+2)/3>1$. Then $\iota=(n+2)/3$ and 
$(X, D)$ is isomorphic to the case of Type $(3, r; m_1,m_2)$ 
with $m_1, m_2\geq 0$ in Example \ref{lm}. That is, 
\begin{eqnarray*}
X & \simeq & \pr_{\pr^{\iota-1}\times\pr^{\iota-1}}
(\sO^{\oplus\iota}\oplus\sO(m_1,m_2))\\
D & \simeq & \pr_{\pr^{\iota-1}\times\pr^{\iota-1}}(\sO^{\oplus\iota})
\end{eqnarray*}
with $m_1, m_2\geq 0$, where the embedding is obtained by the canonical projection under these isomorphisms. 
\end{corollary}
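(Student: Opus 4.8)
The plan is to recognize that Corollary \ref{rhothree_intro} is nothing but Conjecture $\LGM^n_3$ made explicit, and then to prove that this conjecture holds for every $n$ by an induction on the dimension that feeds the known cases of the generalized Mukai conjecture into Theorem \ref{mukai_log_intro}. First I would observe that, setting $\rho = 3$, the hypotheses $\rho(X)\geq 3$ and $\iota\geq (n+2)/3$ of the corollary coincide with those of Conjecture \ref{LGMconj} (since $(n+\rho-1)/\rho = (n+2)/3$ when $\rho=3$), and that the displayed description of $(X,D)$ is exactly the shape of Type $(3, r; m_1, m_2)$ recorded in Example \ref{lm}. Thus it suffices to establish $\LGM^n_3$ for all $n\geq 2$.

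To run the induction, I would take as base case $n=2$: here $\LGM^2_3$ is a special instance of the truth of $\LGM^2_\rho$ proved in \cite[Proposition 4.1]{logfano} and recalled in Remark \ref{intrormk}. For the inductive step I would apply Theorem \ref{mukai_log_intro} with $\rho=3$, which asserts that $\GM^n_3$ together with $\LGM^n_3$ implies $\LGM^{n+1}_3$. The generalized Mukai conjecture $\GM^n_3$ is known unconditionally for every $n$, since $\GM^n_\rho$ holds whenever $\rho\leq 3$ by \cite{NO} (again see Remark \ref{intrormk}); combining this with the inductive hypothesis $\LGM^n_3$ yields $\LGM^{n+1}_3$, completing the induction.

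It remains only to confirm that the explicit model is $n$-dimensional and carries the asserted pseudoindex. A direct count gives $\dim \pr_{\pr^{\iota-1}\times\pr^{\iota-1}}(\sO^{\oplus\iota}\oplus\sO(m_1,m_2)) = 2(\iota-1)+\iota = 3\iota-2 = n$, using $\iota=(n+2)/3$, so the bundle indeed has the claimed dimension and the section $D\simeq \pr_{\pr^{\iota-1}\times\pr^{\iota-1}}(\sO^{\oplus\iota})$ sits inside it via the canonical projection. There is no genuine obstacle at this stage: all of the geometric content has already been absorbed into Theorem \ref{mukai_log_intro} and into the unconditional truth of $\GM^n_\rho$ for $\rho\leq 3$, so the corollary reduces to correctly identifying the base case and the unconditional input of the induction.
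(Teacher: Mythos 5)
Your proposal is correct and follows exactly the paper's route: the paper proves this corollary as "an immediate corollary of Theorem \ref{mukai_log} and Remark \ref{intrormk}," which is precisely the induction you spell out --- base case $\LGM^2_3$ from \cite[Proposition 4.1]{logfano}, inductive step via Theorem \ref{mukai_log} fed by the unconditional truth of $\GM^n_3$ from \cite{NO}. You merely make explicit the induction and the dimension count that the paper leaves implicit, so there is nothing to correct.
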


\smallskip

\noindent\textbf{Acknowledgements.}
The author got the main idea (Section \ref{run}) of this article 
during the participation of Singularity Seminar in Nihon University. 
He thanks Professor Kei-ichi Watanabe, the organizer of the seminar. 
The author is partially supported by JSPS Fellowships for Young Scientists. 

\bigskip

\noindent\textbf{Notation and terminology.}
There is no difference between the notation of this and the previous 
paper \cite{logfano}. 

We always work in the category of algebraic 
(separated and finite type) schemes over a fixed algebraically closed field $\Bbbk$ 
of characteristic zero. 
A \emph{variety} means a connected and reduced algebraic scheme. 
For a variety $X$, the set of singular points on $X$ is denoted by $\Sing(X)$. 

For the theory of extremal contraction, we refer the readers to \cite{KoMo}. 
For a complete variety $X$, the Picard number of $X$ is denoted by  $\rho(X)$. 
For a complete variety $X$ and a closed subscheme $D$ on $X$, 
the image of the homomorphism $\NC(D)\rightarrow\NC(X)$ is denoted by 
$\NC(D, X)$. 
For a smooth projective variety $X$ and a $K_X$-negative extremal ray $R\subset\overline{\NE}(X)$,
\[
l(R):=\min\{(-K_X\cdot C)\mid C\text{ is a rational curve with } [C]\in R\}
\]
is called the \emph{length} $l(R)$ of $R$. 
A rational curve $C\subset X$ with 
$[C]\in R$ and $(-K_X\cdot C)=l(R)$ is called \emph{a minimal rational curve of $R$}.

For a morphism of algebraic schemes $f\colon X\rightarrow Y$, we define the 
\emph{exceptional locus} $\Exc(f)$ \emph{of $f$} by 
\[
\Exc(f):=\{x\in X\mid f \text{ is not isomorphism around }x\}.
\]

For algebraic schemes $X_1,\dots,X_m$, the projection is denoted by 
$p_{i_1,\dots,i_k}\colon\prod_{i=1}^mX_i\rightarrow\prod_{j=1}^kX_{i_j}$ for 
any $1\leq i_1<\cdots<i_k\leq m$. 

For an algebraic scheme $X$ and a locally free sheaf of finite rank $\sE$ on $X$, 
let $\pr_X(\sE)$ be the projectivization of $\sE$ in the sense of Grothendieck 
and $\sO_\pr(1)$ be the tautological invertible sheaf. We usually denote 
the projection by $p\colon\pr_X(\sE)\rightarrow X$. 

We write $\sO(m_1,\dots,m_s)$ on $\pr^{n_1}\times\dots\times\pr^{n_s}$ instead of 
$p_1^*\sO_{\pr^{n_1}}(m_1)\otimes\cdots\otimes p_s^*\sO_{\pr^{n_s}}(m_s)$ 
for simplicity.

\section{Log Fano manifolds}

We recall the definitions and some properties of log Fano manifolds and 
snc Fano varieties quickly. For more informations, see \cite[Section 2]{logfano}. 

\begin{definition}[snc Fano variety, log Fano manifold {\cite[Definition 2.9]{logfano}}]
\label{logFano_dfn}
\begin{enumerate}
\renewcommand{\theenumi}{\roman{enumi}}
\renewcommand{\labelenumi}{(\theenumi)}
\item\label{logFano_dfn1}
A \,\, variety $\sX$ is called an $n$-dimensional 
\emph{simple normal crossing} (\emph{snc,} for short) 
\emph{Fano variety} if 
$\sX$ is an equi-$n$-dimensional projective variety having 
normal crossing singularities (that is, \[
\sO_{\sX,x}\simeq\Bbbk[[x_1,\dots,x_{n+1}]]/(x_1\cdots x_k)
\]
holds 
for some $1\leq k\leq n+1$, for any closed point $x\in\sX$), 
each irreducible component $X$ of $\sX$ is smooth 
and $\omega_{\sX}^\vee$ (the dual of the dualizing sheaf) is ample. 
\item\label{logFano_dfn2}
An $n$-dimensional \emph{log Fano manifold} is a pair $(X, D)$ such that 
$X$ is an $n$-dimensional smooth projective 
variety and $D$ is a reduced and simple normal crossing divisor on $X$ 
(that is, $D$ has normal crossing singularities 
and each irreducible component of $D$ is smooth) 
with $-(K_X+D)$ ample. 
\end{enumerate}
\end{definition}

\begin{definition}[index {\cite[Definition 2.11]{logfano}}]\label{index_dfn}
\begin{enumerate}
\renewcommand{\theenumi}{\roman{enumi}}
\renewcommand{\labelenumi}{(\theenumi)}
\item\label{index_dfn1}
Let $\sX$ be an snc Fano variety. We define the \emph{snc Fano index} of $\sX$ as 
\[
\max\{r\in\Z_{>0}\mid\omega_{\sX}^{\vee}\simeq\sL^{\otimes r}
\text{ for some }\sL\in\Pic(\sX)\}.
\]
\item\label{index_dfn2}
Let $(X, D)$ be a log Fano manifold. We define the \emph{log Fano index} 
of $(X, D)$ as 
\[
\max\{r\in\Z_{>0}\mid -(K_X+D)\sim rL
\text{ for some Cartier divisor }L\text{ on }X\}.
\]
\end{enumerate}
\end{definition}

\begin{definition}[pseudoindex {\cite[Definition 2.12]{logfano}}]\label{ps_dfn}
\begin{enumerate}
\renewcommand{\theenumi}{\roman{enumi}}
\renewcommand{\labelenumi}{(\theenumi)}
\item\label{ps_dfn1}
Let $\sX$ be an snc Fano variety. We define the 
\emph{snc Fano pseudoindex} of $\sX$ as 
\[
\min\{\deg_C(\omega_{\sX}^{\vee}|_C)\mid C\subset\sX
\text{ rational curve}\}.
\]
\item\label{ps_dfn2}
Let $(X, D)$ be a log Fano manifold. We define the \emph{log Fano pseudoindex} 
of $(X, D)$ as 
\[
\min\{(-(K_X+D)\cdot C)\mid C\subset X
\text{ rational curve}\}.
\]
\end{enumerate}
\end{definition}

\begin{remark}[{\cite[Remark 2.13]{logfano}}]\label{pseudo_rmk}
For an snc Fano variety $\sX$ (resp.\ a log Fano manifold $(X, D)$), 
the snc Fano pseudoindex (resp.\ the log Fano pseudoindex) $\iota$ is divisible by the 
snc Fano index (resp.\ the log Fano index) $r$ by definition. 
In particular, $\iota\geq r$ holds. 
\end{remark}

\begin{definition}[{conductor divisor \cite[Definition 2.4]{logfano}}]\label{cond_dfn}
Let $\sX$ be an snc Fano variety with the irreducible decomposition 
$\sX=\bigcup_{1\leq i\leq m}X_i$. For any distinct $1\leq i$, $j\leq m$, 
the intersection $X_i\cap X_j$ can be seen as a smooth divisor $D_{ij}$ in $X_i$. 
We define 
\[
D_i:=\sum_{j\neq i}D_{ij}
\]
and call it the \emph{conductor divisor} in $X_i$ (with respect to $\sX$). 
We often write that 
$(X_i, D_i)\subset\sX$ is an irreducible component 
for the sake of simplicity. 
We also write $\sX=\bigcup_{1\leq i\leq m}(X_i, D_i)$ 
for emphasizing the conductor divisors. 
\end{definition}

\begin{remark}[{\cite[Remark 2.14]{logfano}}]\label{irr_rmk}
Let $\sX$ be an $n$-dimensional snc Fano variety with the snc Fano index $r$, 
the snc Fano pseudoindex $\iota$. 
Then the log Fano index of $(X, D)$ is divisible by $r$ and the log Fano pseudoindex 
of $(X, D)$ is at least $\iota$, where $(X, D)\subset\sX$ is an irreducible component 
with the conductor divisor.
\end{remark}

Now, we show several important properties for log Fano manifolds and snc Fano varieties 
without proofs. See \cite{logfano} for proofs. 

\begin{thm}[{\cite[Theorem 2.20 (1)]{logfano}}]\label{fujino_thm}
Let $(X,D)$ be a log Fano manifold such that 
the log Fano index is divisible by $r$ $($resp.\ the log Fano pseudo index $\geq\iota$$)$. 
Then $D$ is a $($connected$)$ snc Fano variety and the snc Fano index 
is also divisible by $r$ $($resp.\ the snc Fano pseudoindex $\geq\iota$$)$. 
\end{thm}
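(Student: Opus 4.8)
The plan is to check the clauses of Definition \ref{logFano_dfn} one at a time for $D$ with its reduced structure, and then to compare the two invariants by restricting line bundles and rational curves from $X$ to $D$. Write $D=\sum_{i=1}^{m}X_i$ for the irreducible decomposition. The simple normal crossing hypothesis means that locally on $X$ one can choose regular coordinates $x_1,\dots,x_n$ with $D=\{x_1\cdots x_k=0\}$, so that $\sO_{D,x}\simeq\Bbbk[[x_1,\dots,x_n]]/(x_1\cdots x_k)$. Hence $D$ is equi-$(n-1)$-dimensional, has normal crossing singularities, and its irreducible components $X_i$ are smooth. What remains for the snc Fano condition is the ampleness of $\omega_D^\vee$ and the connectedness of $D$.

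First I would handle ampleness through adjunction. As an effective Cartier divisor on the smooth variety $X$, the scheme $D$ is Gorenstein, and the adjunction formula for the dualizing sheaf gives $\omega_D\simeq\sO_X(K_X+D)|_D$, so that $\omega_D^\vee\simeq\sO_X(-(K_X+D))|_D$. Since the restriction of an ample line bundle to a closed subscheme is ample, $\omega_D^\vee$ is ample. For connectedness I would invoke Kodaira vanishing: from $\sO_X(-D)\simeq\omega_X\otimes\sO_X(-(K_X+D))$ with $-(K_X+D)$ ample we get $H^1(X,\sO_X(-D))=0$, and the long exact sequence attached to $0\to\sO_X(-D)\to\sO_X\to\sO_D\to 0$ then shows that the restriction $H^0(X,\sO_X)\to H^0(D,\sO_D)$ is an isomorphism; as $H^0(X,\sO_X)=\Bbbk$, the scheme $D$ is connected. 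Together these show that $D$ is a connected snc Fano variety.

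It remains to compare the invariants. For the index, assume $-(K_X+D)\sim rL$ with $L$ Cartier (which holds once the log Fano index is divisible by $r$); restricting gives $\omega_D^\vee\simeq(\sO_X(L)|_D)^{\otimes r}$, so $\omega_D^\vee$ is an $r$-th power in $\Pic(D)$. Writing $s$ for the snc Fano index, we also have $\omega_D^\vee\simeq\sN^{\otimes s}$ with $s$ maximal; to obtain $r\mid s$ I would argue in $\Pic(D)$: with $g=\gcd(r,s)$, $r=gr'$, $s=gs'$, and integers $x,y$ chosen so that $xr'+ys'=1$, the class $\sN^{\otimes x}\otimes(\sO_X(L)|_D)^{\otimes y}$ is an $\operatorname{lcm}(r,s)$-th root of $\omega_D^\vee$, so the maximality of $s$ forces $\operatorname{lcm}(r,s)=s$, i.e.\ $r\mid s$. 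For the pseudoindex, suppose the log Fano pseudoindex is $\geq\iota$ and let $C\subset D$ be a rational curve. Then $C$ is also a rational curve on $X$, and the adjunction identity gives $\deg_C(\omega_D^\vee|_C)=(-(K_X+D)\cdot C)\geq\iota$; minimizing over $C$ yields that the snc Fano pseudoindex is $\geq\iota$.

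The step I expect to demand the most care is the index comparison, because the snc Fano index is defined as the \emph{maximal} divisibility of $\omega_D^\vee$ in $\Pic(D)$: exhibiting $\omega_D^\vee$ as an $r$-th power only shows $r\leq s$, and ruling out a maximal root whose order is incompatible with $r$ is exactly what the B\'ezout computation above accomplishes (it genuinely uses that $\omega_D^\vee$ is simultaneously an $r$-th and an $s$-th power, and it is robust to torsion in $\Pic(D)$). Connectedness is the only other point that is not purely formal; everything else reduces to adjunction and to restricting ample classes and rational curves from $X$ to $D$.
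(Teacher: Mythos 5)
Your proof is correct. One caveat about the comparison: this paper never proves Theorem \ref{fujino_thm} at all --- it is imported without proof from \cite[Theorem 2.20 (1)]{logfano} (the text explicitly says the results of that section are stated ``without proofs''), so the only meaningful comparison is with the standard route that the cited paper takes, which your argument essentially reconstructs: the local normal-crossing structure of $D$ is immediate from the snc hypothesis, ampleness of $\omega_D^\vee$ comes from adjunction $\omega_D\simeq\sO_X(K_X+D)|_D$, connectedness from Kodaira vanishing applied to $\sO_X(-D)\simeq\omega_X\otimes\sO_X(-(K_X+D))$, and the pseudoindex bound from viewing rational curves in $D$ as rational curves in $X$. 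The one point where you genuinely add something is the index divisibility: as you correctly note, exhibiting $\omega_D^\vee$ as an $r$-th power only gives $r\leq s$ (with $s$ the snc Fano index), and your B\'ezout/lcm computation is a valid and self-contained way to upgrade this to $r\mid s$, working even if $\Pic(D)$ had torsion. The alternative, which is the structural route available in the paper, is to use that $\Pic(D)$ is finitely generated and torsion free --- this follows from Lemma \ref{tor_free} applied to each irreducible component of $D$ (each is a log Fano manifold with its conductor) together with the injection $\Pic(D)\hookrightarrow\bigoplus_i\Pic(D_i)$ of Proposition \ref{fanopicglue} --- since in such a group the set of $t$ for which a fixed class is $t$-divisible is precisely the set of divisors of the maximal one. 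Your argument buys independence from that structural fact; the torsion-free argument buys a slightly shorter proof given the machinery already in place.
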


\begin{proposition}[{\cite[Proposition 2.8, Theorem 2.20 (2)]{logfano}}]\label{fanopicglue}
Let $\sX$ be an $n$-dimensional snc Fano variety 
with the irreducible decomposition $\sX=\bigcup_{i=1}^mX_i$. 
We also let $X_{ij}:=X_i\cap X_j$ $($scheme theoretic intersection$)$ for any 
$1\leq i<j\leq m$. 
Then we have an exact sequence 
\[
0\rightarrow\Pic(\sX)\xrightarrow{\eta}\bigoplus_{i=1}^m\Pic(X_i)
\xrightarrow{\mu}\bigoplus_{1\leq i<j\leq m}\Pic(X_{ij}),
\]
where $\eta$ is the restriction homomorphism and 
\[
\mu\Bigl((\sH_i)_i\Bigr):=(\sH_i|_{X_{ij}}\otimes\sH_j^\vee|_{X_{ij}})_{i<j}.
\]
\end{proposition}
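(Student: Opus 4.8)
The plan is to compute $\Pic(\sX)=H^1(\sX,\sO_\sX^*)$ by descending along the normalization and its strata. For $\emptyset\neq I\subseteq\{1,\dots,m\}$ write $X_I:=\bigcap_{i\in I}X_i$ and let $\nu_I\colon X_I\hookrightarrow\sX$ be the inclusion; because $\sX$ has normal crossing singularities, each $X_I$ is smooth and, in the local model $\Bbbk[[x_1,\dots,x_{n+1}]]/(x_1\cdots x_k)$, the branches through a point correspond to the factors $x_i$. First I would prove that the multiplicative Mayer--Vietoris complex of sheaves on $\sX$
\[
1\rightarrow\sO_\sX^*\rightarrow\bigoplus_{|I|=1}\nu_{I*}\sO_{X_I}^*
\rightarrow\bigoplus_{|I|=2}\nu_{I*}\sO_{X_I}^*
\rightarrow\bigoplus_{|I|=3}\nu_{I*}\sO_{X_I}^*\rightarrow\cdots,
\]
with the Čech differentials built from the restriction maps, is a resolution of $\sO_\sX^*$. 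This is local on $\sX$: near a point where $k$ branches meet, the nerve of the branches is the contractible $(k-1)$--simplex and the local Picard groups vanish, so a compatible system of units on the branches glues uniquely to a unit on $\sX$, and the same holds in higher degrees. This step is essentially \cite[Proposition 2.8]{logfano}.

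Next I would feed this resolution into its hypercohomology spectral sequence
\[
E_1^{p,q}=\bigoplus_{|I|=p+1}H^q(X_I,\sO_{X_I}^*)\ \Longrightarrow\ H^{p+q}(\sX,\sO_\sX^*),
\]
the identification $H^q(\sX,\nu_{I*}\sO_{X_I}^*)=H^q(X_I,\sO_{X_I}^*)$ being Leray for the finite morphism $\nu_I$. Here $H^1(\sX,\sO_\sX^*)=\Pic(\sX)$, while $H^0(X_I,\sO_{X_I}^*)=(\Bbbk^*)^{c(I)}$ with $c(I)$ the number of connected components of $X_I$, and $H^1(X_I,\sO_{X_I}^*)=\Pic(X_I)$. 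Thus the row $q=0$ is exactly the simplicial cochain complex of the dual complex $\Delta(\sX)$ with coefficients in $\Bbbk^*$, and the row $q=1$ begins with $\bigoplus_i\Pic(X_i)\xrightarrow{\mu}\bigoplus_{i<j}\Pic(X_{ij})$, its first differential being the map $\mu$ of the statement. The five--term exact sequence of the spectral sequence then reads
\[
0\rightarrow H^1(\Delta(\sX),\Bbbk^*)\rightarrow\Pic(\sX)\xrightarrow{\ \eta\ }\ker\mu
\xrightarrow{\ d_2\ }H^2(\Delta(\sX),\Bbbk^*),
\]
with edge map $\eta$ the restriction to the components. Consequently $\eta$ is injective precisely when $H^1(\Delta(\sX),\Bbbk^*)=0$, and then $\ker\mu=\operatorname{im}\eta$ follows as soon as $d_2=0$, for which $H^2(\Delta(\sX),\Bbbk^*)=0$ suffices.

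It remains to annihilate these two combinatorial groups, and this is the substantive point where the Fano hypothesis is used. The key input is that the dual complex $\Delta(\sX)$ of an snc Fano variety is contractible; being simply connected it gives $H^1(\Delta(\sX),\Bbbk^*)=\Hom(H_1(\Delta(\sX)),\Bbbk^*)=0$, and being acyclic it gives $H^2(\Delta(\sX),\Bbbk^*)=0$ as well. This is a positivity statement in the spirit of the connectedness recorded in Theorem \ref{fujino_thm}, and it fails for general snc varieties, which is exactly why the Fano condition appears. Granting it, the five--term sequence collapses to
\[
0\rightarrow\Pic(\sX)\xrightarrow{\ \eta\ }\bigoplus_{i=1}^m\Pic(X_i)
\xrightarrow{\ \mu\ }\bigoplus_{1\leq i<j\leq m}\Pic(X_{ij}),
\]
as required. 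The hard part, as I see it, is precisely the topology of $\Delta(\sX)$: the sheaf resolution and the spectral sequence are formal, whereas the vanishing of the higher cocycle obstructions---equivalently the contractibility of the dual complex---is the genuine geometric content supplied by the ampleness of $\omega_\sX^\vee$.
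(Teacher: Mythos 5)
Your formal machinery is sound and is essentially the right skeleton: the multiplicative Mayer--Vietoris resolution of $\sO_\sX^*$ is indeed exact for normal crossing singularities, the hypercohomology spectral sequence is set up correctly, and it reduces the proposition to the vanishing of $H^1(\Delta(\sX),\Bbbk^*)$ and $H^2(\Delta(\sX),\Bbbk^*)$. The gap is that the one claim carrying all of the geometric content --- ``the dual complex of an snc Fano variety is contractible'' --- is asserted, not proved, and nothing in this paper or its stated ingredients delivers it. Theorem \ref{fujino_thm} only records connectedness of the boundary, which controls $\pi_0(\Delta(\sX))$ and nothing more. The vanishing one can actually extract cheaply from ampleness of $\omega_\sX^\vee$, namely $H^i(\sX,\sO_\sX)=0$ for $i>0$ (Fujino's Kodaira-type vanishing, fed through the \emph{additive} Mayer--Vietoris resolution of $\sO_\sX$), only yields acyclicity of $\Delta(\sX)$ with $\Bbbk$-coefficients, i.e.\ that $H_1(\Delta(\sX),\Z)$ and $H_2(\Delta(\sX),\Z)$ are torsion. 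That is genuinely insufficient for your reduction: since $\Bbbk^*$ is divisible (hence injective as a $\Z$-module) and contains all roots of unity, universal coefficients give $H^i(\Delta(\sX),\Bbbk^*)=\Hom(H_i(\Delta(\sX),\Z),\Bbbk^*)$, which is nonzero as soon as $H_i(\Delta(\sX),\Z)$ has torsion; so torsion classes would break both injectivity of $\eta$ and exactness at the middle. Contractibility of dual complexes of (dlt) Fano pairs is a genuinely hard theorem proved only later (Koll\'ar--Xu), so leaving it as a black box is a missing proof, not a citation.

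What the cited proof actually rests on is a stronger but far more elementary combinatorial fact: for an snc Fano variety \emph{every} stratum $X_I=\bigcap_{i\in I}X_i$ is nonempty and connected, so $\Delta(\sX)$ is the full $(m-1)$-simplex and the $q=0$ row of your spectral sequence is exact in positive degrees with \emph{any} coefficient group. This is the geometric input behind the citation \cite[Theorem 2.20 (2)]{logfano} accompanying the formal part \cite[Proposition 2.8]{logfano}, and it is also what makes \cite[Corollary 2.21]{logfano} (the bound $m\leq n+1$, used in Corollary \ref{bound_cor}) a one-line dimension count. It is proved by induction on dimension rather than by topology: each component $X_i$ with its conductor $D_i$ is a log Fano manifold (Remark \ref{irr_rmk}), so $D_i$ is a \emph{connected} snc Fano variety of one lower dimension (Theorem \ref{fujino_thm}); applying the inductive hypothesis to $D_i$ one sees that each $X_{ij}$ is irreducible and that any two components of $\sX$ meeting a common component must meet each other (their traces are components of the connected snc Fano variety $D_i$, hence intersect), which together with connectedness of $\sX$ forces all pairs, and then all deeper strata, to be nonempty and connected. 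If you replace your contractibility claim by this induction, your spectral-sequence argument closes up and yields the proposition; as written, the proposal proves only the formal half of the statement.
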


\begin{lemma}[{\cite[Lemma 2.16]{logfano}, \cite[Corollary 2.2, Lemma 2.3]{Maeda}}]\label{tor_free}
Let $(X,D)$ be a log Fano manifold. 
Then $\Pic(X)$ is torsion free. Furthermore if $\Bbbk=\C$, 
the homomorphism 
\[
\Pic(X)\rightarrow H^2(X^\text{\rm{an}}; \Z)
\]
is isomorphism. 
\end{lemma}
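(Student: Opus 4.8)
The plan is to reduce the statement to the theory of varieties of Fano type by passing to an auxiliary klt boundary, and then to combine the Kawamata--Viehweg vanishing theorem with the simple connectedness of rationally connected varieties.

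First I would install a klt structure on $X$. Set $A:=-(K_X+D)$, which is ample. Since $D$ is reduced with simple normal crossing support, the pair $(X,(1-\epsilon)D)$ is klt for every rational $0<\epsilon<1$, and
\[
-\bigl(K_X+(1-\epsilon)D\bigr)=A+\epsilon D.
\]
Because the ample cone is open in $\ND(X)_\R$ and $[A]$ lies in its interior, the class $[A]+\epsilon[D]$ stays ample for all sufficiently small rational $\epsilon>0$. Fixing such an $\epsilon$ and putting $\Delta:=(1-\epsilon)D$, the pair $(X,\Delta)$ is klt with $-(K_X+\Delta)$ ample, so $X$ is a variety of \emph{Fano type}.

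From this I would extract two ingredients. The log Kawamata--Viehweg vanishing theorem, applied to the zero divisor on the klt pair $(X,\Delta)$, yields $H^i(X,\sO_X)=0$ for all $i>0$ because $0-(K_X+\Delta)=A+\epsilon D$ is ample; in particular $H^1(X,\sO_X)=H^2(X,\sO_X)=0$. Moreover, a variety of Fano type is \emph{rationally connected} (Zhang, Hacon--McKernan), and a smooth projective rationally connected variety over a field of characteristic zero is \emph{simply connected} (Campana, Koll\'ar--Miyaoka--Mori); thus its \'etale fundamental group vanishes, and $H_1(X;\Z)=0$ when $\Bbbk=\C$.

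Finally I would assemble the two parts. Torsion freeness of $\Pic(X)$ follows from simple connectedness: a nontrivial torsion line bundle would determine a nontrivial connected finite cyclic \'etale cover of $X$, contradicting the triviality of the \'etale fundamental group, and by the Lefschetz principle it suffices to argue after base change to $\C$. When $\Bbbk=\C$, the exponential sequence on $X^{\mathrm{an}}$ together with $H^1(X,\sO_X)=H^2(X,\sO_X)=0$ shows that $c_1\colon\Pic(X)\to H^2(X^{\mathrm{an}};\Z)$ is injective (its kernel is the image of $H^1(X,\sO_X)$, so $\Pic^0(X)=0$) and surjective (its cokernel injects into $H^2(X,\sO_X)=0$), which is the asserted isomorphism; torsion freeness is then also visible since $H^2(X^{\mathrm{an}};\Z)$ is torsion free once $H_1(X;\Z)=0$. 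The main obstacle is the external input that Fano type forces rational connectedness and hence simple connectedness; the short klt construction in the second step is exactly the device that makes that input applicable.
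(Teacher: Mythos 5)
Your proof is correct, but note first that the paper itself contains no argument for this lemma: it is one of the properties stated explicitly ``without proofs'' and attributed to \cite[Lemma 2.16]{logfano} and \cite[Corollary 2.2, Lemma 2.3]{Maeda}, and your route genuinely differs from the classical argument behind those references. The shared part is the vanishing $H^i(X,\sO_X)=0$ for $i>0$ (Kawamata--Viehweg or Norimatsu vanishing, applied essentially via the same klt perturbation you use) and the exponential-sequence argument over $\C$. The difference is torsion freeness: the classical proof uses the covering trick instead of rational connectedness. Namely, a line bundle $\sL$ of exact order $m>1$ determines a connected cyclic \'etale cover $\pi\colon Y\rightarrow X$ of degree $m$; since $K_Y+\pi^*D=\pi^*(K_X+D)$ and \'etale pullback preserves snc divisors, $(Y,\pi^*D)$ is again a log Fano manifold, so the same vanishing gives $\chi(\sO_Y)=1$, while $\chi(\sO_Y)=\sum_{i=0}^{m-1}\chi(X,\sL^{-i})=m\,\chi(\sO_X)=m$ because each $\sL^{-i}$ is numerically trivial and Euler characteristics depend only on numerical classes (Riemann--Roch); hence $m=1$. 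Applied to an arbitrary connected finite \'etale cover, the same trick even shows the \'etale fundamental group is trivial, so it recovers everything your argument provides. What your approach buys is conceptual economy --- simple connectedness is quoted rather than re-derived --- at the cost of importing machinery (rational connectedness of Fano type varieties due to Zhang and Hacon--M\textsuperscript{c}Kernan, plus Campana's simple connectedness of rationally connected manifolds) that is far deeper than the statement requires, and in particular postdates Maeda's 1986 proof; the classical route is self-contained given the vanishing theorem you already invoke. Both arguments are complete and finish identically over $\C$.
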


The following result is most essential in this article. 

\begin{thm}[{\cite[Corollary 1.3.2]{BCHM}, \cite[Theorem 2.24]{logfano}}]\label{dream}
Let $(X, D)$ be a log Fano manifold. Then $X$ is a Mori dream space. 
\end{thm}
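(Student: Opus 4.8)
The plan is to deduce the statement from the theorem of Birkar--Cascini--Hacon--McKernan that a $\Q$-factorial projective variety of \emph{Fano type} is a Mori dream space; recall that $X$ is of Fano type if it carries an effective $\Q$-divisor $\Delta$ with $(X,\Delta)$ klt and $-(K_X+\Delta)$ ample. Thus the whole problem reduces to exhibiting a single such boundary $\Delta$ on $X$: once $X$ is known to be of Fano type, the defining properties of a Mori dream space in the sense of Hu--Keel---$\Q$-factoriality, rational polyhedrality and semiampleness of $\Nef(X)$, and the decomposition of $\Mov(X)$ into the pullbacks of the nef cones of the finitely many small $\Q$-factorial modifications of $X$---are exactly what \cite{BCHM} provides.

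To build $\Delta$ I would simply perturb the given boundary. As $X$ is smooth it is automatically $\Q$-factorial and normal, so only the klt-with-ample-anti-log-canonical condition remains. Set $\Delta:=(1-\epsilon)D$ for a small positive rational $\epsilon$. Because $X$ is smooth and $D$ is reduced and simple normal crossing, the pair $(X,(1-\epsilon)D)$ is log smooth with all boundary coefficients equal to $1-\epsilon\in[0,1)$, hence klt. The remaining point is the ampleness of
\[
-(K_X+\Delta)=-(K_X+D)+\epsilon D.
\]
Here the crux of the elementary part is that one cannot argue curve-by-curve, since $\epsilon D$ may be negative on curves contained in $D$; instead I would use that $-(K_X+D)$ is ample and therefore lies in the interior of the nef cone of the finite-dimensional space $\ND(X)_{\R}$. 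The class $-(K_X+D)+\epsilon D$ moves along a line segment back to this interior point as $\epsilon\to 0^+$, and since the ample cone is open and convex, the perturbed class stays ample for all sufficiently small $\epsilon>0$. Fixing such an $\epsilon$ presents $X$ as a variety of Fano type.

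The step I expect to carry all the genuine weight is the cited input \cite{BCHM}: the reduction above is purely formal, whereas the Mori dream space conclusion ultimately rests on the finite generation of the associated adjoint (Cox) rings and on the termination of the log minimal model program with scaling for klt Fano-type pairs. Taking that as a black box, I would finish with the bookkeeping on the Picard group: being of Fano type, $X$ is rationally connected, so $h^1(X,\sO_X)=0$ and $\Pic(X)_{\Q}=\ND(X)_{\Q}$, and combined with the torsion-freeness of $\Pic(X)$ recorded in Lemma \ref{tor_free} this matches the Picard-group requirement in the definition of a Mori dream space.
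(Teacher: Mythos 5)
Your proof is correct and follows essentially the same route as the paper: the paper cites this result from \cite{logfano} and \cite{BCHM}, and the underlying argument there is exactly your perturbation $(X,(1-\epsilon)D)$ to a klt pair with $-(K_X+(1-\epsilon)D)$ ample (using openness of the ample cone), followed by \cite[Corollary 1.3.2]{BCHM} together with the Hu--Keel criterion.
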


\section{Running a minimal model program}\label{run}

In this section, we consider a special minimal model program for a log Fano manifold, 
whose argument is similar to Casagrande's one \cite{cas1,cas2}. 

First, we see Ishii's result. 

\begin{lemma}[{\cite[Lemma 1.1]{ishii}, (cf. \cite[Theorem 2.2]{cas1}})]\label{ishii_lem}
Let $Y$ be a projective variety with canonical singularities. 
Let $R\subset\overline{\NE}(Y)$ be an extremal ray such that 
the contraction morphism $\pi\colon Y\rightarrow Z$ associated to $R$ is 
of birational type, and let $E:=\Exc(\pi)$. Assume that each fiber of 
the restriction morphism $\pi|_E\colon E\rightarrow\pi(E)$ to its image is of dimension 
one. Then each fiber of $\pi|_E$ is a union of smooth rational curves and 
$0<(-K_Y\cdot l)\leq 1$ for a component $l$ of a fiber of $\pi|_E$ which contains 
a Gorenstein point of $Y$. 
\end{lemma}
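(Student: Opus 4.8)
The plan is to deduce the whole statement from a Wi\'sniewski-type dimension inequality relating $\dim\Exc(\pi)$, the dimension of a fiber, and the length $l(R)$, and to establish that inequality by a bend-and-break argument carried out at a Gorenstein point. One preliminary observation fixes the easy half: since $\pi$ contracts the extremal ray $R$, it is $K_Y$-negative on $R$, so every curve $C$ with $[C]\in R$ satisfies $(-K_Y\cdot C)>0$; in particular $0<(-K_Y\cdot l)$. The real content is the rationality and smoothness of the fiber components together with the upper bound $(-K_Y\cdot l)\le 1$.

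First I would record the structure of the fibers. Because $Y$ has canonical (hence log terminal) singularities, the cone and contraction theorems apply, so $E=\Exc(\pi)$ is covered by rational curves whose classes lie in $R$, and every irreducible curve inside a fiber $F$ of $\pi|_E$ has class in $R$. Fix such an $F$, so $\dim F=1$ by hypothesis. The minimal rational curves of $R$ sweeping out $F$ are then, since $\dim F=1$, exactly its irreducible components; in particular each component $l$ is a rational curve and is a minimal rational curve of $R$. Consequently, whenever $l$ meets the locus where $Y$ is Gorenstein, $-K_Y$ is Cartier there and $(-K_Y\cdot l)=l(R)$ is a well-defined positive integer.

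It remains to bound $l(R)$, and for this I would prove the inequality $\dim E+\dim F\ge\dim Y+l(R)-1$. Take a component $l$, a Gorenstein point $y\in l$, and the normalization $f\colon\pr^1\to Y$ with $f(0)=y$; since $-K_Y$ is Cartier at $y$, the Riemann--Roch estimate gives $\dim_{[f]}\Hom(\pr^1,Y;0\mapsto y)\ge(-K_Y\cdot l)=l(R)$. All these deformations are contracted by $\pi$ and pass through $y$, hence remain inside the single fiber $F$; evaluating them sweeps out a subvariety of $F$ whose dimension, after subtracting the reparametrizations fixing $0$, is controlled from below by $\dim\Hom$, and comparing this with $\dim F$ and $\dim E$ yields the inequality (equivalently: a $\Hom$-scheme that is too large forces, via bend and break, a degeneration of $l$ into a connected cycle with a component through $y$ of strictly smaller positive anticanonical degree, impossible by minimality). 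This is exactly where the singularities intervene, and it is the main obstacle: the $\dim\Hom$ estimate and the bend-and-break degeneration must be run at Gorenstein points, since at worse points $-K_Y$ need not be Cartier and $f^*T_Y$ is not controlled. The hypothesis that $l$ meets the Gorenstein locus is what makes this legitimate; note also that for a divisorial $\pi$ the general point of the codimension-one locus $E$ is smooth, as $\Sing(Y)$ has codimension $\ge 2$.

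With the inequality in hand the conclusion is immediate. As $\pi$ is birational, $\dim E\le\dim Y-1$; if $\pi$ were small then $\dim E\le\dim Y-2$ would force $\dim F\ge l(R)+1\ge 2$, contradicting $\dim F=1$. Hence $\pi$ is divisorial, $\dim E=\dim Y-1$, and the inequality gives $\dim F\ge l(R)$, so $l(R)\le 1$; combined with $K_Y$-negativity this yields $0<(-K_Y\cdot l)=l(R)\le 1$ for every component $l$ meeting the Gorenstein locus. Finally, each such component is a minimal rational curve of anticanonical degree one through a Gorenstein point, and a standard bend-and-break argument (a singular or multiply covered curve of this kind would admit a partial normalization or a degeneration of strictly smaller positive degree, which is excluded) shows that its normalization is an embedding onto a smooth rational curve. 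Therefore each fiber of $\pi|_E$ is a union of smooth rational curves, completing the proof.
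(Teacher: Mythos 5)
First, a point of reference: the paper does not prove this lemma at all --- it is quoted from \cite[Lemma 1.1]{ishii} (cf.\ \cite[Theorem 2.2]{cas1}) --- so your attempt must be judged against what is actually true for canonical singularities. Judged that way, it fails: every key step imports deformation theory of rational curves that is valid on smooth (or at worst lci) varieties into a setting where $Y$ is only canonical. The estimate $\dim_{[f]}\Hom(\pr^1,Y;0\mapsto y)\geq(-K_Y\cdot l)$ and the Wi\'sniewski-type inequality $\dim E+\dim F\geq\dim Y+l(R)-1$ rest on Riemann--Roch bounds for $\chi(f^*T_Y)$, hence on control of the (co)tangent complex along the \emph{whole} curve. ``Gorenstein at one point'' buys none of this: Gorenstein is far weaker than lci, and the component $l$ is only assumed to meet the Gorenstein locus at a single point --- elsewhere it may lie in arbitrarily bad non-Gorenstein canonical singularities, so there is no locus along which your $\Hom$-scheme estimate can legitimately be run. (The Gorenstein hypothesis makes $K_Y$ Cartier near that point; it is not a license for deformation theory.) That this is not a fixable technicality is shown by a concrete counterexample to your inequality: a flipping contraction of a $\Q$-factorial terminal (hence canonical) threefold is a small $K$-negative extremal contraction whose exceptional locus is a curve, so all hypotheses of the lemma hold, yet your inequality would give $1+1\geq 3+l(R)-1$, i.e.\ $l(R)\leq 0$, which is absurd. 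Equivalently, your intermediate conclusion that $\pi$ must be divisorial is false --- and note that this paper applies the lemma, in the proof of Proposition \ref{fund_prop}, to MMP steps that may well be small (flipping) contractions.

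Two further gaps compound this. You assert, with no justification beyond $\dim F=1$, that every irreducible component of a fiber is a \emph{minimal} rational curve of $R$; without this, even a correct bound on $l(R)$ would not bound $(-K_Y\cdot l)$ for the particular component $l$ through the Gorenstein point. And the first assertion of the lemma concerns \emph{all} fiber components, including those avoiding the Gorenstein locus, which your bend-and-break sketch cannot reach (bend-and-break has the same smoothness problem anyway). The standard route to that part is cohomological rather than deformation-theoretic: $-K_Y$ is $\pi$-ample and $Y$ is klt, so relative Kawamata--Viehweg vanishing gives $R^1\pi_*\sO_Y=0$, and the theorem on formal functions then yields $H^1(F,\sO_F)=0$ for the reduction of every fiber, which forces each component to be a smooth rational curve arranged in a tree. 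The degree bound at Gorenstein points is exactly the delicate content of Ishii's proof and cannot be obtained by quoting smooth-variety results; if you want a self-contained argument, that is the part you must rebuild from scratch.
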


We recall that we can run a $B$-MMP for any $\Q$-divisor $B$ 
for a Mori dream space. 

\begin{proposition}[{\cite[Proposition 1.11 (1)]{HK}, \cite[Proposition 2.2]{cas2}}]\label{mmp_prop}
Let $X$ be a Mori dream space and $B$ be a $\Q$-divisor on $X$. 
Then there exists a sequence of birational maps among normal, $\Q$-factorial and 
projective varieties 
\begin{equation*}\label{mmp_seq}
X=X^0\stackrel{\sigma_0}{\dashrightarrow}X^1\stackrel{\sigma_1}{\dashrightarrow}
\dots\stackrel{\sigma_{k-1}}{\dashrightarrow}X^k
\end{equation*}
and a $\Q$-divisor $B^i$ on $X^i$ for any $0\leq i\leq k$ such that 
\begin{enumerate}
\renewcommand{\theenumi}{\roman{enumi}}
\renewcommand{\labelenumi}{(\theenumi)}
\item
The birational map $\sigma_i$ is decomposed into the following diagram 
\[\xymatrix{
X^i \ar@{-->}[rr]^{\sigma_i} \ar[dr]_{\pi_i} &  & X^{i+1} \ar[dl]^{\pi_i^+}\\
& Y^i & \\
}\]
and $B^i$ is the strict transform of $B$ on $X^i$ for any $0\leq i\leq k-1$. 
\item
The morphism $\pi_i$ is the birational contraction morphism associated to an 
extremal ray $R^i\subset\NE(X^i)$ such that $(B^i\cdot R^i)<0$ and 
$\pi_i^+$ is the flip of $\pi_i$ $($if $\pi_i$ is small$)$ or the identity morphism 
$($if $\pi_i$ is divisorial$)$ for any $0\leq i\leq k-1$.
\item
Either $B^k$ is nef on $X^k$ or there exists a fiber type extremal contraction 
$X^k\xrightarrow{\pi_k}Y^k$ associated to the extremal ray $R^k\subset\NE(X^k)$ 
such that $(B^k\cdot R^k)<0$ holds. 
\end{enumerate}
We call this step by a \emph{$B$-minimal model program} $($a 
\emph{$B$-MMP}, for short$)$. 
\end{proposition}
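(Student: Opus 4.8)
The plan is to construct the sequence inductively, at each stage appealing to the structural properties of Mori dream spaces recorded in \cite{HK}. The facts I will invoke are the following: for a normal, $\Q$-factorial, projective Mori dream space $Z$, the cone $\overline{\NE}(Z)$ is rational polyhedral; every extremal ray of $\overline{\NE}(Z)$ admits a contraction morphism; if such a contraction is small then its flip exists; the target of a divisorial contraction and the flip of a small contraction are again Mori dream spaces; and $Z$ admits only finitely many small $\Q$-factorial modifications, the movable cone $\Mov(Z)$ being decomposed into the finitely many nef chambers pulled back from them (the Mori chamber decomposition).

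I would set $X^0:=X$ and $B^0:=B$, and suppose inductively that we have produced a normal, $\Q$-factorial, projective Mori dream space $X^i$ together with the strict transform $B^i$ of $B$. If $B^i$ is nef on $X^i$, we stop and put $k:=i$; this is the first alternative of (iii). Otherwise $B^i$ lies outside $\Nef(X^i)$, so, since $\overline{\NE}(X^i)$ is rational polyhedral, one of its extremal rays $R^i$ satisfies $(B^i\cdot R^i)<0$. Let $\pi_i\colon X^i\rightarrow Y^i$ be the contraction of $R^i$. If $\pi_i$ is of fiber type we stop and put $k:=i$, giving the second alternative of (iii). If $\pi_i$ is divisorial we set $X^{i+1}:=Y^i$ and let $\pi_i^+$ be the identity; if $\pi_i$ is small we let $\pi_i^+\colon X^{i+1}\rightarrow Y^i$ be its flip. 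In either of the last two cases $X^{i+1}$ is again a normal, $\Q$-factorial, projective Mori dream space, and we let $B^{i+1}$ be the strict transform of $B^i$; this produces (i) and (ii), and the induction continues.

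The point that requires genuine argument is termination: the construction must reach a stopping stage after finitely many steps, and this is the step I expect to be the technical heart of the matter. Here I would argue as follows. A divisorial contraction drops the Picard number by one, so at most $\rho(X)$ of the $\pi_i$ can be divisorial, and it therefore suffices to bound the number of consecutive flips between two divisorial (or fiber-type) steps. Since a flip is an isomorphism in codimension one, every variety obtained by a chain of flips starting from a fixed $X^i$ is a small $\Q$-factorial modification of $X^i$; as $X^i$ admits only finitely many such modifications, and as the $B^i$-negativity forces the path to cross the walls of the Mori chamber decomposition of $\Mov(X^i)$ in one direction---so that no small modification is repeated---only finitely many flips can occur before the next divisorial or fiber-type step. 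Combining the two bounds shows that the process halts, which yields the desired sequence. This monotonicity and finiteness of the chamber-crossing is exactly what \cite[Proposition 1.11]{HK} provides.
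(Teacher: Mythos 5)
The paper itself gives no proof of this proposition: it is quoted, with citations, from \cite[Proposition 1.11 (1)]{HK} and \cite[Proposition 2.2]{cas2}, so your attempt can only be measured against the standard argument in those sources. The constructive part of your proof is correct and is indeed that standard argument: rational polyhedrality of $\overline{\NE}(X^i)$ produces a $B^i$-negative extremal ray when $B^i$ is not nef, the Mori dream space structure gives the contraction of any extremal ray (the nef cone is spanned by finitely many semiample classes) and the existence of flips, the relevant properties (normal, $\Q$-factorial, projective, Mori dream space) pass to the next model, and divisorial steps are bounded by $\rho(X)$.

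The gap is in the termination step, precisely where you place the weight. Reducing it to ``finitely many small $\Q$-factorial modifications plus no repetition'' is the right skeleton, but your justification of no repetition --- that $B$-negativity forces the walls of the Mori chamber decomposition to be crossed ``in one direction'' --- is an assertion, not an argument. The Mori chamber decomposition is a fan, not a hyperplane arrangement: a chamber can cross the linear span of a distant wall, so the naive monotone quantity (the number of wall hyperplanes separating the current chamber from the class of $B$) is not well defined, and crossing different walls ``toward $B$'' does not by itself exclude cycles when the extremal ray is chosen arbitrarily at each step. Moreover, your closing sentence defers exactly this point to \cite[Proposition 1.11]{HK}, which is circular here, since that proposition \emph{is} the statement being proved. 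The clean way to close the gap is the Negativity Lemma \cite[Lemma 3.38]{KoMo}: if $X^i\dashrightarrow X^{i+1}$ is a $B^i$-negative flip and $p\colon W\to X^i$, $q\colon W\to X^{i+1}$ is a common resolution, then $p^*B^i=q^*B^{i+1}+E$ with $E$ effective and nonzero (test against the strict transform of a flipping curve). Hence the pullback of $B$ to a fixed resolution strictly drops at every flip, so no small $\Q$-factorial modification can recur in a chain of flips; combined with the finiteness of such modifications and the bound on divisorial contractions, the program terminates.
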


For a log Fano manifold $(X, D)$, the smooth 
projective variety $X$ is a Mori dream space 
by Theorem \ref{dream}. Hence we can apply Proposition \ref{mmp_prop}. 
Moreover, we can choose a $B$-MMP which is also a $(K_X+D)$-MMP. 
The proof is completely same as that of \cite[Proposition 2.4]{cas2} 
(replacing $-K_X$ with $-(K_X+D)$). We omit a proof. 

\begin{proposition}[cf. {\cite[Proposition 2.4]{cas2}, \cite[Remark 3.10.10]{BCHM}}]\label{both_mmp}
Let $(X, D)$ be a log Fano manifold and $B$ be a $\Q$-divisor on $X$. 
Then we can choose a $B$-MMP which is also a $(K_X+D)$-MMP. 
\end{proposition}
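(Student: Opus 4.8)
The plan is to realize the desired $B$-MMP as a \emph{$(K_X+D)$-MMP with scaling} of the ample divisor $A:=-(K_X+D)$, along the lines of \cite[Proposition 2.4]{cas2}. At a given stage $X^i$ (writing $B^i$, $D^i$, $A^i$ for the strict transforms of $B$, $D$, $A$), I would set
\[
\lambda_i:=\inf\{t\geq 0\mid B^i+tA^i\text{ is nef on }X^i\}.
\]
Since $A=-(K_X+D)$ is ample, $B+tA$ is ample for $t\gg 0$, so $\lambda_0<\infty$; and if $\lambda_0=0$ then $B$ is already nef and there is nothing to do. The base case is immediate: because $A$ is ample, every extremal ray of $\overline{\NE}(X)$ is $(K_X+D)$-negative, so the first $B$-negative contraction is automatically $(K_X+D)$-negative.

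The heart of the argument is a single numerical computation that I would run at every stage. Assuming $\lambda_i>0$, the class $B^i+\lambda_i A^i$ is nef but not ample, so it vanishes on a nontrivial face of $\overline{\NE}(X^i)$; I would first check that this face must contain a $B^i$-negative extremal ray $R^i$ (otherwise $B^i+tA^i$ would remain nef for $t$ slightly below $\lambda_i$, contradicting the definition of the infimum). For such an $R^i$ one has $(B^i+\lambda_i A^i)\cdot R^i=0$ with $B^i\cdot R^i<0$ and $\lambda_i>0$, which forces $A^i\cdot R^i=-(B^i\cdot R^i)/\lambda_i>0$, i.e.\ $(K_{X^i}+D^i)\cdot R^i<0$. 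Thus the $B^i$-negative ray $R^i$ that drives the $B$-MMP is simultaneously $(K_{X^i}+D^i)$-negative, so its contraction (and the subsequent flip, when $\pi_i$ is small) is a legitimate step of the $(K_{X^i}+D^i)$-MMP. Crucially, $R^i$ is $(B^i+\lambda_i A^i)$-trivial, so $B^i+\lambda_i A^i$ descends to a nef class on $Y^i$ and its strict transform $B^{i+1}+\lambda_i A^{i+1}$ stays nef on $X^{i+1}$; hence $\lambda_{i+1}\leq\lambda_i$ and the construction iterates without ever requiring $A^i$ itself to remain ample.

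To complete the proof I would invoke \cite{BCHM} twice: once to guarantee that each of these $(K_{X^i}+D^i)$-negative small contractions admits a flip and that all the models $X^i$ stay $\Q$-factorial and klt, and once — together with the Mori dream space property of Theorem \ref{dream} — for termination, so that the process stops either with $\lambda_k=0$ (whence $B^k$ is nef) or with a fiber type $B^k$-negative contraction, which are exactly the two possibilities in part (iii) of Proposition \ref{mmp_prop}. Since every step was shown to be $(K_{X^i}+D^i)$-negative, the resulting $B$-MMP is also a $(K_X+D)$-MMP. The step I expect to be most delicate is the iteration itself: one must verify that the scaling class $B^i+\lambda_i A^i$ genuinely descends and remains nef across both small and divisorial contractions, since it is this persistence — rather than any ampleness of $A^i$, which is lost after the first nontrivial step — that keeps the chosen $B^i$-negative ray $(K_{X^i}+D^i)$-negative throughout the program.
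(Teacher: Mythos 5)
Your proposal is correct and coincides with the paper's intended argument: the paper omits the proof, stating it is identical to that of \cite[Proposition 2.4]{cas2} with $-K_X$ replaced by $-(K_X+D)$, and that proof is exactly your $B$-MMP with scaling of the ample divisor $-(K_X+D)$, including the key numerical step showing that each $B^i$-negative, $(B^i+\lambda_i A^i)$-trivial extremal ray is automatically $(K_{X^i}+D^i)$-negative. The only cosmetic slips — the pairs $(X^i,D^i)$ are dlt rather than klt, and the perturbation argument producing a $B^i$-negative ray in the trivial face tacitly uses that $\overline{\NE}(X^i)$ is rational polyhedral (Mori dream space property) — do not affect the argument.
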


We are in particular interested in the case where $B$ is equal to $-D$. 

\begin{notation}\label{logfano_mmp}
Let $(X, D)$ be an $n$-dimensional log Fano manifold with $D\neq 0$. 
We set the irreducible decomposition $D=\sum_{i=1}^mD_i$. 
We consider a $(-D)$-MMP (as in 
Proposition \ref{mmp_prop}) which is also a $(K_X+D)$-MMP as in 
Proposition \ref{both_mmp} 
(we note that this is also a $K_X$-MMP). 
We set $D_i^j$ such as the strict transform of $D_i$ in $X^j$ for any 
$1\leq i\leq m$ and $0\leq j\leq k$. 
Let $A^1\subset X^1$ be the indeterminancy locus of $\sigma_0^{-1}$, 
and for $2\leq j\leq k$, 
let $A^j\subset X^j$ be the union of the strict transform of $A^{j-1}\subset X^{j-1}$, 
with the indeterminancy locus of $\sigma_{j-1}^{-1}$. 
\end{notation}

The next lemma is essentially established by Casagrande \cite{cas1}. 
For a proof, see \cite[Lemma 3.8]{cas1}. 

\begin{lemma}[cf. {\cite[Lemma 3.8]{cas1}}]\label{negativity}
Under Notation \ref{logfano_mmp}, we have the following properties:
\begin{enumerate}
\renewcommand{\theenumi}{\arabic{enumi}}
\renewcommand{\labelenumi}{(\theenumi)}
\item\label{negativity1}
For any $1\leq j\leq k$, the dimension of $A^j$ is at most $n-2$,  
$X^j\setminus A^j$ is isomorphic to an open subscheme of $X$ and 
\[
\Sing(X^j)\subset A^j\subset D^j
\]
holds. Moreover, $\dim A^j>0$ whenever $\pi_{j-1}$ is small. 
\item\label{negativity2}
For any $1\leq j\leq k$, $X^j$ has terminal singularities and 
the pair $(X^j, D^j)$ is a dlt pair. 
Moreover, if $C\subset X^j$ is an irreducible curve not contained in $A^j$ and 
$C^0\subset X$ its strict transform, we have 
\[
(-(K_{X^j}+D^j)\cdot C)\geq(-(K_X+D)\cdot C^0),
\]
with the strict inequality whenever $C\cap A^j\neq\emptyset$.
\end{enumerate}
\end{lemma}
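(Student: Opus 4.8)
The plan is to prove the statement by induction on $j$, treating the two assertions as a single package and passing from $X^{j-1}$ to $X^j$ across the elementary step $\sigma_{j-1}$. The base case $j=0$ is trivial: $X^0=X$ is smooth (hence terminal, with $\Sing(X)=\emptyset$) and $(X,D)$ is log smooth, hence dlt, and setting $A^0=\emptyset$ makes the inequality in \eqref{negativity2} an equality with $C=C^0$. Throughout I will use that each $R^{j-1}$ is $(K_{X^{j-1}}+D^{j-1})$-negative and $K_{X^{j-1}}$-negative (Notation \ref{logfano_mmp}) and that $(D^{j-1}\cdot R^{j-1})>0$, the latter because the MMP is a $(-D)$-MMP.

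The crucial and most delicate step is to show $\Exc(\pi_{j-1})\subset D^{j-1}$. I would argue by contradiction: if some component of $\Exc(\pi_{j-1})$ escapes $D^{j-1}$, then, using the inductive facts $\dim A^{j-1}\leq n-2$ and $A^{j-1}\subset D^{j-1}$, one produces a contracted rational curve $C$ with $C\cap A^{j-1}=\emptyset$ and $C\not\subset D^{j-1}$. Transporting $C$ through the isomorphism $X^{j-1}\setminus A^{j-1}\xrightarrow{\sim}U\subset X$ of \eqref{negativity1} to its strict transform $C^0\subset X$, and using that this isomorphism preserves both $K$ and $D$, one gets
\[(-K_{X^{j-1}}\cdot C)=(-(K_X+D)\cdot C^0)+(D\cdot C^0)\geq \iota+1\geq 2,\]
where $(-(K_X+D)\cdot C^0)\geq\iota\geq 1$ by definition of the log Fano pseudoindex, while $(D\cdot C^0)=(D^{j-1}\cdot C)\geq 1$ since $(D^{j-1}\cdot R^{j-1})>0$ and $D^{j-1}$ has integral coefficients. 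On the other hand $X^{j-1}$ is terminal with $\Sing(X^{j-1})\subset A^{j-1}$, so a general contracted curve passes through Gorenstein points, and Lemma \ref{ishii_lem} provides a fiber component $l$ through a Gorenstein point with $0<(-K_{X^{j-1}}\cdot l)\leq 1$, contradicting the bound above. \emph{This containment is the main obstacle}; the technical heart is ensuring a minimal contracted curve avoids the codimension-$\geq 2$ locus $A^{j-1}$ and handling fibers of dimension greater than one, where a length estimate replaces the direct application of Lemma \ref{ishii_lem}.

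Granting $\Exc(\pi_{j-1})\subset D^{j-1}$, the remaining parts of \eqref{negativity1} follow formally. Terminality of $X^j$ and the dlt property of $(X^j,D^j)$ are preserved because the step is a $K$-negative (resp.\ $(K_X+D)$-negative) divisorial contraction or flip, which is standard. The indeterminacy locus of $\sigma_{j-1}^{-1}$ lies over the center $\pi_{j-1}(\Exc(\pi_{j-1}))\subset\pi_{j-1}(D^{j-1})$, hence inside $D^j$; together with the strict transform of $A^{j-1}\subset D^{j-1}$ lying in $D^j$, this gives $A^j\subset D^j$. That $X^j\setminus A^j$ is isomorphic to an open subscheme of $X$, and hence $\Sing(X^j)\subset A^j$, is then immediate by composing the isomorphisms away from the exceptional loci; the bound $\dim A^j\leq n-2$ holds since a strict transform does not raise dimension and a new indeterminacy locus has codimension $\geq 2$, while $\dim A^j>0$ in the small case because the flipped locus is covered by rational curves.

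Finally I would establish \eqref{negativity2} by proving the one-step comparison and composing over the MMP. Resolving $\sigma_{j-1}$ by $W$ with morphisms $p\colon W\to X^{j-1}$ and $q\colon W\to X^j$, the negativity lemma gives $p^*(K_{X^{j-1}}+D^{j-1})-q^*(K_{X^j}+D^j)=\sum_i e_iF_i$ with all $e_i\geq 0$. For an irreducible curve $C\subset X^j$ not contained in $A^j$, with strict transforms $\tilde C$ in $X^{j-1}$ and $C_W$ in $W$, the projection formula yields
\[(-(K_{X^j}+D^j)\cdot C)-(-(K_{X^{j-1}}+D^{j-1})\cdot\tilde C)=\Bigl(\sum_i e_iF_i\cdot C_W\Bigr)\geq 0,\]
with strict inequality exactly when $C_W$ meets the support of $\sum_i e_iF_i$, i.e.\ when $C$ meets the newly modified locus. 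Feeding in the inductive inequality for $\tilde C$ on $X^{j-1}$ and tracking the step in which $C$ first meets $A^j$ then yields \eqref{negativity2}, with the strict inequality whenever $C\cap A^j\neq\emptyset$.
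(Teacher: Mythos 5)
Your proposal hinges on the containment $\Exc(\pi_{j-1})\subset D^{j-1}$, which you identify as the crucial step; but this containment is false in general, and it is also not what Lemma \ref{negativity} needs. Here is a counterexample already at the first step. Let $X=\Bl_p\pr^3$ with exceptional divisor $E$, and let $D$ be the strict transform of a plane passing through $p$, so $D\sim H-E$ where $H$ is the pullback of a hyperplane. Then $-(K_X+D)\sim 3H-E$ has degree $1$ on a line $e\subset E$ and degree $2$ on the strict transform $h-e$ of a line through $p$, hence is ample: $(X,D)$ is a log Fano manifold with $D\neq 0$. Since $(-D\cdot e)=-1<0$ while $(-D\cdot(h-e))=0$, the unique $(-D)$-negative extremal ray is $\R_{\geq 0}[e]$ (it is also $(K_X+D)$-negative), so every $(-D)$-MMP as in Notation \ref{logfano_mmp} begins with the blow-down $\pi_0\colon X\to\pr^3$, whose exceptional locus is $E\not\subset D$. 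Note that the conclusion of the lemma is perfectly consistent here: $A^1=\{p\}$ lies on the image plane $D^1$ --- but by a different mechanism than yours. Moreover your proposed derivation of the containment cannot be repaired: Lemma \ref{ishii_lem} applies only when every fiber of $\pi_{j-1}|_{\Exc(\pi_{j-1})}$ is one-dimensional, whereas in the example the fiber is $E\simeq\pr^2$; for higher-dimensional fibers there is no ``length estimate'' producing a contradiction, since contracted curves with $(-K_{X^{j-1}}\cdot C)=2$ (as above) are perfectly admissible.

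For comparison: the paper gives no proof of this lemma at all, deferring to Casagrande (\cite[Lemma 3.8]{cas1}), and the correct argument establishes only the weaker statement that the \emph{new} locus in $X^j$, namely the indeterminacy locus of $\sigma_{j-1}^{-1}$, lies in $D^j$. This splits into two cases with different reasons. If $\pi_{j-1}$ is small, the flipped curves $C'\subset X^j$ satisfy $(D^j\cdot C')<0$ because $-D^j$ is $\pi_{j-1}^+$-ample, hence $C'\subset D^j$ and the flipped locus is contained in $D^j$ (this is exactly the argument the paper uses inside the proof of Corollary \ref{codim_const}). If $\pi_{j-1}$ is divisorial with exceptional prime divisor $E^{j-1}$, then $(D^{j-1}\cdot R^{j-1})>0$ forces every positive-dimensional fiber to meet $D^{j-1}$, and even to meet $D^{j-1}-E^{j-1}$ when $E^{j-1}$ is a component of $D^{j-1}$, since $((D^{j-1}-E^{j-1})\cdot R^{j-1})>0$ as well; hence $\pi_{j-1}(E^{j-1})$ is contained in the strict transform $D^j$. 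This last point also exposes a secondary defect in your write-up: your inference ``$\pi_{j-1}(\Exc(\pi_{j-1}))\subset\pi_{j-1}(D^{j-1})$, hence inside $D^j$'' is not valid set-theoretically, because when a component of $D^{j-1}$ is contracted its image is \emph{not} part of the strict transform $D^j$. Once the containment of the indeterminacy locus of $\sigma_{j-1}^{-1}$ in $D^j$ is established correctly, the rest of your outline (the bound $\dim A^j\leq n-2$, the isomorphism off $A^j$ and $\Sing(X^j)\subset A^j$, preservation of terminal and dlt, and the discrepancy comparison via the negativity lemma for part (2)) is along the right lines.
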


The next proposition is the key of this article.

\begin{proposition}[cf. {\cite[Lemma 2.6]{cas2}}]\label{fund_prop}
Under Notation \ref{logfano_mmp}, we have the following properties:
\begin{enumerate}
\renewcommand{\theenumi}{\arabic{enumi}}
\renewcommand{\labelenumi}{(\theenumi)}
\item\label{fund_prop1}
For any $0\leq j\leq k$, the divisor $D^j$ is nonzero effective. 
In particular, this MMP ends with a fiber type contraction. That is, 
there exists a fiber type extremal contraction 
$X^k\xrightarrow{\pi_k}Y^k$ associated to the extremal ray $R^k\subset\NE(X^k)$ 
such that $(D^k\cdot R^k)>0$ and $((K_{X^k}+D^k)\cdot R^k)<0$ holds. 
The restriction morphism $\pi_k|_{D^k}\colon D^k\rightarrow Y^k$ is surjective. 
\item\label{fund_prop2}
The restriction morphism $\pi_j|_{D_i^j}\colon D_i^j\rightarrow\pi_j(D_i^j)$ to its image 
is an algebraic fiber space, that is, $(\pi_j|_{D_i^j})_*\sO_{D_i^j}=\sO_{\pi_j(D_i^j)}$, 
for any $1\leq i\leq m$ and $0\leq j\leq k$. 
\item\label{fund_prop3}
There exists an irreducible curve $C^j\subset D^j$ such that $\pi_j(C^j)$ is a point 
for any $0\leq j\leq k-1$. 
\item\label{fund_prop4}
If the restriction morphism $\pi_k|_{D^k}\colon D^k\rightarrow Y^k$ is a finite morphism, 
then $k=0$ and the morphism $(\pi_0=)\pi_k\colon X^k\rightarrow Y^k$ 
is a $\pr^1$-bundle and $(D=)D^k$ is a section of $\pi_k$. 
\item\label{fund_prop5}
If the log Fano pseudoindex $\iota$ of $(X, D)$ satisfies $\iota\geq 2$, then 
$\dim Y^k\leq n-2$ holds. 
\end{enumerate}
\end{proposition}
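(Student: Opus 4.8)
The plan is to prove the five assertions of Proposition~\ref{fund_prop} by running through the structural information supplied by Lemma~\ref{negativity} and the setup of Notation~\ref{logfano_mmp}, exploiting that the chosen MMP is simultaneously a $(-D)$-MMP and a $(K_X+D)$-MMP. I would establish \eqref{fund_prop1} first, since it drives everything else. The key point is that each step $\sigma_i$ is $(-D^i)$-negative, i.e.\ $(D^i\cdot R^i)>0$, so whatever curves in $R^i$ the map contracts must meet $D^i$; combined with Lemma~\ref{negativity}\eqref{negativity1}, which tells us $\Sing(X^j)\subset A^j\subset D^j$ and that $A^j$ has positive dimension whenever $\pi_{j-1}$ is small, this should force $D^j$ to remain a nonzero effective divisor at every stage (the strict transform of an effective divisor stays effective, and the $(-D)$-negativity prevents $D$ from being entirely contracted or flipped away). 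Since the MMP runs on $-D^j$ and $D^j\neq 0$ throughout, it cannot terminate with $D^k$ nef (a nonzero effective $-D^k$ nef on a $\mathbb{Q}$-factorial projective variety would be the obstruction to rule out), so by Proposition~\ref{mmp_prop}(iii) it must end with a fiber type contraction $\pi_k$ on which $(D^k\cdot R^k)>0$ and $((K_{X^k}+D^k)\cdot R^k)<0$; surjectivity of $\pi_k|_{D^k}$ then follows because $D^k$ meets every fiber.

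Next I would prove \eqref{fund_prop2} and \eqref{fund_prop3} together. For \eqref{fund_prop2}, the statement that $\pi_j|_{D_i^j}$ is an algebraic fiber space should follow from the fact that each $D_i^j$ is (by Theorem~\ref{fujino_thm} applied to the dlt structure in Lemma~\ref{negativity}\eqref{negativity2}, or directly to the normal irreducible component) a normal variety with mild singularities together with a Stein factorization argument: one shows $\pi_j(D_i^j)$ is normal and uses that the general fiber is connected, so $(\pi_j|_{D_i^j})_*\mathcal{O}=\mathcal{O}$. For \eqref{fund_prop3}, since $\pi_j$ for $j\leq k-1$ is birational and $(D^j\cdot R^j)>0$, some component $D_i^j$ must be positive on $R^j$, hence contains a curve $C^j$ contracted by $\pi_j$; this is essentially a restatement of the $(-D)$-negativity at each birational step.

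For \eqref{fund_prop4}, I would argue that if $\pi_k|_{D^k}$ is finite then the fibers of $\pi_k$ meet $D^k$ in finitely many points, so by \eqref{fund_prop1} (surjectivity) and the length/dimension estimates on fiber type contractions the fibers must be one-dimensional; Lemma~\ref{ishii_lem}-type length considerations together with $((K_{X^k}+D^k)\cdot R^k)<0$ and $\iota$-bounds should pin down $\pi_k$ as a $\mathbb{P}^1$-bundle with $D^k$ a section, and finiteness of $\pi_k|_{D^k}$ with $D^k$ a section forces $A^k=\emptyset$ (no contracted curve lies in $D^k$), which by Lemma~\ref{negativity}\eqref{negativity1} forces $k=0$ and $X^k=X$. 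Finally, \eqref{fund_prop5} is the payoff of the pseudoindex hypothesis: if $\iota\geq 2$ then every rational curve $C$ satisfies $(-(K_X+D)\cdot C)\geq 2$, so by Lemma~\ref{negativity}\eqref{negativity2} the curves in the fibers of $\pi_k$ have log-degree $\geq 2$; a fiber type contraction whose fibers carry $(K_{X^k}+D^k)$-negative curves of length $\geq 2$ must have fibers of dimension $\geq 2$ (the boundary-length estimate forces $\dim(\text{fiber})\geq \iota\geq 2$ via the inequality relating fiber dimension, length, and the $-(K+D)$-degree), whence $\dim Y^k\leq n-2$; combined with \eqref{fund_prop4}, the finiteness case is excluded and the fiber type contraction genuinely drops dimension.

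I expect the main obstacle to be \eqref{fund_prop1}, specifically proving that $D^j$ stays \emph{nonzero} after a divisorial contraction: one must rule out the possibility that the contracted divisor $\Exc(\pi_{j-1})$ equals (a component of) $D^{j-1}$. The resolution should come from the $(-D)$-negativity condition $(D^{j-1}\cdot R^{j-1})>0$, which says $D^{j-1}$ is \emph{positive} on the contracted ray and hence is not itself the contracted divisor (a contracted divisor is negative on its ray); this is exactly the mechanism Casagrande uses in \cite[Lemma~2.6]{cas2} and I would follow that argument closely, adapting the role of $-K_X$ to $-(K_X+D)$ and tracking the boundary $D$ throughout. The second delicate point is the precise length estimate in \eqref{fund_prop5}, where I would invoke the standard bound that for a fiber type extremal contraction the dimension of a general fiber $F$ satisfies $\dim F\geq l(R^k)-1$ together with $l(R^k)\geq \iota\geq 2$ (using Lemma~\ref{negativity}\eqref{negativity2} to transfer the pseudoindex bound to $X^k$), giving $\dim F\geq 1$ but needing the refined boundary-sensitive version to reach $\dim F\geq 2$ and hence $\dim Y^k\leq n-2$.
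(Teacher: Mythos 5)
Your treatment of part \eqref{fund_prop3} contains a genuine error, and that part is the technical heart of the proposition. You argue that since $(D^j\cdot R^j)>0$, some component $D_i^j$ ``must be positive on $R^j$, hence contains a curve $C^j$ contracted by $\pi_j$.'' This inference is false: a divisor that is \emph{positive} on an extremal ray meets every contracted curve but need not contain any of them. For instance, on $\Bl_p\pr^2$ the strict transform of a line through $p$ is positive on the ray spanned by the exceptional curve, yet contains no contracted curve; it is the exceptional divisor itself, which is \emph{negative} on the ray, that contains them. Indeed, positivity of $D^j$ on $R^j$ is exactly what you (correctly) used in \eqref{fund_prop1} to show $D^j$ is \emph{not} contracted, so it cannot simultaneously force $D^j$ to contain contracted curves. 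Positivity does give the statement when some nontrivial fiber has dimension $\geq 2$ (then its intersection with the $\Q$-Cartier divisor $D^j$ contains a curve), but the hard case is when all nontrivial fibers of $\pi_j$ are one-dimensional, and there the paper's proof is a real argument by contradiction: the fiber components $l^j$ avoid $A^j$ (as $A^j\subset D^j$ and $\pi_j|_{D^j}$ is assumed finite), Ishii's Lemma (Lemma \ref{ishii_lem}) gives $(-K_{X^j}\cdot l^j)\leq 1$, and then Lemma \ref{negativity} \eqref{negativity2} yields $(-(K_X+D)\cdot l)\leq(-K_{X^j}\cdot l^j)-(D^j\cdot l^j)<1$ for the strict transform $l\subset X$, contradicting the fact that $-(K_X+D)$ is an ample \emph{Cartier} divisor. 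The integrality of $-(K_X+D)$ is essential, and nothing of this sort appears in your sketch.

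The remaining parts also have gaps, though less fundamental. In \eqref{fund_prop2}, ``Stein factorization plus connectedness of the general fiber'' is circular: connectedness of the fibers of $\pi_j|_{D_i^j}$ is precisely what must be proved. The paper gets it from relative vanishing: $(X^j,\sum_{i'\neq i}D^j_{i'})$ is dlt and $-D_i^j-(K_{X^j}+\sum_{i'\neq i}D^j_{i'})=-(K_{X^j}+D^j)$ is $\pi_j$-ample, so $R^1(\pi_j)_*\sO_{X^j}(-D_i^j)=0$ and $(\pi_j)_*\sO_{X^j}\to(\pi_j|_{D_i^j})_*\sO_{D_i^j}$ is surjective. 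In \eqref{fund_prop4}, your claim that finiteness of $\pi_k|_{D^k}$ forces $A^k=\emptyset$ is unjustified: $A^k$ could a priori be a nonempty finite set of points inside $D^k$, which contradicts nothing about finiteness. The paper instead assumes $k\geq 1$ (so $A^k\neq\emptyset$), picks a fiber $l_0^k$ meeting $A^k$, uses Koll\'ar's theorem to get $(-(K_{X^k}+D^k)\cdot l_0^k)=1$, and again contradicts integrality of $-(K_X+D)$ on the strict transform, using the \emph{strict} inequality in Lemma \ref{negativity} \eqref{negativity2} because the chosen component meets $A^k$; only then does Fujita's lemma give the $\pr^1$-bundle structure on $X=X^0$. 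In \eqref{fund_prop5} you concede that the Wi\'sniewski-type bound only yields $\dim F\geq 1$ and appeal to an unspecified ``refined boundary-sensitive version''; the paper's actual argument is elementary and does not need such a refinement: if $\dim Y^k=n-1$, a general fiber $l^k$ avoids $A^k$, lies in the locus where $X^k$ is smooth and $D^k$ is Cartier, and is a $\pr^1$ with $(-K_{X^k}\cdot l^k)=2$, so $(-(K_{X^k}+D^k)\cdot l^k)\leq 1<2$, and Lemma \ref{negativity} \eqref{negativity2} then contradicts $\iota\geq 2$ on the strict transform. Only your part \eqref{fund_prop1} matches the paper's proof.
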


\begin{proof}
\eqref{fund_prop1}
We prove by induction on $j$ to prove that the divisor $D^j$ is nonzero effective. 
The case $j=0$ is trivial. Assume that $j\geq 1$ and $D^{j-1}$ is nonzero effective. 
We assume that $D^j$ is not nonzero effective. 
Then $D^{j-1}$ is a prime divisor and $\pi_{j-1}$ is a divisorial contraction 
which contracts $D^{j-1}$, but this leads to a contradiction 
since $(D^{j-1}\cdot R^{j-1})>0$. Thus $D^j$ is nonzero effective for any 
$0\leq j\leq k$. Since $D^k$ is nonzero effective, $-D^k$ cannot be nef. 
Therefore this MMP ends with a fiber type contraction. 
We also know that the restriction morphism $\pi_k|_{D^k}\colon D^k\rightarrow Y^k$ 
is surjective since any fiber and $D^k$ intersect with each other. 

\eqref{fund_prop2}
It is enough to show that the homomorphism 
$(\pi_j)_*\sO_{X^j}\rightarrow(\pi_j|_{D_i^j})_*\sO_{D_i^j}$ is surjective. 
We know that the sequence 
\[
(\pi_j)_*\sO_{X^j}\rightarrow(\pi_j|_{D_i^j})_*\sO_{D_i^j}
\rightarrow R^1(\pi_j)_*\sO_{X^j}(-D_i^j)
\]
is exact. Since the pair $(X^j, D^j)$ is a $\Q$-factorial dlt pair 
by Lemma \ref{negativity} \eqref{negativity2}, 
we know that the pair $(X^j, \sum_{i'\neq i}D_{i'}^j)$ is also a dlt pair by 
\cite[Corollary 2.39]{KoMo}. 
Since $-D_i^j-(K_{X^j}+\sum_{i'\neq i}D_{i'}^j)=-(K_{X^j}+D^j)$ is $\pi_j$-ample, 
we have $R^1(\pi_j)_*\sO_{X^j}(-D_i^j)=0$ by a vanishing theorem 
(see for example \cite[Theorem 2.42]{fujino}). Therefore 
the restriction morphism $\pi_j|_{D_i^j}\colon D_i^j\rightarrow\pi_j(D_i^j)$ to its image 
is an algebraic fiber space. 

\eqref{fund_prop3}
Assume that the restriction morphism $\pi_j|_{D^j}\colon D^j\rightarrow Y^j$ is 
a finite morphism for some $0\leq j\leq k-1$. 
Let $F^j$ ba an arbitrary nontrivial fiber of $\pi_j$. 
Then $F^j$ and $D^j$ intersect with each other since $(D^j\cdot R^j)>0$. 
If $\dim F^j\geq 2$, then $\dim(F^j\cap D^j)\geq 1$
since $D^j$ is a $\Q$-Cartier divisor. This is a contradiction to the assumption 
that $\pi_j|_{D^j}$ is a finite morphism. 
Therefore $\dim F^j=1$ for any nontrivial fiber of $\pi_j$. 
Let $l^j\subset F^j$ be an arbitrary irreducible component. Then $l^j\not\subset A^j$ 
since $A^j\subset D^j$ by Lemma \ref{negativity} \eqref{negativity1}, 
and $(D^j\cdot l^j)>0$ by the property $(D^j\cdot R^j)>0$. 
Hence we can apply Lemma \ref{ishii_lem}; we have $(-K_{X^j}\cdot l^j)\leq 1$. 
Let $l\subset X$ be the strict transform of $l^j\subset X^j$. Then 
\[
(-(K_X+D)\cdot l)\leq(-(K_{X^j}+D^j)\cdot l^j)=(-K_{X^j}\cdot l^j)-(D^j\cdot l^j)<1
\]
holds by Lemma \ref{negativity} \eqref{negativity2}. This leads to a contradiction 
since $-(K_X+D)$ is an ample Cartier divisor. Therefore 
the restriction morphism $\pi_j|_{D^j}\colon D^j\rightarrow Y^j$ is not 
a finite morphism for any $0\leq j\leq k-1$. 

\eqref{fund_prop4}
We have $\dim Y^k=n-1$ by \eqref{fund_prop1}. 
If there exists a fiber $F^k\subset X^k$ of $\pi_k$ such that $\dim F^k\geq 2$, 
then $\dim(D^k\cap F^k)\geq 1$ holds. This leads to a contradiction 
since $\pi_k|_{D^k}$ is a finite morphism. Thus any fiber of $\pi_k$ is of dimension one. 
We can take a general smooth fiber $l^k\subset X^k$ of $\pi_k$ such that 
$l^k\cap A^k=\emptyset$. Since $(-(K_{X^k}+D^k)\cdot R^k)>0$, $(D^k\cdot R^k)>0$ 
and $l^k\cap\Sing(X^k)=\emptyset$ 
(hence $D^k$ and $K_{X^k}$ is Cartier around $l^k$), 
we have $l^k\simeq\pr^1$, $(-K_{X^k}\cdot l^k)=2$ and $(D^k\cdot l^k)=1$. 
We assume that $k\geq 1$. Then $A^k\neq\emptyset$ holds. 
Let $l_0^k\subset X^k$ be a fiber of $\pi_k$ such that $l_0^k\cap A^k\neq\emptyset$ 
holds. We know that $(-(K_{X^k}+D^k)\cdot l_0^k)=1$ by \cite[Theorem 1.3.17]{kollar}. 
We note that any arbitrary irreducible component $l_1^k$ of $l_0^k$ satisfies 
$l_1^k\not\subset A^k$ since $l^k\not\subset D^k$and 
$A^k\subset D^k$ holds by Lemma \ref{negativity} \eqref{negativity1}. 
Let $l_1^k\subset l_0^k$ be an irreducible component such that 
$l_1^k\cap A^k\neq\emptyset$ holds, and let $l_1\subset X$ be the strict transform 
of $l_1^k\subset X^k$. Then we have 
\[
(-(K_X+D)\cdot l_0)<(-(K_{X^k}+D^k)\cdot l_0^k)\leq 1
\]
by Lemma \ref{ishii_lem}. However, this leads to a contradiction 
since $-(K_X+D)$ is an ample Cartier divisor. 
Hence $k=0$ holds. 
Thus the morphism $\pi_0=\pi_k\colon X\rightarrow Y^0$ satisfies that 
$\dim F^0=1$ for any fiber of $\pi_0$, and for general fiber $l^0\subset X$, 
we have $(-K_X\cdot l^0)=2$ and $(D\cdot l^0)=1$. 
Therefore $\pi_0$ is a $\pr^1$-bundle and $D$ is a section of $\pi_0$ by 
\cite[Lemma 2.12]{fujita}. 

\eqref{fund_prop5}
Assume that $\dim Y^k=n-1$. Then a general fiber $l^k\subset X^k$ of $\pi^k$ 
satisfies that $l^k\cap A^k=\emptyset$, $l^k\simeq\pr^1$ and $(-K_{X^k}\cdot l^k)=2$ 
by the same argument of the proof of \eqref{fund_prop4}. Thus we have 
\[
(-(K_X+D)\cdot l)\leq(-(K_{X^k}+D^k)\cdot l^k)<2,
\]
where $l\subset X$ is the strict transform of $l^k\subset X^k$, 
by Lemma \ref{ishii_lem} and the property $(D^k\cdot l^k)>0$. 
This contradict to the property $\iota\geq 2$. Therefore $\dim Y^k\leq n-2$ holds. 
\end{proof}

\begin{corollary}[cf. {\cite[Lemma 3.6]{cas1}}]\label{codim_const}
Under Notation \ref{logfano_mmp}, we have the following results: 
\begin{enumerate}
\renewcommand{\theenumi}{\arabic{enumi}}
\renewcommand{\labelenumi}{(\theenumi)}
\item\label{codim_const1}
The equality $\rho(X)-\dim\NC(D, X)=\rho(X^j)-\dim\NC(D^j, X^j)$ 
holds for any $0\leq j\leq k$. 
\item\label{codim_const2}
We have $\rho(X)-\dim\NC(D, X)=0$ or $1$. If $\rho(X)-\dim\NC(D, X)=1$, 
then $k=0$, the morphism $\pi_0\colon X\rightarrow Y^0$ 
is a $\pr^1$-bundle and $D$ is a section of $\pi_0$. 
\end{enumerate}
\end{corollary}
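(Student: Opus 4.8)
The plan is to track how the map $\NC(D^j,X^j)\hookrightarrow\NC(X^j)$ behaves along each step of the MMP, and show its cokernel dimension is a birational invariant of the process, so that computing it at the final fiber-type contraction $X^k\to Y^k$ settles the matter. For part \eqref{codim_const1}, I would argue step by step, comparing $X^j$ with $X^{j+1}$ across the single elementary transformation $\sigma_j\colon X^j\dashrightarrow X^{j+1}$ factoring through $Y^j$. The key is that both $\pi_j$ and $\pi_j^+$ are contractions of a single extremal ray, so $\rho(X^j)=\rho(Y^j)+1=\rho(X^{j+1})$ in both the divisorial and the small (flip) cases. The real content is to check that the \emph{relative} statement matches: I want $\dim\NC(D^j,X^j)$ to drop by exactly $1$ when $\rho$ drops by $1$ (divisorial case) and stay constant (flip case), precisely mirroring $\rho$, so that the difference $\rho(X^j)-\dim\NC(D^j,X^j)$ is unchanged.

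Concretely, I would push forward curve classes via $(\pi_j)_*\colon\NC(X^j)\to\NC(Y^j)$, whose kernel is the ray $R^j$. The crucial input is Proposition~\ref{fund_prop}\eqref{fund_prop3}: there is an irreducible curve $C^j\subset D^j$ contracted by $\pi_j$, i.e.\ with $[C^j]\in R^j$ and $[C^j]\in\NC(D^j,X^j)$. This means $R^j$ already lies inside $\NC(D^j,X^j)$, so the kernel ray of $(\pi_j)_*$ is ``seen'' by $D^j$. Hence when I contract, both $\NC(X^j)$ and its subspace $\NC(D^j,X^j)$ lose the same one dimension spanned by $R^j$, and on the target $Y^j$ the images $\NC(D^j,X^j)/\langle R^j\rangle\hookrightarrow\NC(Y^j)$ have equal codimension. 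I then need the same comparison on the $X^{j+1}$ side of the flip/divisorial contraction, using $(\pi_j^+)_*$ and the fact that $D^{j+1}$ is the strict transform of $D^j$ (so that $\NC(D^{j+1},X^{j+1})$ likewise contains the contracted ray of $\pi_j^+$ by the analogous curve-existence statement applied to the reversed contraction); matching both sides through $Y^j$ gives the invariance.

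For part \eqref{codim_const2}, I would evaluate the invariant at the end of the MMP, where $X^k\xrightarrow{\pi_k}Y^k$ is fiber type with $(D^k\cdot R^k)>0$. By \eqref{codim_const1} it suffices to compute $\rho(X^k)-\dim\NC(D^k,X^k)$. Since $\pi_k$ is of fiber type, $\rho(X^k)=\rho(Y^k)+1$. The restriction $\pi_k|_{D^k}\colon D^k\to Y^k$ is surjective by \eqref{fund_prop1}, and I would use \eqref{fund_prop2} (that it is an algebraic fiber space on each component, hence has connected fibers) to control $\NC(D^k,X^k)\to\NC(Y^k)$: its image is all of $\NC(Y^k)$. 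The only question is whether $\NC(D^k,X^k)$ also contains the fiber ray $R^k$, which happens exactly when a fiber of $\pi_k$ meets $D^k$ in a curve, i.e.\ when $\pi_k|_{D^k}$ is \emph{not} finite. In the non-finite case $R^k\in\NC(D^k,X^k)$ so $\dim\NC(D^k,X^k)=\rho(X^k)$ and the invariant is $0$; in the finite case $R^k\notin\NC(D^k,X^k)$, the invariant is $1$, and \eqref{fund_prop4} forces $k=0$ with $\pi_0$ a $\pr^1$-bundle and $D$ a section.

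The main obstacle I anticipate is the bookkeeping at the \emph{small} (flip) steps in \eqref{codim_const1}: here $\pi_j$ contracts no divisor but $\dim A^j>0$ (by Lemma~\ref{negativity}\eqref{negativity1}), the variety acquires terminal singularities, and $\NC(D^j,X^j)$ must be compared across a map that is an isomorphism in codimension one but not a morphism. I would handle this by working with the common contraction $Y^j$ and pushing forward from both sides, verifying that the contracted curves on each side lie in the respective $\NC(D^{\bullet},X^{\bullet})$ via \eqref{fund_prop3} and its analogue for $\pi_j^+$, and that no curve class outside $R^j$ is gained or lost — this is where Casagrande's argument in \cite[Lemma 3.6]{cas1} is the right template, and where I would lean on the strict-transform compatibility of $D^j$ with $D^{j+1}$ together with the terminality from Lemma~\ref{negativity}\eqref{negativity2}.
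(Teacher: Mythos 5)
Your overall route coincides with the paper's: use Proposition~\ref{fund_prop}~\eqref{fund_prop3} to see that the contracted ray lies in $\NC(D^j,X^j)$, push everything forward through the common target $Y^j$, and at the end combine the surjectivity of $\pi_k|_{D^k}$ with Proposition~\ref{fund_prop}~\eqref{fund_prop4}. However, the one step you yourself flag as the main obstacle is justified by an argument that would fail. You claim that the contracted ray of $\pi_j^+$ lies in $\NC(D^{j+1},X^{j+1})$ ``by the analogous curve-existence statement applied to the reversed contraction,'' i.e.\ by an analogue of \eqref{fund_prop3} for $\pi_j^+$. No such analogue is available, and the proof of \eqref{fund_prop3} does not transfer: that proof combines Ishii's Lemma~\ref{ishii_lem} with the positivity $(D^j\cdot R^j)>0$ and Lemma~\ref{negativity}~\eqref{negativity2} to contradict the ampleness of the Cartier divisor $-(K_X+D)$, whereas on the flipped side the intersection of contracted curves with $D^{j+1}$ has the \emph{opposite} sign, so the same chain of inequalities produces no contradiction. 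That sign reversal is precisely what the paper exploits instead: since the MMP is a $(-D)$-MMP, the flip $\pi_j^+$ is by definition the small contraction for which $-D^{j+1}$ is $\pi_j^+$-ample; hence \emph{every} curve contracted by $\pi_j^+$ satisfies $(D^{j+1}\cdot C)<0$ and therefore lies in $\operatorname{Supp}(D^{j+1})$. This one-line observation gives the (stronger) fact your dimension count needs, with no curve-existence argument, no Ishii lemma, and no appeal to terminality or to strict-transform bookkeeping beyond $\pi_j^+(D^{j+1})=\pi_j(D^j)$.

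Two smaller points. First, your sentence ``$\rho(X^j)=\rho(Y^j)+1=\rho(X^{j+1})$ in both the divisorial and the small (flip) cases'' is wrong in the divisorial case: there $\pi_j^+$ is the identity, $X^{j+1}=Y^j$, and $\rho(X^{j+1})=\rho(X^j)-1$; your very next sentence handles the divisorial case correctly ($\rho$ and $\dim\NC(D^j,X^j)$ both drop by one, the latter because $R^j\subset\NC(D^j,X^j)$ by \eqref{fund_prop3}), so this is an internal contradiction of wording rather than of substance, but it should be fixed. Second, in part~(2) your assertion that finiteness of $\pi_k|_{D^k}$ implies $R^k\notin\NC(D^k,X^k)$ is neither immediate (a priori the fiber class could lie in the linear span of classes of non-contracted curves in $D^k$) nor needed: the implication your argument actually uses --- if the invariant equals $1$ then $\pi_k|_{D^k}$ is finite --- follows, exactly as in the paper, by contraposition from your correct observation that non-finiteness forces the invariant to be $0$ (also, for the surjectivity of $\NC(D^k,X^k)\rightarrow\NC(Y^k)$ you only need surjectivity of $\pi_k|_{D^k}$ from \eqref{fund_prop1}, not the fiber-space property \eqref{fund_prop2}). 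With the flip step repaired as above, your proof becomes the paper's proof.
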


\begin{proof}
\eqref{codim_const1}
We prove by induction on $j$. The case $j=0$ is obvious. 
We consider the case $1\leq j\leq k$. It is enough to show the equality 
$\rho(X^{j-1})-\dim\NC(D^{j-1}, X^{j-1})=\rho(X^j)-\dim\NC(D^j, X^j)$. 
We know that $\dim\NC(\pi_{j-1}(D^{j-1}), Y^{j-1})=\dim\NC(D^{j-1}, X^{j-1})-1$ by 
Proposition \ref{fund_prop} \eqref{fund_prop3}. 

If $\pi_{j-1}$ is small, then any curve in $X^j$ that is contracted by $\pi_{j-1}^+$
is in $D^j$ since $-D^j$ is $(\pi_{j-1}^+)$-ample. 
Hence $\dim\NC(\pi_{j-1}(D^{j-1}), Y^{j-1})=\dim\NC(D^j, X^j)-1$. 
Therefore  $\rho(X^{j-1})-\dim\NC(D^{j-1}, X^{j-1})=\rho(X^j)-\dim\NC(D^j, X^j)$ 
holds since $\rho(X^{j-1})=\rho(X^j)$. 

If $\pi_{j-1}$ is divisorial, then $\sigma_{j-1}=\pi_{j-1}$ and 
$\rho(X^j)=\rho(X^{j-1})-1$ holds. Therefore 
$\rho(X^{j-1})-\dim\NC(D^{j-1}, X^{j-1})=\rho(X^j)-\dim\NC(D^j, X^j)$ holds. 

\eqref{codim_const2}
The value $\rho(X^k)-\dim\NC(D^k, X^k)$ is equal to $0$ or $1$ 
since the restriction morphism $\pi_k|_{D^k}\colon D^k\rightarrow Y^k$ is surjective 
and the dimension of the kernel of the surjection 
$(\pi_k)_*\colon\NC(X^k)\rightarrow\NC(Y^k)$ is one. 
If $\rho(X^k)-\dim\NC(D^k, X^k)=1$, then the restriction homomorphism 
$\NC(D^k, X^k)\rightarrow\NC(Y^k)$ is isomorphism. Thus any curve in $D^k$ 
cannot be contracted. Hence the assertion holds by Proposition \ref{fund_prop} 
\eqref{fund_prop4}. 
\end{proof}

As an immediate corollary, we get the following theorem. 

\begin{thm}[Main Theorem]\label{mainthm}
Let $(X, D)$ be an $n$-dimensional log Fano manifold with $D\neq 0$. 
Then one of the following holds: 
\begin{enumerate}
\renewcommand{\theenumi}{\arabic{enumi}}
\renewcommand{\labelenumi}{(\theenumi)}
\item\label{mainthm1}
The restriction homomorphism $\Pic(X)\rightarrow\Pic(D)$ is injective. 
\item\label{mainthm2}
$X$ admits a $\pr^1$-bundle structure $\pi\colon X\rightarrow Y$ and $D$ is a section 
of $\pi$. In particular, $D$ is irreducible and isomorphic to $Y$ $($hence $Y$ is an 
$(n-1)$-dimensional Fano manifold and the log Fano pseudoindex of $(X, D)$ is equal to one$)$. 
\end{enumerate}
\end{thm}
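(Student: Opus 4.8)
The plan is to deduce the theorem from Corollary \ref{codim_const} \eqref{codim_const2} by translating the numerical invariant $\rho(X)-\dim\NC(D,X)$ into a statement about the restriction $\Pic(X)\to\Pic(D)$. Concretely, I would establish the dictionary that the restriction $\Pic(X)\to\Pic(D)$ is injective if and only if $\dim\NC(D,X)=\rho(X)$, i.e. exactly when $\rho(X)-\dim\NC(D,X)=0$. Granting this, Corollary \ref{codim_const} \eqref{codim_const2} splits cleanly into the two alternatives: the value $0$ yields injectivity, which is conclusion \eqref{mainthm1}, while the value $1$ forces $k=0$ together with the $\pr^1$-bundle $\pi_0\colon X\to Y^0$ admitting $D$ as a section, which is conclusion \eqref{mainthm2}.

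The core of the argument is this dictionary, which I would set up through the perfect duality $\ND(X)=\NC(X)^\vee$. For $L\in\Pic(X)$ I claim that $L$ lies in $\ker(\Pic(X)\to\Pic(D))$ precisely when its numerical class satisfies $[L]\in\NC(D,X)^\perp$, the annihilator being taken inside $\ND(X)$. One inclusion is immediate: if $L|_D\cong\sO_D$ then $(L\cdot C)=0$ for every curve $C\subset D$, so $[L]$ annihilates $\NC(D,X)$.

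The converse inclusion is the step I expect to be the main obstacle, since it requires upgrading numerical triviality on $D$ to genuine triviality, and $D$ need not be irreducible. Here I would use that $D$ is a connected snc Fano variety (Theorem \ref{fujino_thm}) with irreducible decomposition $D=\sum_{i=1}^m D_i$, each component carrying its conductor as a log Fano manifold. If $[L]\in\NC(D,X)^\perp$, then $(L\cdot C)=0$ for every curve $C\subset D_i$, so $L|_{D_i}$ is numerically trivial on each $D_i$; since the Picard group of a log Fano manifold is torsion free and a numerically trivial line bundle on it is trivial (Lemma \ref{tor_free}), we get $L|_{D_i}\cong\sO_{D_i}$ for all $i$. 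Feeding $(L|_{D_i})_i=0$ into the injective restriction $\eta\colon\Pic(D)\hookrightarrow\bigoplus_i\Pic(D_i)$ of Proposition \ref{fanopicglue} yields $L|_D\cong\sO_D$, establishing the converse. Thus $\ker(\Pic(X)\to\Pic(D))$ is identified with $\{L\in\Pic(X):[L]\in\NC(D,X)^\perp\}$; since $\Pic(X)\to\ND(X)$ is injective by Lemma \ref{tor_free} and $\NC(D,X)^\perp$ has dimension $\rho(X)-\dim\NC(D,X)$, the restriction is injective exactly when this difference vanishes.

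Finally I would assemble the two cases. When $\rho(X)-\dim\NC(D,X)=0$ the dictionary gives alternative \eqref{mainthm1} directly. When the difference equals $1$, Corollary \ref{codim_const} \eqref{codim_const2} supplies the $\pr^1$-bundle $\pi=\pi_0\colon X\to Y:=Y^0$ with $D$ a section, giving alternative \eqml{mainthm2}; it then remains to record the stated consequences. Being a section, $D$ maps isomorphically onto $Y$ and is in particular irreducible; adjunction gives $-K_D=-(K_X+D)|_D$, the restriction of an ample divisor, so $D\cong Y$ is an $(n-1)$-dimensional Fano manifold; and for a fiber $\ell\cong\pr^1$ of $\pi$ one computes $(-(K_X+D)\cdot\ell)=(-K_X\cdot\ell)-(D\cdot\ell)=2-1=1$, so the log Fano pseudoindex of $(X,D)$ equals one. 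This completes the plan.
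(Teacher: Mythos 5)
Your proposal is correct and follows essentially the same route as the paper: both reduce the theorem to Corollary \ref{codim_const} \eqref{codim_const2}, and both settle the case $\rho(X)-\dim\NC(D,X)=0$ by combining the injectivity of $\Pic(X)\rightarrow\ND(X)$ (Lemma \ref{tor_free}) with numerical duality. Note only that the ``converse inclusion'' you single out as the main obstacle (numerical triviality of $L|_{D_i}$ implies triviality, glued via Proposition \ref{fanopicglue}) is never actually needed for the theorem: alternative \eqref{mainthm1} uses only the easy inclusion $L|_D\cong\sO_D\Rightarrow [L]\in\NC(D,X)^{\perp}=0$, which is precisely the paper's shorter argument that surjectivity of $\NC(D)\rightarrow\NC(X)$ dualizes to injectivity of $\ND(X)\rightarrow\ND(D)$.
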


\begin{proof}
If $\rho(X)-\dim\NC(D, X)=1$, then \eqref{mainthm2} holds 
by Corollary \ref{codim_const} \eqref{codim_const2}. 
If $\rho(X)-\dim\NC(D, X)=0$, then the homomorphism 
$\NC(D)\rightarrow\NC(X)$ is surjective. Hence the dual homomorphism 
$\ND(X)\rightarrow\ND(D)$ is injective. We know that the canonical homomorphism 
$\Pic(X)\rightarrow\ND(X)$ is injective by Lemma \ref{tor_free}, hence the 
homomorphism $\Pic(X)\rightarrow\Pic(D)$ is injective. 
\end{proof}

As a corollary, we get the following property which is important to classify 
higher dimensional log Fano manifolds with the log Fano pseudoindices $\geq 2$. 

\begin{corollary}\label{piccor}
\begin{enumerate}
\renewcommand{\theenumi}{\arabic{enumi}}
\renewcommand{\labelenumi}{(\theenumi)}
\item\label{piccor1}
Let $(X, D)$ be a log Fano manifold with $D\neq 0$ and 
the log Fano pseudoindex $\geq 2$. Then the restriction homomorphism 
$\Pic(X)\rightarrow\Pic(D_1)$ is injective 
for any irreducible component $D_1\subset D$.
\item\label{piccor2}
Let $\sX$ be an snc Fano variety with the snc Fano pseudoindex $\geq 2$. 
Then the restriction homomorphism 
$\Pic(\sX)\rightarrow\Pic(X_1)$ is injective for any 
irreducible component $X_1\subset\sX$.
\end{enumerate}
\end{corollary}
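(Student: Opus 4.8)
The plan is to deduce \eqref{piccor1} from the Main Theorem (Theorem \ref{mainthm}) together with \eqref{piccor2}, and to prove \eqref{piccor2} by induction on the number $m$ of irreducible components of $\sX$, where the inductive step feeds back into \eqref{piccor1} applied to pairs whose boundary has strictly fewer components. Concretely, I would run a single induction on $m$, establishing simultaneously \eqref{piccor1} for log Fano manifolds $(X,D)$ whose $D$ has at most $m$ components and \eqref{piccor2} for snc Fano varieties with at most $m$ components; the two reductions below keep the induction well-founded.

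For the reduction of \eqref{piccor1}, let $(X,D)$ be a log Fano manifold with $D\neq 0$ and log Fano pseudoindex $\geq 2$. By Theorem \ref{mainthm}, either $\Pic(X)\to\Pic(D)$ is injective, or $X\to Y$ is a $\pr^1$-bundle with $D$ a section and log Fano pseudoindex equal to one. The latter is excluded by the hypothesis $\iota\geq 2$, so $\Pic(X)\to\Pic(D)$ is injective. By Theorem \ref{fujino_thm}, $D$ is a connected snc Fano variety with snc Fano pseudoindex $\geq 2$, whose irreducible components are exactly the $D_i$. Applying \eqref{piccor2} to $\sX=D$ shows that $\Pic(D)\to\Pic(D_1)$ is injective, and composing with $\Pic(X)\to\Pic(D)$ yields the claim.

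For \eqref{piccor2}, the case $m=1$ is trivial since then $\sX=X_1$. For $m\geq 2$, take $\sL\in\Pic(\sX)$ with $\sL|_{X_1}\simeq\sO_{X_1}$. By the exact sequence of Proposition \ref{fanopicglue} the restriction $\eta\colon\Pic(\sX)\hookrightarrow\bigoplus_i\Pic(X_i)$ is injective, so it suffices to prove $\sL|_{X_i}\simeq\sO_{X_i}$ for every $i$. I would propagate triviality along the dual graph of $\sX$, which is connected because $\sX$ is connected: whenever $X_i\cap X_j\neq\emptyset$ and $\sL|_{X_i}\simeq\sO_{X_i}$, the relation $\mu(\eta(\sL))=0$ forces $\sL|_{X_j}|_{X_{ij}}\simeq\sO_{X_{ij}}$; since $(X_j,D_j)$ is a log Fano manifold with log Fano pseudoindex $\geq 2$ (Remark \ref{irr_rmk}) and $X_{ij}$ is an irreducible component of its conductor divisor $D_j$ (Definition \ref{cond_dfn}), an application of \eqref{piccor1}---here concerning a boundary $D_j$ with at most $m-1$ components, hence covered by the inductive hypothesis---gives that $\Pic(X_j)\to\Pic(X_{ij})$ is injective, so $\sL|_{X_j}\simeq\sO_{X_j}$. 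Starting from $X_1$ and traversing the connected dual graph, this forces $\sL|_{X_i}\simeq\sO_{X_i}$ for all $i$, whence $\sL\simeq\sO_{\sX}$.

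The step I expect to require the most care is keeping the mutual recursion between \eqref{piccor1} and \eqref{piccor2} acyclic: \eqref{piccor2} for $m$ components invokes \eqref{piccor1} only for pairs $(X_j,D_j)$ whose boundary $D_j=\sum_{i\neq j}X_{ij}$ has at most $m-1$ components, and that instance of \eqref{piccor1} in turn invokes \eqref{piccor2} only for the snc Fano variety $D_j$, again with at most $m-1$ components; thus, proving \eqref{piccor2} before \eqref{piccor1} within each stage, the induction on $m$ closes the loop. The other point demanding attention is the propagation argument itself: I must verify that the dual graph of $\sX$ is connected and that at each edge the gluing condition $\mu\circ\eta=0$ genuinely transports triviality from one component to its neighbour, which is precisely where the injectivity of $\Pic(X_j)\to\Pic(X_{ij})$ furnished by \eqref{piccor1} is indispensable.
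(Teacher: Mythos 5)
Your overall route coincides with the paper's: exclude case (2) of Theorem \ref{mainthm} using $\iota\geq 2$, view $D$ as an snc Fano variety of pseudoindex $\geq 2$ via Theorem \ref{fujino_thm}, use Remark \ref{irr_rmk} to see each component with its conductor as a log Fano manifold of pseudoindex $\geq 2$, and glue trivializations across components by the exact sequence of Proposition \ref{fanopicglue}. The genuine gap is in your choice of induction variable. Your inductive step for \eqref{piccor2} asserts that the conductor divisor $D_j=\sum_{i\neq j}X_{ij}$ has at most $m-1$ irreducible components, with $X_{ij}$ an irreducible component of $D_j$. But Definition \ref{cond_dfn} only guarantees that $X_{ij}$ is a \emph{smooth} divisor in $X_j$; a smooth divisor may be disconnected, and nothing quoted in this paper rules that out (irreducibility of the pairwise intersections is essentially the nontrivial fact that components of an snc Fano variety meet along connected loci, which is not among the results available here). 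If some $X_{ij}$ is reducible, then $D_j$ can have more than $m-1$ irreducible components, so your appeal to \eqref{piccor1} ``covered by the inductive hypothesis'' would concern a stage $\geq m$ that has not yet been established, and the induction on $m$ is not well-founded. (The claim that $X_{ij}$ itself is a component of $D_j$ is a minor cousin of the same issue and is harmless --- just restrict further to an irreducible component of $X_{ij}$; it is the component count that actually breaks the argument.)

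The repair is to induct on dimension rather than on the number of components, and your own mutual recursion already supports this: the passage from \eqref{piccor1} to \eqref{piccor2} replaces $(X,D)$ by $\sX=D$, which has dimension one less, while the passage from \eqref{piccor2} to \eqref{piccor1} (from $\sX$ to a component $(X_j,D_j)$) preserves dimension. Hence the scheme ``prove \eqref{piccor1} in dimension $d$ using \eqref{piccor2} in dimension $d-1$, then \eqref{piccor2} in dimension $d$ using \eqref{piccor1} in dimension $d$'' is well-founded with no component counting, with the base case $\dim X\leq 2$ settled as in the paper by the classification of two-dimensional log Fano manifolds. This is exactly how the paper organizes its proof: \eqref{piccor1} is proved by induction on $\dim X$, with the gluing argument inlined, and \eqref{piccor2} is deduced afterwards. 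Once this change is made, your propagation of triviality along the connected dual graph does go through, and is in fact slightly more careful than the paper's own argument, which trivializes the line bundle on every $D_i$ directly through $D_{1i}$ and thus implicitly assumes that each component meets $D_1$.
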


\begin{proof}
\eqref{piccor1}
We prove by induction on the dimension of $X$. 
If $\dim X\leq 2$, then the assertion is trivial by \cite[Proposition 4.1]{logfano}; 
we have $X\simeq\pr^2$ and $D$ is a hyperplane under the isomorphism. 

We can assume that the assertion holds for any log Fano manifold $(X', D')$ 
with $\dim X'=\dim X-1$. 
If $D$ is irreducible, then the assertion holds by 
Theorem \ref{mainthm} \eqref{mainthm1}. 
Let the irreducible decomposition $D=\sum_{i=1}^mD_i$ 
and let $D_{ij}:=D_i\cap D_j$ for any $i\neq j$; we can assume $m\geq 2$. 
We assume that an invertible sheaf $\sH$ on $X$ satisfies that 
$\sH|_{D_1}\simeq\sO_{D_1}$. It is enough to show that $\sH\simeq\sO_X$. 
We note that $(D_i, \sum_{j\neq i}D_{ij})$ is a log Fano manifold with the log Fano 
pseudoindex $\geq 2$ for any $1\leq i\leq m$. Hence the restriction homomorphism 
$\Pic(D_i)\rightarrow\Pic(D_{1i})$ is injective for any $2\leq i\leq m$ 
by the induction step. 
Thus $\sH|_{D_i}\simeq\sO_{D_i}$ for any $1\leq i\leq m$ since 
$(\sH|_{D_i})|_{D_{1i}}=(\sH|_{D_1})|_{D_{1i}}\simeq\sO_{D_{1i}}$ and the injectivity of the 
homomorphism $\Pic(D_i)\rightarrow\Pic(D_{1i})$ for $2\leq i\leq m$. 
Therefore $\sH|_D\simeq\sO_D$ by Proposition \ref{fanopicglue}; 
we remark that $D$ is an snc Fano variety. 
As a consequence, $\sH\simeq\sO_X$ holds by 
Theorem \ref{mainthm} \eqref{mainthm1}. 

\eqref{piccor2}
Let the irreducible decomposition $\sX=\bigcup_{i=1}^mX_i$ and let $X_{ij}:=X_i\cap X_j$ 
for any $i\neq j$; we can assume that $m\geq 2$. 
We assume that an invertible sheaf $\sL$ on $\sX$ satisfies that 
$\sL|_{X_1}\simeq\sO_{X_1}$. It is enough to show that $\sL\simeq\sO_{\sX}$. 
We note that $(X_i, \sum_{j\neq i}X_{ij})$ is a log Fano manifold with the log Fano 
pseudoindex $\geq 2$. Thus the restriction homomorphism 
$\Pic(X_i)\rightarrow\Pic(X_{1i})$ is injective for any $2\leq i\leq m$ by 
\eqref{piccor1}. We deduce that $\sL|_{X_i}\simeq\sO_{X_i}$
since $(\sL|_{X_i})|_{X_{1i}}=(\sL|_{X_1})|_{X_{1i}}\simeq\sO_{X_{1i}}$ and the injectivity 
of the homomorphism 
$\Pic(X_i)\rightarrow\Pic(X_{1i})$ for any $2\leq i\leq m$. Therefore we have 
$\sL\simeq\sO_{\sX}$ by Proposition \ref{fanopicglue}. 
\end{proof}

We can also show that the boundedness of Picard number for $n$-dimensional log Fano 
manifolds.

\begin{corollary}\label{bound_cor}
For any $n\in\Z_{>0}$, there exists $p(n)\in\Z_{>0}$ that satisfies the following 
conditions.
\begin{enumerate}
\renewcommand{\theenumi}{\arabic{enumi}}
\renewcommand{\labelenumi}{(\theenumi)}
\item\label{bound_cor1}
For any $n$-dimensional log Fano manifold $(X, D)$, the Picard number $\rho(X)$ of $X$ 
satisfies that $\rho(X)\leq p(n)$.
\item\label{bound_cor2}
For any $n$-dimensional snc Fano variety $\sX$, the Picard number $\rho(\sX)$ 
satisfies that $\rho(\sX)\leq p(n)$.
\end{enumerate}
\end{corollary}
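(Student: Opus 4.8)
The plan is to prove the two statements simultaneously by induction on $n$, establishing part \eqref{bound_cor1} first and then deducing part \eqref{bound_cor2} at the same level. Write $q(n)$ for the supremum of $\rho(X)$ over all $n$-dimensional log Fano manifolds $(X,D)$, and $p(n)$ for the supremum of $\rho(\sX)$ over all $n$-dimensional snc Fano varieties; the goal is that both are finite, after which one takes the bound in the statement to be any integer dominating $p(n)$ and $q(n)$. The case $n=1$ is immediate, so I would assume $n\ge 2$ and that $p(n-1)$ and $q(n-1)$ are already finite.

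For part \eqref{bound_cor1} I would split according to Theorem \ref{mainthm}. If $D=0$, then $X$ is a Fano manifold and $\rho(X)$ is bounded by the classical boundedness of Fano manifolds of a fixed dimension. If $D\ne 0$ and alternative \eqref{mainthm1} holds, then $\Pic(X)\hookrightarrow\Pic(D)$ is injective, so $\rho(X)\le\rho(D)$; here $D$ is an $(n-1)$-dimensional snc Fano variety by Theorem \ref{fujino_thm}, whence $\rho(X)\le p(n-1)$. If instead alternative \eqref{mainthm2} holds, then $\pi\colon X\to Y$ is a $\pr^1$-bundle over an $(n-1)$-dimensional Fano manifold $Y$ with $\rho(X)=\rho(Y)+1$, again bounded by the boundedness of Fano manifolds. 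Hence $q(n)$ is finite.

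For part \eqref{bound_cor2}, write $\sX=\bigcup_{i=1}^m X_i$ with double loci $X_{ij}$. Since each $\Pic(X_i)$ is torsion free by Lemma \ref{tor_free}, the exact sequence of Proposition \ref{fanopicglue} shows that the restriction $\Pic(\sX)\hookrightarrow\bigoplus_i\Pic(X_i)$ is an injection of torsion-free groups, so
\[
\rho(\sX)\le\sum_{i=1}^m\rho(X_i)\le m\cdot q(n),
\]
because each $(X_i,D_i)$ is an $n$-dimensional log Fano manifold with $D_i$ its conductor divisor (Definition \ref{cond_dfn}), and part \eqref{bound_cor1}, already secured at level $n$, applies to $X_i$. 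It therefore suffices to bound the number of components $m$. (I note that under the extra assumption that the snc Fano pseudoindex is $\ge 2$, Corollary \ref{piccor}\eqref{piccor2} gives $\Pic(\sX)\hookrightarrow\Pic(X_1)$ directly, so $\rho(\sX)\le\rho(X_1)\le q(n)$ with no control of $m$ needed; it is precisely the absence of such injectivity in the general case that forces one to bound the number of components.)

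To bound $m$, I would use the additivity of the top self-intersection of the ample line bundle $\omega_{\sX}^\vee$ over the irreducible components, together with the adjunction $\omega_{\sX}|_{X_i}\cong\omega_{X_i}(D_i)$:
\[
(\omega_{\sX}^\vee)^n=\sum_{i=1}^m\bigl(-(K_{X_i}+D_i)\bigr)^n.
\]
Each summand is the intersection number of an ample Cartier divisor on the smooth $n$-fold $X_i$, hence a positive integer, so $m\le(\omega_{\sX}^\vee)^n$. The remaining and main obstacle is a uniform upper bound on the anticanonical degree $(\omega_{\sX}^\vee)^n$ — equivalently, on the log-anticanonical degrees $(-(K_{X_i}+D_i))^n$ of the $n$-dimensional log Fano manifolds arising as components. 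I would obtain this from the boundedness of $n$-dimensional Fano-type varieties, applied for instance to the $\varepsilon$-log canonical log Fano pairs $(X_i,(1-\varepsilon)D_i)$, which is the one substantial external input; granting it, $m$ is bounded and hence $\rho(\sX)\le p(n)$, closing the induction. I expect this degree bound to be the crux: the structural results of this paper control $\rho$ of a single component and the gluing sequence controls how the components fit together, but neither limits \emph{how many} components may occur, and it is exactly the ampleness of $\omega_{\sX}^\vee$ — made quantitative through the displayed degree identity — that must be exploited to cap their number.
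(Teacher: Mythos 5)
Your argument for part \eqref{bound_cor1} is correct and is essentially the paper's: split according to Theorem \ref{mainthm}, use Theorem \ref{fujino_thm} to view $D$ as an $(n-1)$-dimensional snc Fano variety, and invoke Koll\'ar--Miyaoka--Mori boundedness \cite{KMM} for the Fano cases. The gap is in part \eqref{bound_cor2}, at exactly the point you flag as the crux: the uniform bound on $(\omega_{\sX}^\vee)^n$ that your component-counting argument needs does not exist, because the log-anticanonical degree of $n$-dimensional log Fano manifolds is unbounded. The paper's own Example \ref{lm} shows this: for $\rho=2$, $r=2$ one has $X=\pr_{\pr^1}(\sO^{\oplus 2}\oplus\sO(m))$ and $D=\pr_{\pr^1}(\sO^{\oplus 2})\simeq\pr^1\times\pr^1$, and the formulas given there yield $\sO_X(-(K_X+D))\simeq p^*\sO_{\pr^1}(2)\otimes\sO_\pr(2)$, whence $(-(K_X+D))^3=8(m+3)\to\infty$ as $m\to\infty$. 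Such pairs genuinely occur as components of snc Fano varieties: by adjunction $-(K_X+D)|_D\simeq -K_D$ is independent of $m$, so gluing two such pairs (with parameters $m$ and $m'$) along $D\simeq\pr^1\times\pr^1$ produces an snc Fano threefold of degree $8(m+m'+6)$ having only \emph{two} components. Thus $(\omega_{\sX}^\vee)^n$ cannot bound anything: the number of components stays small while the degree blows up. For the same reason your proposed BAB-type input is inapplicable: $-(K_X+(1-\varepsilon)D)$ is ample only for $\varepsilon$ small depending on the pair, so the pairs $(X,(1-\varepsilon)D)$ do not form a class with a fixed log canonical threshold, and no boundedness statement can hold for a class containing the unbounded family above.

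The correct (and the paper's) replacement for this step is combinatorial rather than numerical: an $n$-dimensional snc Fano variety has at most $n+1$ irreducible components, which is \cite[Corollary 2.21]{logfano}, a structural fact about snc Fano varieties established in the previous paper. With $m\leq n+1$ in hand, your inequality $\rho(\sX)\leq\sum_{i=1}^m\rho(X_i)$ (via Proposition \ref{fanopicglue}, exactly as in the paper) closes the induction with $p(n)=\max\{(n+1)(p(n-1)+1),\,q(n)\}$. Everything else in your proposal --- the simultaneous induction, part \eqref{bound_cor1}, and the reduction of part \eqref{bound_cor2} to bounding the number of components --- matches the paper; the single missing ingredient is this component bound, and the degree route you propose for it is unworkable.
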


\begin{proof}
For any snc Fano variety $\sX$, the rank of the Picard group $\rank(\Pic(\sX))$ is equal to the Picard number $\rho(\sX)$. It is easily shown 
since $H^1(\sX, \sO_\sX)=H^2(\sX, \sO_\sX)=0$ (see \cite[Corollary 2.26]{fujino}). 
We prove Corollary \ref{bound_cor} by induction on $n$. 

We know that if $(X, D)$ is a one-dimensional log Fano manifold, then $X\simeq\pr^1$. 
We also know that if $\sX$ is a one dimensional snc Fano variety, 
then $\sX$ is isomorphic to a reducible or smooth conic. These are proved 
in \cite[Example 2.10]{logfano}. Hence we can set $p(1):=2$. 

We assume that we can set $p(1),\dots,p(n-1)$. 
We know that there exists $q(n)\in\Z_{>0}$ 
such that $\rho(X)\leq q(n)$ for any $n$-dimensional Fano manifold $X$ 
by \cite[Theorem 0.2]{KMM}. We will show that we can set 
\[
p(n):=\max\{(n+1)(p(n-1)+1), q(n)\}.
\]
Let $(X, D)$ be an $n$-dimensional log Fano manifold. We can assume $D\neq 0$. 
We know that $\rho(X)\leq\rank(\Pic(D))+1$ by Theorem \ref{mainthm} and 
$D$ is an $(n-1)$-dimensional snc Fano variety by Theorem \ref{fujino_thm}. 
Therefore $\rho(X)\leq p(n-1)+1$. 

Let $\sX=\bigcup_{1\leq i\leq m}(X_i, D_i)$ be an $n$-dimensional snc Fano variety 
and the irreducible decomposition with the conductor. 
We know that $m\leq n+1$ by \cite[Corollary 2.21]{logfano} and each pair $(X_i, D_i)$
is an $n$-dimensional log Fano manifold by Remark \ref{irr_rmk}. Hence we have 
\[
\rho(\sX)\leq\sum_{i=1}^m\rho(X_i)\leq\max\{(n+1)(p(n-1)+1), q(n)\}
\]
by Proposition \ref{fanopicglue}. Therefore we have completed the proof of 
Corollary \ref{bound_cor}. 
\end{proof}

\section{Application to the Mukai conjecture}

In this section, we show that the Mukai conjecture 
(resp.\ the generalized Mukai conjecture) 
implies the log Mukai conjecture (resp.\ the log generalized Mukai conjecture), 
which stated in Section \ref{introsection}. 
First, we see the important example of $(r\rho-\rho+1)$-dimensional 
log Fano manifold with the log Fano inex $r$.

\begin{example}[Type $(\rho, r; m_1,\dots,m_{\rho-1})$]\label{lm}
Fix $r$, $\rho\geq 2$. Let $D\subset X$ be 
\begin{eqnarray*}
X & := & \pr_{(\pr^{r-1})^{\rho-1}}(\sO^{\oplus r}\oplus\sO(m_1,\dots,m_{\rho-1}))\\
D & := & \pr_{(\pr^{r-1})^{\rho-1}}(\sO^{\oplus r})
\end{eqnarray*}
with $m_1\dots,m_{\rho-1}\geq 0$, where the embedding $D\subset X$ is obtained by 
the canonical projection 
\[
\sO^{\oplus r}\oplus\sO(m_1,\dots,m_{\rho-1})
\rightarrow\sO^{\oplus r}.
\]
Then we have 
$\sO_X(-K_X)\simeq 
p^*\sO_{\pr_{(\pr^{r-1})^{\rho-1}}}(r-m_1,\dots,r-m_{\rho-1})\otimes\sO_\pr(r+1)$ 
and $\sO_X(D)\simeq 
p^*\sO_{\pr_{(\pr^{r-1})^{\rho-1}}}(-m_1,\dots,-m_{\rho-1})\otimes\sO_\pr(1)$, 
where $p\colon X\rightarrow\pr_{(\pr^{r-1})^{\rho-1}}$ is the projection 
and $\sO_\pr(1)$ is the tautological invertible sheaf on $X$ with respect to the 
projection $p$. 
It is easy to show that the invertible sheaf 
$p^*\sO_{\pr_{(\pr^{r-1})^{\rho-1}}}(1,\dots,1)\otimes\sO_\pr(1)$ is ample. 
Hence $(X, D)$ is an $(r\rho-\rho+1)$-dimensional log Fano manifold with the log Fano 
index $r$ and the log Fano pseudoindex $r$. 
\end{example}

We show from now on that the pair $(X, D)$ in Example \ref{lm} is the only example of 
$(r\rho-\rho+1)$-dimensional log Fano manifold with $D\neq 0$ and 
the log Fano index $r$ if we assume the low-dimensional Mukai conjecture 
and the low-dimensional log Mukai conjecture.

\begin{lemma}\label{technical}
Let $r$, $\rho\geq 2$. Consider a 
$\pr^r$-bundle $\pi\colon X\rightarrow(\pr^{r-1})^{\rho-1}$ 
and a divisor $D\subset X$ which satisfies that 
$D=(\pr^{r-1})^{\rho}$ and the restriction is the projection morphism 
$\pi|_D=p_{1,\dots,\rho-1}\colon D=(\pr^{r-1})^{\rho}\rightarrow(\pr^{r-1})^{\rho-1}$ 
and is a $\pr^{r-1}$-subbundle of $\pi$. 
If $(X, D)$ is a log Fano manifold of the log Fano pseudoindex $\iota\geq r$, 
then $(X, D)$ is isomorphic to the pair in 
Example \ref{lm} $($for some $m_1,\dots,m_{\rho-1}\in\Z_{\geq 0}$$)$. 
\end{lemma}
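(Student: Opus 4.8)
The plan is to analyze the $\pr^r$-bundle $\pi\colon X\rightarrow(\pr^{r-1})^{\rho-1}$ directly via its relative structure. Since $X$ is a $\pr^r$-bundle over the smooth base $S:=(\pr^{r-1})^{\rho-1}$, we may write $X\simeq\pr_S(\sE)$ for some rank-$(r+1)$ locally free sheaf $\sE$ on $S$, and the subbundle $D\hookrightarrow X$ corresponds to a locally free quotient $\sE\twoheadrightarrow\sF$ of rank $r$. The strategy is to identify $\sE$ and $\sF$ explicitly and to show, using the log Fano and pseudoindex hypotheses, that after twisting $\sE$ by a line bundle we may arrange $\sF\simeq\sO^{\oplus r}$ and $\sE\simeq\sO^{\oplus r}\oplus\sO(m_1,\dots,m_{\rho-1})$ with all $m_i\geq 0$.

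**Pinning down the bundle $\sF$.**
First I would compute $\sF$. The composite $D=(\pr^{r-1})^\rho\xrightarrow{\pi|_D} S$ is the projection $p_{1,\dots,\rho-1}$, so $D\simeq\pr_S(\sF)$ is itself the trivial $\pr^{r-1}$-bundle $S\times\pr^{r-1}$ over $S$. Hence $\sF$ is, up to twist by a line bundle on $S$, isomorphic to $\sO_S^{\oplus r}$; more precisely $\sF\simeq\sL\otimes\sO_S^{\oplus r}$ for some $\sL\in\Pic(S)=\Z^{\rho-1}$. Because $X\simeq\pr_S(\sE)$ is unchanged if we twist $\sE$ by a line bundle, I may normalize $\sE$ (replacing it by $\sE\otimes\sL^\vee$) so that $\sF\simeq\sO_S^{\oplus r}$ outright. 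This realizes $D$ as $\pr_S(\sO_S^{\oplus r})$, matching Example \ref{lm}.

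**Recovering $\sE$ from the extension.**
Next I would use the quotient $0\to\sK\to\sE\to\sO_S^{\oplus r}\to 0$, where $\sK$ is an invertible sheaf on $S$, say $\sK\simeq\sO_S(-m_1,\dots,-m_{\rho-1})$ for integers $m_i$. The normalization above still leaves the freedom to twist simultaneously, but the rank-$r$ trivial quotient fixes things enough that the isomorphism class of $X$ over $S$ is governed by $\sK$ and the extension class in $\Ext^1_S(\sO_S^{\oplus r},\sK)=H^1(S,\sK)^{\oplus r}$. Since $S=(\pr^{r-1})^{\rho-1}$ has $H^1(S,\sO_S(a_1,\dots,a_{\rho-1}))=0$ for all $(a_i)$ (each $\pr^{r-1}$ has $r\geq 2$, so no intermediate cohomology of line bundles appears in this range by the Künneth formula), the extension splits: $\sE\simeq\sO_S^{\oplus r}\oplus\sO_S(-m_1,\dots,-m_{\rho-1})$. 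Passing to the complementary twist $\sO_\pr(1)$ convention of Example \ref{lm} turns this into $\sO^{\oplus r}\oplus\sO(m_1,\dots,m_{\rho-1})$, so $(X,D)$ is isomorphic to the pair in Example \ref{lm} for these $m_i$.

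**The inequalities $m_i\geq 0$.**
The remaining point, and the one I expect to require the pseudoindex hypothesis $\iota\geq r$, is to force $m_i\geq 0$. With $\sE$ in the above form, I would use the formulas of Example \ref{lm} for $-K_X$ and for $\sO_X(D)$, compute $-(K_X+D)$, and test it against the minimal rational curves of $X$: namely a line in a fiber $\pr^r$ of $\pi$, and the rational curves mapping to a line in each factor $\pr^{r-1}$ of $S$ that lie in the section $D$ or in the ``negative'' section. The bundle being a log Fano manifold forces $-(K_X+D)$ to be ample, which already bounds the $m_i$; then evaluating $-(K_X+D)$ on the curve in the negative section of the $i$-th factor gives a value that equals $r-m_i$ (or a similar expression), and the condition $\iota\geq r$ forces $r-m_i\geq\iota\cdot(\text{something})$, ultimately yielding $m_i\geq 0$. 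The main obstacle is the bookkeeping of intersection numbers on $\pr_S(\sE)$: I must choose the right test curves so that positivity of $-(K_X+D)$ against them isolates each $m_i$ individually, and verify that the pseudoindex bound $\iota\geq r$ is exactly what rules out negative $m_i$ (whereas ampleness alone would only give a weaker lower bound).
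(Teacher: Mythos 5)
There is a genuine gap at your splitting step, and it occurs exactly in the case $r=2$ that the lemma must cover. Your claim that $H^1(S,\mathcal{K})=0$ for \emph{every} line bundle $\mathcal{K}$ on $S=(\pr^{r-1})^{\rho-1}$ is false for $r=2$: on $\pr^1$ the first cohomology is top, not intermediate, cohomology, so for example $H^1(\pr^1\times\pr^1,\sO(-2,0))\simeq H^1(\pr^1,\sO_{\pr^1}(-2))\otimes H^0(\pr^1,\sO_{\pr^1})\neq 0$ by K\"unneth. Hence the extension $0\to\mathcal{K}\to\sE\to\sO_S^{\oplus r}\to 0$ need not split a priori; the $\Ext^1$-vanishing you invoke holds precisely when the kernel has the right sign. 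But in your plan the sign of the $m_i$ is established only \emph{after} splitting: your test curves lie in the ``negative'' section, which corresponds to a line-bundle quotient of $\sE$ that exists only because $\sE$ splits. So for $r=2$ the argument is circular. (Two lesser points: the ``complementary twist'' at the end of your second step is not correct as stated, since twisting $\sO^{\oplus r}\oplus\sO(-m_1,\dots,-m_{\rho-1})$ by a line bundle also changes the trivial summand --- you should arrange the kernel to be $\sO(m_1,\dots,m_{\rho-1})$ and aim at $m_i\geq 0$ directly; and once splitting is known, ampleness of $-(K_X+D)$ alone already gives $m_i\geq 0$, because its degree on a line of the $i$-th factor inside the negative section is $r(1+m_i)$, so the pseudoindex hypothesis plays no role in your last step.)

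The paper runs your two steps in the opposite order, and that order is forced. It first defines the $m_i$ via the normal bundle $\sN_{D/X}\simeq\sO_{(\pr^{r-1})^{\rho}}(-m_1,\dots,-m_{\rho-1},1)$, which makes sense with no splitting assumption, and proves $m_i\geq 0$ geometrically: for $P$ a general fiber of the projection of $S$ omitting the $i$-th factor, the pair $(X_P,D_P)=(\pi^{-1}(P),\pi^{-1}(P)\cap D)$ is a log Fano manifold of dimension $2r-1$, Picard number $2$ and pseudoindex $\geq r=(\dim X_P+1)/2$, so the already-proven $\rho=2$ case of the log generalized Mukai conjecture (\cite[Theorem 4.3]{logfano}) identifies $(X_P,D_P)$ with the split bundle pair with twist $m\geq 0$, and restricting $\sN_{D/X}$ gives $m_i=m\geq 0$. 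Only then does it split the sequence $0\to\sO_S\to\pi_*\sO_X(D)\to(\pi|_D)_*\sN_{D/X}\to 0$, the relevant $H^1$ now vanishing \emph{because} $m_i\geq 0$, and conclude by \cite[Lemma 2.27]{logfano}. Note that this is also where the hypothesis $\iota\geq r$ genuinely enters --- i.e., exactly in the step your proposal skips. To repair your proof you would need an independent argument, valid for a possibly non-split $\sE$, that the kernel of $\sE\to\sO_S^{\oplus r}$ has nonnegative multidegree; that is in substance the paper's Claim \ref{technical_claim}.
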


\begin{proof}
We can write the normal sheaf as 
$\sN_{D/X}=\sO_{(\pr^{r-1})^{\rho}}(-m_1\dots,-m_{\rho-1}, 1)$ 
such that $m_1,\dots,m_{\rho-1}\in\Z$. 

\begin{claim}\label{technical_claim}
We have $m_1,\dots,m_{\rho-1}\geq 0$. 
\end{claim}

\begin{proof}[Proof of Claim \ref{technical_claim}]
It is enough to show $m_1\geq 0$. 
Let $P=\pr^{r-1}$ be a general fiber of the projection 
$p_{2,\dots,\rho-1}\colon(\pr^{r-1})^{\rho-1}\rightarrow(\pr^{r-1})^{\rho-2}$ 
and let $X_P:=\pi^{-1}(P)$, $\pi_P:=\pi|_{X_P}\colon X_P\rightarrow P$ 
and $D_P:=X_P\cap D$. 
Then $(X_P, D_P)$ is also a log Fano manifold of the 
log Fano pseudoindex $\geq\iota\geq r$, 
the morphism $\pi_P$ is a $\pr^r$-bundle, $D_P=\pr^{r-1}\times\pr^{r-1}$, 
the restriction morphism 
$(\pi_P)|_{D_P}\colon\pr^{r-1}\times\pr^{r-1}\rightarrow\pr^{r-1}$ is 
the first projection and a $\pr^{r-1}$-subbundle of $\pi_P$. 
We also note that $\sN_{D_P/X_P}\simeq\sO_{\pr^{r-1}\times\pr^{r-1}}(-m_1, 1)$. 
Hence $X_P\simeq\pr_{\pr^{r-1}}(\sO_{\pr^{r-1}}^{\oplus r}\oplus\sO_{\pr^{r-1}}(m))$ 
with $m\geq 0$ and $D_P\simeq\pr_{\pr^{r-1}}(\sO_{\pr^{r-1}}^{\oplus r})$, where 
the embedding is obtained by the canonical projection 
\[
\sO_{\pr^{r-1}}^{\oplus r}\oplus\sO_{\pr^{r-1}}(m)\rightarrow
\sO_{\pr^{r-1}}^{\oplus r}
\]
under the isomorphism, by \cite[Theorem 4.3]{logfano}. 
Thus we can show that $\sN_{D_P/X_P}\simeq\sO_{\pr^{r-1}\times\pr^{r-1}}(-m, 1)$. 
Therefore we have $m_1=m\geq 0$. 
\end{proof}

The exact sequence 
\[
0\rightarrow\sO_{(\pr^{r-1})^{\rho-1}}\rightarrow
\pi_*\sO_X(D)\rightarrow(\pi|_D)_*\sN_{D/X}\rightarrow 0
\]
in \cite[Lemma 2.27 (i)]{logfano} splits since we know that 
\[
(\pi|_D)_*\sN_{D/X}\simeq\sO_{(\pr^{r-1})^{\rho-1}}(-m_1,\dots,-m_{\rho-1})^{\oplus r}
\]
by \cite[Lemma 2.28 (1)]{logfano} and by Claim \ref{technical_claim}. 
Therefore we have proved Lemma \ref{technical} by \cite[Lemma 2.27 (ii)]{logfano}. 
\end{proof}

\begin{thm}\label{mukai_log}
Conjectures $\M^n_\rho$ and $\LM^n_\rho$ 
$($resp.\ Conjectures $\GM^n_\rho$ 
and $\LGM^n_\rho$$)$ imply Conjecture $\LM^{n+1}_\rho$ 
$($resp.\ Conjecture $\LGM^{n+1}_\rho$$)$ for any $n$, $\rho\geq 2$. 
\end{thm}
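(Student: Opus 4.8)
The plan is to deduce the $(n+1)$-dimensional statement from the $n$-dimensional hypotheses by first determining the boundary $D$ and then reconstructing $X$ as a projective bundle to which Lemma~\ref{technical} applies. I treat the pseudoindex version (assume $\GM^n_\rho$ and $\LGM^n_\rho$, prove $\LGM^{n+1}_\rho$); the index version is identical after replacing ``pseudoindex'' by ``index'' and $\GM,\LGM$ by $\M,\LM$. So let $(X,D)$ be an $(n+1)$-dimensional log Fano manifold with $D\neq 0$, $\rho(X)\ge\rho$ and log Fano pseudoindex $\iota\ge((n+1)+\rho-1)/\rho=(n+\rho)/\rho=:r$. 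Since $\rho\ge2$ gives $r>1$, we have $\iota\ge2$, so Corollary~\ref{piccor} applies: $\Pic(X)\to\Pic(D_1)$ is injective for every component $D_1\subset D$, whence $\rho(D_1)\ge\rho(X)\ge\rho$. By Theorem~\ref{fujino_thm}, $D$ is a connected $n$-dimensional snc Fano variety of snc Fano pseudoindex $\ge\iota$.

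First I would show that $D$ is irreducible. For a component $(D_1,\sum_{j\ne1}D_{1j})$ with nonzero conductor, Remark~\ref{irr_rmk} makes it an $n$-dimensional log Fano manifold of log Fano pseudoindex $\ge\iota\ge r>(n+\rho-1)/\rho$; as $\rho(D_1)\ge\rho$, the pair satisfies the hypotheses of $\LGM^n_\rho$, whose conclusion would force its pseudoindex to equal $(n+\rho-1)/\rho$, a contradiction. Hence all conductors vanish and, $D$ being connected, $D=D_1$ is a smooth Fano $n$-fold. Its Fano pseudoindex is $\ge\iota\ge r=(n+\rho)/\rho$ and $\rho(D)\ge\rho$, so $\GM^n_\rho$ gives $\rho(D)=\rho$, Fano pseudoindex $r$, and $D\simeq(\pr^{r-1})^\rho$. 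Thus $\rho\le\rho(X)\le\rho(D)=\rho$ forces $\rho(X)=\rho$, and since the snc Fano pseudoindex of $D$ now equals $r$ and is $\ge\iota\ge r$, the log Fano pseudoindex of $(X,D)$ is exactly $\iota=r$. In particular $\dim X=(r-1)\rho+1$.

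The core step is to realize $X$ as a $\pr^r$-bundle over $(\pr^{r-1})^{\rho-1}$. I would run the $(-D)$-MMP of Notation~\ref{logfano_mmp}, all of whose rays $R$ satisfy $(D\cdot R)>0$. By Proposition~\ref{fund_prop}\eqref{fund_prop2} each $\pi_j|_{D^j}$ is an algebraic fiber space, and for the first step $D$ carries a curve in $R^0$ (by \eqref{fund_prop3} when $\pi_0$ is birational, and because \eqref{fund_prop5} forces fibers of dimension $\ge 2$ meeting $D$ when $\pi_0$ is of fiber type, as $\iota\ge2$). Since every algebraic fiber space out of $D=(\pr^{r-1})^\rho$ is a projection onto a subproduct and the contracted face lies in the single ray $R^0$, the map $\pi_0|_D$ forgets exactly one factor, so $R^0=[\ell_\rho]$ for a line $\ell_\rho$ in that factor. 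The curves of $R^0$ then sweep out the divisor $D$, which with $(D\cdot R^0)>0$ rules out $\pi_0$ being divisorial (else $D$ is the contracted exceptional divisor and $(D\cdot R^0)<0$) or small; hence $\pi_0$ is the terminating fiber-type contraction and $k=0$. Writing $\pi:=\pi_0\colon X\to Y$, we get $\pi|_D\colon(\pr^{r-1})^\rho\to(\pr^{r-1})^{\rho-1}$ equal to the projection, and since $D$ meets every fiber, $Y=\pi(D)\simeq(\pr^{r-1})^{\rho-1}$ with general fiber $F$ of dimension $r$.

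To identify $F$, I would note that $(F,D\cap F)$ is an $r$-dimensional log Fano manifold with $D\cap F\simeq\pr^{r-1}$ and log Fano pseudoindex $\ge r$; Corollary~\ref{piccor} gives $\rho(F)=1$, and the extremal case of the classification of log Fano manifolds whose log Fano pseudoindex equals the dimension (cf.\ \cite{logfano}) forces $F\simeq\pr^r$ with $D\cap F$ a hyperplane, so $(D\cdot\ell_\rho)=1$ and $\pi$ has length $r+1$. Being an elementary fiber-type contraction of maximal length onto the smooth base $(\pr^{r-1})^{\rho-1}$ with general fiber $\pr^r$, $\pi$ is a $\pr^r$-bundle and $D$ a $\pr^{r-1}$-subbundle restricting to the projection; Lemma~\ref{technical} then identifies $(X,D)$ with Type $(\rho,r;m_1,\dots,m_{\rho-1})$, $m_i\ge0$, which is the assertion of $\LGM^{n+1}_\rho$. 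I expect the main obstacle to be precisely this last reconstruction — upgrading the elementary contraction $\pi$ to a genuine projective bundle (equidimensionality and fiberwise triviality) — which is the higher-dimensional analogue of the $\pr^1$-bundle analysis in Proposition~\ref{fund_prop}\eqref{fund_prop4} and relies on the fiberwise classification above together with a maximal-length bundle criterion in the spirit of \cite[Lemma~2.12]{fujita}.
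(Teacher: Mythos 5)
Your proposal follows the same route as the paper --- irreducibility of $D$ via Conjecture $\LGM^n_\rho$, the identification $D\simeq(\pr^{r-1})^\rho$ via Conjecture $\GM^n_\rho$, the $(-D)$-MMP argument showing $k=0$ and that $\pi_0|_D$ is the projection onto $(\pr^{r-1})^{\rho-1}$, and Lemma~\ref{technical} to conclude --- and those parts are essentially sound. But the step you yourself flag as ``the main obstacle,'' upgrading the elementary fiber-type contraction $\pi_0$ to a $\pr^r$-bundle, is a genuine gap rather than a routine citation. Any Fujita-type bundle criterion requires \emph{all} fibers of $\pi_0$ to have dimension $r$; your data (general fiber $\pr^r$, length $r+1$, smooth base) do not exclude jumping fibers, because Wi\'sniewski's inequality bounds fiber dimensions only from \emph{below}. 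The fiberwise classification $(F, D\cap F)\simeq(\pr^r, H)$ concerns the general fiber only, so it yields the length and $(D\cdot C)=1$ but says nothing about special fibers, which is exactly where equidimensionality can fail for fiber-type contractions in general.

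The paper closes this gap with a short argument for which you already have every ingredient on the table. Since $\pi_0|_D$ is the projection $(\pr^{r-1})^\rho\rightarrow(\pr^{r-1})^{\rho-1}$, \emph{every} fiber of $\pi_0|_D$ has dimension exactly $r-1$; since $D$ is Cartier, $(D\cdot R^0)>0$ and $\pi_0^{-1}(y)\cap D=(\pi_0|_D)^{-1}(y)$, every fiber of $\pi_0$ satisfies $\dim\pi_0^{-1}(y)\leq\dim\bigl(\pi_0^{-1}(y)\cap D\bigr)+1=r$. Combining this with Wi\'sniewski's inequality $\dim\pi_0^{-1}(y)\geq(-K_X\cdot C)-1=(-(K_X+D)\cdot C)+(D\cdot C)-1\geq r$, for a minimal rational curve $C$ of $R^0$, forces equality throughout: all fibers have dimension $r$, $(-K_X\cdot C)=r+1$ and $(D\cdot C)=1$ in one stroke, after which the bundle criterion (and the $\pr^{r-1}$-subbundle statement for $D$) does apply and Lemma~\ref{technical} finishes. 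So the fix is a few lines of the divisor-cuts-the-fiber argument, not a new theorem; note it also makes your detour through the classification of the general fiber unnecessary. One further minor point: your claim that the contracted face of $\overline{\NE}(D)$ lying in the single ray $R^0$ forces $\pi_0|_D$ to forget exactly one factor tacitly uses injectivity of $\NC(D)\rightarrow\NC(X)$; this does hold here (the map is surjective by Corollary~\ref{piccor} and duality, and $\rho(D)=\rho(X)=\rho$), but the paper sidesteps the issue by instead using $\rho(Y^0)=\rho(X)-1=\rho-1$ together with surjectivity of $\pi_0|_D\colon D\rightarrow Y^0$ from Proposition~\ref{fund_prop}~\eqref{fund_prop1}.
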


\begin{proof}
We only prove that Conjectures $\GM^n_\rho$ and $\LGM^n_\rho$
imply Conjecture $\LGM^{n+1}_\rho$; 
the proof of the other assertion is essentially same. 

Let $(X, D)$ be an $(n+1)$-dimensional log Fano manifold with the log Fano 
pseusoindex $\iota$ and $D\neq 0$ which satisfies that 
$\rho(X)\geq\rho$ and $\iota\geq (n+\rho)/\rho$, 
where $n$, $\rho\geq 2$. 
Let $(D_1, E_1)\subset D$ be an arbitrary irreducible component of $D$ with 
the conductor divisor. Then $(D_1, E_1)$ is an $n$-dimensional log Fano manifold 
with the log Fano pseudoindex $\geq\iota$. 
We know by Corollary \ref{piccor} \eqref{piccor1} that $\rho(D_1)\geq\rho(X)\geq\rho$ 
since $\iota\geq 2$ holds. We note that $\iota>(n+\rho-1)/\rho$. 
Thus $E_1=0$ (hence $D=D_1$ is irreducible) holds by Conjecture $\LGM^n_\rho$. 
Hence we can apply Conjecture $\GM^n_\rho$ for $D$; we have 
$\rho(X)=\rho$, $\iota=(n+\rho)/\rho$ and $D\simeq(\pr^{\iota-1})^\rho$. 
We can assume $D=(\pr^{\iota-1})^\rho$. 

We run a $(-D)$-MMP which is also a $(K_X+D)$-MMP as in Notation \ref{logfano_mmp}. 
The restriction morphism $\pi_0|_D\colon D\rightarrow\pi(D)$ to its image is an 
algebraic space and is not a finite morphism by Propositions \ref{fund_prop} 
\eqref{fund_prop2} and \eqref{fund_prop3}. Thus $\dim\pi(D)<n$ since 
$D\simeq(\pr^{\iota-1})^\rho$. Hence $k=0$, that is, $\pi_0\colon X\rightarrow Y^0$ 
is of fiber type contraction morphism by Proposition \ref{fund_prop} \eqref{fund_prop1}. 
We can assume that $Y^0=(\pr^{\iota-1})^{\rho-1}$ and the restriction morphism 
$\pi_0|_{D}\colon D\rightarrow Y^0$ is equal to the projection morphism 
$p_{1,\dots,\rho-1}\colon (\pr^{\iota-1})^\rho
\rightarrow(\pr^{\iota-1})^{\rho-1}$ 
since $\rho(Y^0)=\rho-1$ and $\pi_0(D)=Y^0$ holds 
by Proposition \ref{fund_prop} \eqref{fund_prop1}. 
Let $[C]\in R^0$ be a minimal rational curve of $R^0$ on $X$. 
Then we have 
\begin{eqnarray*}
\iota-1 & = & \dim(\pi_0^{-1}(y)\cap D)\geq\dim\pi_0^{-1}(y)-1\\
 & \geq & (-K_X\cdot C)-2=(-(K_X+D)\cdot C)+(D\cdot C)-2\geq\iota-1
\end{eqnarray*}
for any closed point $y\in Y^0$ by Wi\'sniewski's inequality 
\cite{wisn} (see also \cite[Theorem 2.29]{logfano}). 
Thus we have $(-K_X\cdot C)=\iota+1$, $(D\cdot C)=1$ and 
$\dim\pi_0^{-1}(y)=\iota$ for any closed point $y\in Y^0$. 
Therefore the morphism $\pi_0\colon X\rightarrow Y^0$ is a $\pr^\iota$-bundle 
and the restriction morphism $\pi_0|_D\colon D\rightarrow Y^0$ is a 
$\pr^{\iota-1}$-subbundle of $\pi_0$. Hence Conjecture $\LGM^{n+1}_\rho$ holds 
by Lemma \ref{technical}. 
\end{proof}

\begin{corollary}\label{lgm_cor}
Conjecture $\LGM^n_\rho$ is true if 
\begin{enumerate}
\renewcommand{\theenumi}{\arabic{enumi}}
\renewcommand{\labelenumi}{(\theenumi)}
\item\label{lgm_cor1}
$\rho\leq 3$, or
\item\label{lgm_cor2}
$n\leq 6$. 
\end{enumerate}
\end{corollary}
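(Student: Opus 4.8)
The final statement is Corollary \ref{lgm_cor}, which asserts that Conjecture $\LGM^n_\rho$ holds when $\rho\leq 3$ or when $n\leq 6$. The plan is to deduce this corollary from Theorem \ref{mukai_log} by combining the known unconditional cases of the generalized Mukai conjecture with an induction on the dimension $n$, using the base cases already recorded in the paper. The key point is that Theorem \ref{mukai_log} allows us to bootstrap from dimension $n$ to dimension $n+1$ provided we simultaneously have $\GM^n_\rho$ and $\LGM^n_\rho$ in hand.

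First I would treat part \eqref{lgm_cor1}, the case $\rho\leq 3$. By Remark \ref{intrormk}, Conjecture $\GM^n_\rho$ is known unconditionally for all $n$ whenever $\rho\leq 3$ (by \cite{NO}), so the generalized Mukai side is supplied for free. Thus I only need to produce $\LGM^n_\rho$ for all $n$. I would argue by induction on $n$: the base case is $\LGM^2_\rho$, which is established in \cite[Proposition 4.1]{logfano} (cited in Remark \ref{intrormk}) and holds for every $\rho$, hence in particular for $\rho\leq 3$. For the inductive step, assuming $\LGM^n_\rho$ with $n\geq 2$, I apply Theorem \ref{mukai_log} together with the unconditional $\GM^n_\rho$ (valid since $\rho\leq 3$) to conclude $\LGM^{n+1}_\rho$. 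This closes the induction and gives part \eqref{lgm_cor1}.

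Next I would treat part \eqref{lgm_cor2}, the case $n\leq 6$. Here $\rho$ is unrestricted, so I cannot invoke the $\rho\leq 3$ knowledge of $\GM^n_\rho$; instead I use the dimension bound from Remark \ref{intrormk}, namely that $\GM^n_\rho$ holds unconditionally for $n\leq 5$ (by \cite{ACO}). The strategy is again induction, but now organized by the relevant dimensions. The base case $\LGM^2_\rho$ holds for all $\rho$ by \cite[Proposition 4.1]{logfano}. To climb from dimension $n$ to dimension $n+1$ for $n+1\leq 6$, that is for $n\leq 5$, Theorem \ref{mukai_log} requires both $\GM^n_\rho$ and $\LGM^n_\rho$; the former is available for $n\leq 5$ by \cite{ACO}, and the latter is furnished by the inductive hypothesis. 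Running the induction up through $n=5$ yields $\LGM^6_\rho$, and together with the lower cases gives $\LGM^n_\rho$ for all $n\leq 6$.

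The main obstacle, or rather the point requiring care, is verifying that the ranges of $n$ for which the unconditional $\GM$ results apply are exactly aligned with the inductive ladder supplied by Theorem \ref{mukai_log}. The theorem consumes $\GM^n_\rho$ to produce $\LGM^{n+1}_\rho$, so to reach dimension $6$ I need $\GM^n_\rho$ only up to $n=5$, which is precisely the range \cite{ACO} provides; the mismatch would arise if I tried to reach dimension $7$, where $\GM^6_\rho$ is not yet known for arbitrary $\rho$. Thus the bound $n\leq 6$ in \eqref{lgm_cor2} is sharp relative to the current status of $\GM^n_\rho$, and the proof is essentially a careful bookkeeping of which unconditional inputs are available at each rung of the induction, with no genuinely difficult new step.
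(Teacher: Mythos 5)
Your proposal is correct and is exactly the argument the paper intends: the paper's proof consists of the single sentence that the corollary is immediate from Theorem \ref{mukai_log} and Remark \ref{intrormk}, and your induction on $n$ (base case $\LGM^2_\rho$ from \cite[Proposition 4.1]{logfano}, inductive step via Theorem \ref{mukai_log} fed by the unconditional cases $\rho\leq 3$ of \cite{NO} and $n\leq 5$ of \cite{ACO}) is precisely the bookkeeping that sentence leaves implicit.
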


\begin{proof}
It is an immediate corollary of Theorem \ref{mukai_log} and Remark \ref{intrormk}. 
\end{proof}

\medskip

\noindent K.\ Fujita

Research Institute for Mathematical Sciences (RIMS),
Kyoto University, Oiwake-cho, 

Kitashirakawa, Sakyo-ku, Kyoto 606-8502, Japan 

fujita@kurims.kyoto-u.ac.jp

\end{document}